\tikzset{
  information text/.style={rounded corners,fill=blue!10,inner sep=2ex},
  myarrow/.style={->,shorten >=1pt},
  mydiag/.style={auto, text height=1.5ex, text depth=0.25ex},
  mymatrix/.style={row sep=1.5cm, column sep=1.5cm, 
    matrix of math nodes, ampersand replacement=\& },
  mymatrix0/.style={matrix of math nodes, ampersand replacement=\& }
}
\theoremstyle{plain}
\theoremstyle{break}
\def\theoremheaderindent{\indent}
\newtheorem{theorem}{\theoremheaderindent Theorem}[section]
\newtheorem{proposition}[theorem]{\theoremheaderindent Proposition}
\newtheorem{corollary}[theorem]{\theoremheaderindent Corollary}
\newtheorem{lemma}[theorem]{\theoremheaderindent Lemma}
\newtheorem{definition}[theorem]{\theoremheaderindent Definition}
\newtheorem{example}[theorem]{\theoremheaderindent Example}
\newtheorem{remark}[theorem]{\theoremheaderindent Remark}
\newtheorem{algorithm}[theorem]{\theoremheaderindent Algorithm}
\newtheorem{complexity}[theorem]{\theoremheaderindent Complexity Analysis}
\newtheorem{algocomplexity}[theorem]{\theoremheaderindent Correction and Complexity Analysis}
\theoremstyle{nonumberplain}
\newtheorem{proof}{Proof}
\title{Computing Isogenies Between Abelian Varieties}
\author{David Lubicz$^{1,2}$, Damien Robert$^{3}$}
\date{\normalsize}
\DeclareMathOperator{\chaineadd}{\mathtt{chaine\_add}}
\DeclareMathOperator{\chaineaddmult}{\mathtt{chaine\_multadd}}
\DeclareMathOperator{\chainemultadd}{\mathtt{\chaineaddmult}}
\DeclareMathOperator{\chainemult}{\mathtt{chaine\_mult}}
\renewcommand{\theta}{\vartheta}
\renewcommand{\tilde}{\widetilde}
\newcommand{\Inv}{\mathfrak{I}}
\newcommand{\Scal}{\mathcal{S}}
\newcommand{\Sfrak}{\mathfrak{S}}
\newcommand{\pA}{\ensuremath{p_{A_k}}}
\newcommand{\pB}{\ensuremath{p_{B_k}}}
\newcommand{\Bell}{\ensuremath{\tilde{B_k}'}}
\newcommand{\elliso}{\ensuremath{[\ell]}}
\newcommand{\ellisotilde}{\ensuremath{\tilde{[\ell]}}}
\newcommand{\Aff}{\ensuremath{\mathbb{A}}}
\newcommand{\Atilde}{\ensuremath{\tilde{A}_k}}
\newcommand{\Btilde}{\ensuremath{\tilde{B}_k}}
\newcommand{\Ptilde}{\ensuremath{\tilde{P}}}
\newcommand{\Qtilde}{\ensuremath{\tilde{Q}}}
\newcommand{\Rtilde}{\ensuremath{\tilde{R}}}
\newcommand{\Ktilde}{\ensuremath{\tilde{K}}}
\newcommand{\thetatilde}{\ensuremath{\tilde{\theta}}}
\newcommand{\utilde}{\ensuremath{\tilde{u}}}
\newcommand{\xtilde}{\ensuremath{\tilde{x}}}
\newcommand{\ytilde}{\ensuremath{\tilde{y}}}
\newcommand{\xytilde}{\ensuremath{\tilde{x+y}}}
\newcommand{\xymtilde}{\ensuremath{\tilde{x-y}}}
\newcommand{\zeroA}{\ensuremath{\widetilde{0}_{A_k}}}
\newcommand{\zeroE}{\ensuremath{\widetilde{0}_{E_k}}}
\newcommand{\zeroB}{\ensuremath{\widetilde{0}_{B_k}}}
\newcommand{\zeroBl}{\ensuremath{\widetilde{0}_{\Bell}}}
\newcommand{\rhopol}{\rho_{\pol}}
\newcommand{\rhopolz}{\rho_{\pol_0}}
\newcommand{\rhopola}{\tilde{\rho}_{\pol}}
\newcommand{\rhopolaz}{\tilde{\rho}_{\pol_0}}
\newcommand{\overln}{\ensuremath{\overline{\ell n}}}
\newcommand{\overn}{\ensuremath{\overline{n}}}
\newcommand{\overl}{\ensuremath{\overline{\ell}}}
\newcommand{\overll}{\ensuremath{\overline{\ell^2 n}}}
\newcommand{\overtwo}{\ensuremath{\overline{2}}}
\newcommand{\deltab}{\delta'}
\newcommand{\deltao}{\delta_0}
\newcommand{\pitilde}{\tilde{\pi}}
\newcommand{\pidual}{\hat{\pi}}
\newcommand{\Thetastruct}[1]{\ensuremath{\Theta_{#1}}}
\newcommand{\ThetaA}{\ensuremath{\Theta_{A_k}}}
\newcommand{\ThetaB}{\ensuremath{\Theta_{B_k}}}
\newcommand{\ThetabarA}{\overline{\Theta}_{A_k}}
\newcommand{\Hstruct}[1]{\ensuremath{\mathcal{H}(#1)}}
\newcommand{\Zstruct}[1]{\ensuremath{Z({#1})}}
\newcommand{\dZstruct}[1]{\hat{Z}(#1)}
\newcommand{\Kstruct}[1]{\ensuremath{K({#1})}}
\newcommand{\Zln}{\Zstruct{\overln}}
\newcommand{\Zn}{\Zstruct{\overn}}
\newcommand{\Zl}{\Zstruct{\overl}}
\newcommand{\Zll}{\Zstruct{\overll}}
\newcommand{\Zdelta}{\Zstruct{\delta}}
\newcommand{\Zdeltab}{\Zstruct{\delta'}}
\newcommand{\Zdeltao}{\Zstruct{\delta_0}}
\newcommand{\dZln}{\dZstruct{\overln}}
\newcommand{\dZn}{\dZstruct{\overn}}
\newcommand{\dZl}{\dZstruct{\overl}}
\newcommand{\dZll}{\dZstruct{\overll}}
\newcommand{\dZdelta}{\dZstruct{\delta}}
\newcommand{\Hln}{\Hstruct{\overln}}
\newcommand{\Hll}{\Hstruct{\overll}}
\newcommand{\Hdelta}{\Hstruct{\delta}}
\newcommand{\Kln}{\Kstruct{\overln}}
\newcommand{\Kdelta}{\Kstruct{\delta}}
\newcommand{\Thetall}{\Thetastruct{B_k, \bpol}}
\newcommand{\Mstruct}[1]{\mathcal{M}_{#1}}
\newcommand{\Mln}{\Mstruct{\overln}}
\newcommand{\Mn}{\Mstruct{\overn}}
\newcommand{\Mdelta}{\Mstruct{\delta}}
\newcommand{\Ztwo}{\Zstruct{\overtwo}}
\newcommand{\dZtwo}{\dZstruct{\overtwo}}
\newcommand{\Az}{A^0}
\newcommand{\Kpol}{K(\pol)}
\newcommand{\KpolO}{K(\pol_0)}
\newcommand{\Gpol}{G(\pol)}
\newcommand{\GpolO}{G(\pol_0)}
\newcommand{\Kpola}{K_1(\pol)}
\newcommand{\Kpolb}{K_2(\pol)}
\newcommand{\epol}{e_{\pol}}
\newcommand{\edelta}{e_{c,\delta}}
\newcommand{\pullb}[1]{{#1}^{*}}
\newcommand{\proj}{\mathbb P}
\newcommand{\iso}{\stackrel{\sim}{\rightarrow}}
\newcommand{\pol}{\mathscr{L}}
\newcommand{\Mpol}{\mathscr{M}}
\newcommand{\ppol}{\mathscr{L}_0}
\newcommand{\bpol}{\mathscr{M}_0}
\newcommand{\mbb}{\mathbb}
\newcommand{\N}{\ensuremath{\mbb{N}}}
\newcommand{\Nstar}{\ensuremath{\mbb{N}^{\ast}}}
\newcommand{\Z}{\ensuremath{\mbb{Z}}}
\newcommand{\F}{\ensuremath{\mbb{F}}}
\newcommand{\Pvar}{\ensuremath{\mbb{P}}}
\newcommand{\Avar}{\ensuremath{\mbb{A}}}
\newcommand{\Ze}{\mathcal{Z}}
\renewcommand{\geq}{\ensuremath{\geqslant}}
\renewcommand*{\phi}{\varphi}
\renewcommand*{\epsilon}{\varepsilon}
\newcommand{\sub}{\ensuremath{\subset}}
\newcommand{\union}{\ensuremath{\bigcup}}
\newcommand{\prodtens}{\otimes}
\newcommand{\tens}{\otimes}
\DeclareMathOperator{\Aut}{Aut}
\newcommand{\kbar}{\overline{k}}
\let\myglossaryentry\newglossaryentry
\begin{document}

\maketitle

\begin{abstract} \noindent We describe an efficient algorithm for the
computation of isogenies between abelian varieties represented in the
coordinate system provided by algebraic theta functions. We explain how to
compute all the isogenies from an abelian variety whose kernel is
isomorphic to a given abstract group. We also describe an analog of Vélu's
formulas to compute an isogeny with a prescribed kernel. All our algorithms
rely in an essential manner on a generalization of the Riemann formulas.

In order to improve the efficiency of our algorithms, we introduce a
point compression algorithm that represents a point of level
$4\ell$ of a $g$ dimensional abelian variety using only $g(g+1)/2\cdot 4^g$
coordinates. We also give formulas to compute the Weil and commutator
pairing given input points in theta coordinates. All the algorithms
presented in this paper work in general for any abelian variety defined
over a field of odd characteristic. 
\end{abstract}

\pdfbookmark[1]{\contentsname}{toc}
\tableofcontents

\pdfbookmark[1]{Algorithms}{loalgo}
\section*{List of Algorithms}
\theoremlisttype{all}
\listtheorems{algorithm}
\clearpage
\pdfbookmark[1]{Notations}{glo}
\printglossary[title={List of Notations}]
\clearpage

\section{Introduction}
\label{sec@intro}

In this paper, we are interested in some algorithmic aspects of
isogeny computations between abelian varieties. Computing isogenies
between abelian varieties may be seen as different kind of computational
problems depending on the expected input and output of the algorithm.
These problems are:
\begin{itemize}
  \item Given an abelian variety $A_k$ and an abstract finite abelian group
    $K$ compute all the abelian varieties $B_k$ such that there exists an
    isogeny $A_k \rightarrow B_k$ whose kernel is isomorphic to $K$, and
    compute these isogenies. 
\item Given an abelian variety $A_k$ and a
finite subgroup $K$ of $A_k$, recover the quotient abelian variety
$B_k=A_k/ K$ as well as the isogeny $A_k \rightarrow B_k$.
\item Given two isogenous abelian varieties, $A_k$ and $B_k$, compute
  explicit equations for an isogeny map $A_k \rightarrow B_k$. 
\end{itemize}
Here, we are concerned with the first two problems. In the case that the abelian variety is an
elliptic curve, efficient algorithms have been described that solve all
the aforementioned problems \cite{lercier-algorithmique}. For higher-dimensional abelian varieties much less is known. Richelot's formulas
\cite{Mestre4,Mestre5} can be used to compute $(2,2)$-isogenies
between abelian varieties of dimension~$2$. The paper \cite{0806.2995v2} also introduced a method to compute certain isogenies
of degree $8$ between Jacobian of curves of genus three. In this
paper, we present an algorithm to compute $(\ell, \ldots ,
\ell)$-isogenies between abelian varieties of dimension $g$ for any
$\ell \geq 2$ and $g \geq 1$. Possible applications of our algorithm includes:
\begin{itemize}
  \item The transfer the discrete logarithm from an abelian variety to
    another abelian variety where the discrete logarithm is easy to
    solve \cite{1149.94329} 
  \item The computation of isogeny graph to obtain a description the
    endomorphism ring of an Abelian variety.
\end{itemize}

By Torelli's theorem there is a one on one correspondence between
principally polarized abelian varieties of dimension $2$ and 
Jacobians of genus $2$ hyperelliptic curves. Thus the modular space
of principally polarized abelian varieties of dimension~$2$ is
parametrized by
the three Igusa invariants, and one can define modular polynomials between
these invariants much in the same way as in the genus-one case
\cite{0804.1565v2}. However the height of these
modular polynomials explodes with the order, making their computations impractical: only
those are known~\cite{broker-modular}. In order to circumvent this
problem, in the article
\cite{0910.4668v1}, we have defined a modular correspondence between abelian
varieties in the moduli of marked abelian varieties. This moduli
space is well-suited for computating modular correspondences
since the associated
modular polynomials have their coefficients in $\{1,-1\}$, and there
is no explosion as before. 

In this paper, we explain how, given a solution to this modular
correspondence (provided for instance by the algorithm described in
\cite{0910.4668v1}), one can compute the associated isogeny. Once such a
modular point is obtained, the isogeny can be computed using only simple
addition formulas of algebraic theta functions, so in practice, the
computation of the isogeny takes much less time than the computation of a
point provided by the modular correspondence. Note that this is
similar to the genus-one case. 
For elliptic curves, the computation of a root of the modular
polynomial is not mandatory if the
points in the kernel of the isogeny are given, since this is the input taken by
Vélu's formulas. Here, we explain how to recover the equations
of an isogeny given the points of its kernel, yielding
a generalization of Vélu's formulas.

Our generalization introduces however a difference compared to the usual
genus-$1$ case. For elliptic curves, the modular
polynomial of order $\ell$ give the moduli space of $\ell$-isogenous elliptic curves.
In our generalized setting, the modular correspondence in the coordinate
system of theta null points gives $\ell^g$-isogenous abelian
varieties with a theta structure of different level. As a consequence, a point in this
modular space corresponds to an $\ell^g$-isogeny, together with a
symplectic structure of level $\ell$. Another method would be to describe a
modular correspondence between abelian varieties with theta structures of
the same level, see~\cite{0910.1848v1} for an example with $\ell=3$ and $g=2$.


The paper is organized as follow. In Section~\ref{sec@algo} we recall
Vélu's formulas and outline our algorithms. In Section~\ref{sec@modular},
we recall the definition of the modular correspondence given in
\cite{0910.4668v1}, and we study the relationship between isogenies and the
action of the theta group. We
recall the addition relations, which play a central role in this paper
in Section~\ref{sec@addition}. We then explain how to compute the
isogeny associated to a modular point in Section~\ref{sec@isogenies}.
If the
isogeny is given by theta functions of level $4 \ell$, it
requires $(4 \ell)^g$ coordinates. We give a point compression algorithm in
Section~\ref{subsec@compression}, showing how to express such an isogeny with
only $g(g+1)/2\cdot 4^g$ coordinates. In Section~\ref{sec@velu} we give a
full generalization of Vélu's formulas that constructs an isogenous modular
point with prescribed kernel. This algorithm is
more efficient
than the special Gr\"obner basis
algorithm from \cite{0910.4668v1}. There is a strong connection between isogenies and
pairings, and we use the above work to explain how one can compute the
commutator pairing and how it relates to the usual Weil pairing in
Section~\ref{sec@pairings}.

\section{Computing Isogenies}
\label{sec@algo}
In this section, we recall how one can compute isogenies between elliptic
curves. We then outline our algorithm to compute isogenies between abelian
varieties.

\subsection{Elliptic curves and Vélu's formulas}
\label{subsec@algo1}
Let $(E_k, \zeroE)$ be an elliptic curve given by a Weierstrass equation
$y^2=f(x)$ with $f$ a degree-$3$ monic polynomial. 
V\'elu's formulas rely on the intrinsic characterization of the coordinate
system $(x,y)$ giving the Weierstrass model of $E_k$ as:
\begin{equation}
      \begin{aligned}
        v_{\zeroE}(x)=-3 & \qquad v_{P}(x) \geq 0 \text{\quad if $P \ne \zeroE$} \\
        v_{\zeroE}(y)=-2 & \qquad v_{P}(y) \geq 0 \text{\quad if $P \ne \zeroE$}  \\
        y^2/x^3(\zeroE) = 1, & 
      \end{aligned}
  \label{eq@velu}
\end{equation}
where $v_Q$ denotes the valuation of the local ring of $E_k$ in the closed
point $Q$.

      \begin{theorem}[Vélu]
    Let $G \subset E_k(k)$ be a finite subgroup. Then $E_k/G$ is given by
   $Y^2=g(X)$ with $g$ a degree $3$ monic polynomial where
    \begin{gather*}
      X(P) = x(P) + \sum_{Q \in G\setminus \left\{ \zeroE \right\}} x(P+Q) - x(Q) \\
      Y(P) = y(P) + \sum_{Q \in G\setminus \left\{ \zeroE \right\}} y(P+Q) - y(Q)
    \end{gather*}
        \label{th@velu}
      \end{theorem}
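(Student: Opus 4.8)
The plan is the classical one: build the quotient elliptic curve $E_k/G$ together with its natural separable isogeny $\phi\colon E_k\to E_k/G$ of degree $|G|$, and identify $X,Y$ as pullbacks along $\phi$ of a Weierstrass pair on $E_k/G$. Since $G\subset E_k(k)$ is a finite set of rational points it is a reduced (hence étale) subgroup scheme, so $E_k/G$ exists as an elliptic curve with $\ker\phi=G$; write $O=\phi(\zeroE)$ for its origin. The extension $k(E_k)/k(E_k/G)$ is Galois with group $G$ acting through the translations $\tau_Q\colon P\mapsto P+Q$, a function on $E_k$ descends to $E_k/G$ exactly when it is $G$-invariant, and $\sum_{Q\in G}\tau_Q^*f$ is always $G$-invariant. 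The functions of the statement are $X=\sum_{Q\in G}\tau_Q^*x-\sum_{Q\ne\zeroE}x(Q)$ and $Y=\sum_{Q\in G}\tau_Q^*y-\sum_{Q\ne\zeroE}y(Q)$; I would first record that the constant in $Y$ vanishes, because $[-1]$ acts as $(x,y)\mapsto(x,-y)$ on $y^2=f(x)$, so the terms $y(Q)$ pair off under $Q\mapsto-Q$ (a $2$-torsion $Q\ne\zeroE$ has $y(Q)=0$), whence $Y=\sum_{Q\in G}\tau_Q^*y$. Reindexing the sums shows $X,Y$ are $G$-invariant, so they descend to $\bar X,\bar Y\in k(E_k/G)$ with $\phi^*\bar X=X$, $\phi^*\bar Y=Y$.

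Next comes the pole bookkeeping. By~(\ref{eq@velu}) the only pole of $x$ (resp. $y$) is at $\zeroE$, of order $2$ (resp. $3$), so $\tau_Q^*x$ (resp. $\tau_Q^*y$) has its only pole at $-Q$, of order $2$ (resp. $3$); as the points of $-G=G$ are distinct, leading terms cannot cancel, so $X$ and $Y$ have poles of exact order $2$ and $3$ at each point of $G$ and nowhere else. Pushing down, $\bar X\in L(2O)\setminus L(O)$ and $\bar Y\in L(3O)\setminus L(2O)$, with pole order exactly $2$ and $3$ at $O$. On the genus-$1$ curve $E_k/G$ Riemann--Roch gives $\dim L(6O)=6$, and $1,\bar X,\bar X^2,\bar X^3,\bar Y,\bar X\bar Y$ have the six distinct pole orders $0,2,4,6,3,5$, hence form a basis; expanding $\bar Y^2$ (pole order $6$) in it produces a relation
\begin{equation*}
  \bar Y^2=\lambda_3\bar X^3+\lambda_2\bar X^2+\lambda_1\bar X+\lambda_0+\mu_1\bar X\bar Y+\mu_0\bar Y,\qquad\lambda_3\ne 0.
\end{equation*}

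To finish I would clean up this relation. The involution $[-1]$ preserves $G$, hence descends to an involution $\sigma$ of $E_k/G$; from $[-1]^*x=x$, $[-1]^*y=-y$ and the vanishing of the constant in $Y$ one gets $\sigma^*\bar X=\bar X$ and $\sigma^*\bar Y=-\bar Y$, so applying $\sigma^*$ to the relation and subtracting forces $\mu_0=\mu_1=0$ (here I use odd characteristic). Evaluating $\bar Y^2/\bar X^3$ at $O$ then gives $\lambda_3$ on one side and, since near $\zeroE$ only the $Q=\zeroE$ summands of $X,Y$ are singular, $\lim_{\zeroE}y^2/x^3$ on the other, which is $1$ by the third line of~(\ref{eq@velu}); hence $\lambda_3=1$ and $\bar Y^2=g(\bar X)$ with $g$ monic of degree $3$. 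Finally $\bar X$ has a single double pole, so $[k(E_k/G):k(\bar X)]=2$, while $\bar Y$ is nonzero and $\sigma$-anti-invariant, hence $\notin k(\bar X)$; thus $k(\bar X,\bar Y)=k(E_k/G)$ and $(\bar X:\bar Y:1)$ identifies $E_k/G$ with the smooth model of the plane cubic $Y^2=g(X)$, with $O$ at infinity --- which is the assertion (smoothness of the cubic, i.e. separability of $g$, is automatic as its normalization $E_k/G$ has genus $1$).

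I expect the delicate points to be the normalization step forcing $\lambda_3=1$ and the parity argument killing $\mu_0,\mu_1$: these are exactly where the precise shape $y^2=f(x)$ of the Weierstrass model and all three conditions of~(\ref{eq@velu}) are used, and without them one would only obtain that $E_k/G$ is \emph{some} elliptic curve, not that it is the specific model $Y^2=g(X)$ attached to the displayed formulas for $X$ and $Y$.
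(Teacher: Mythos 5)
Your argument is correct and follows the same essential route as the paper's own proof, which is terse: $X,Y\in k(E_k)^G$ descend to $E_k/G$, and their descents satisfy the intrinsic characterization~(\ref{eq@velu}) of Weierstrass coordinates. What you add is precisely the content the paper hides behind that last clause: the Riemann--Roch expansion of $\bar Y^2$ in the basis $1,\bar X,\bar X^2,\bar X^3,\bar Y,\bar X\bar Y$ of $L(6O)$, the $[-1]$-involution argument that kills $\mu_0,\mu_1$ (for which your preliminary observation that $\sum_{Q\ne\zeroE}y(Q)=0$ is indeed needed, so that $\bar Y$ is genuinely $\sigma$-anti-invariant), and the evaluation of $y^2/x^3$ at $\zeroE$ that forces $g$ to be monic. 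Incidentally, the valuations in~(\ref{eq@velu}) as printed are swapped ($v_{\zeroE}(x)=-3$, $v_{\zeroE}(y)=-2$ should read $-2$ and $-3$ respectively); your pole bookkeeping uses the correct orders, which are also the only ones compatible with the final normalization $y^2/x^3(\zeroE)=1$.
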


    \begin{proof}
      Indeed, $X$ and $Y$ are in $k(E_k)^G$, and it is easily seen that they satisfy the
      relations~\eqref{eq@velu}.
    \end{proof}
A consequence of the that theorem is that,
given a finite subgroup $G$ of cardinality $\ell$ of an
elliptic curve $E_k$  an equation $y^2=f(x)$ with $f$ a degree
$3$ polynomial, it is
possible to compute the
Weierstrass equation of the quotient $E_k /G$ at the cost of $O(\ell)$ additions in $E_k$.

The modular curve $X_0(\ell)$ parametrizes the set of isomorphism
classes of elliptic curves together with a $\ell$-torsion subgroup.
For instance $X_0(1)$ is just the line of $j$-invariants.  Let
$\Phi_{\ell}(x,y) \in \Z[x,y]$ be the order $\ell$ modular polynomial.
It is well known that  the roots of  $\Phi_{\ell}(j(E_k), \cdot)$ give the
$j$-invariants of the elliptic curves $\ell$-isogenous to $E_k$. Since an
$\ell$-isogeny is given by a finite subgroup of $E_k$ of order $\ell$,
we see that $\Phi_{\ell}(x,y)$ cuts out a curve isomorphic to $X_0(\ell)$ in $X_0(1) \times X_0(1)$.

Given an elliptic curve $E_k$ with $j$-invariant $j_{E_k}$, the computation of
isogenies can be done in two steps: 
\begin{itemize}
  \item
    First, find the solutions of $\Phi_\ell(j_{E_k},X)$ where $\Phi_\ell(X,Y)$ is
    the order $\ell$ modular polynomial; then recover from a root
    $j_{E'_k}$ the
equation of the corresponding curve $E'_k$ which is $\ell$-isogenous to $E_k$.
\item Next,
  using Vélu's formulas,
compute the isogeny $E_k \to E'_k$. 
\end{itemize}
For some applications such as isogeny-graph computation, only the
first step is required, while for other applications it is necessary to obtain the explicit equations describing the isogeny. Note that the first
step is unnecessary if one already know the points in the kernel of the isogeny.

\subsection{Isogenies on abelian varieties}
\label{subsec@algo2}
Let $A_k$ be an abelian variety of
dimension~$g$ over a field~$k$ and denote by $K(A_k)$ its function
field. An isogeny is a finite surjective map of abelian varieties. In
the following we only consider separable isogenies i.e. isogenies
$\pi: A_k \to B_k$ such that the function field $K(A_k)$ is a finite
separable extension of $K(B_k)$. A separable isogeny is uniquely
determined by its kernel, which is a finite subgroup of $A_k(\kbar)$. In
that case, the cardinality of the kernel is the degree of the isogeny.
In the rest of this paper, by $\ell$-isogeny for $\ell >0$, we always mean a
$(\ell,\cdots,\ell)$-isogeny where $(\ell,\cdots,\ell) \in \Z^g$.

We have seen that it is possible to define a modular correspondence
between the Igusa invariants, parameterizing the set of dimension $2$
principally polarized abelian varieties, but the coefficients explosion of
the related modular polynomials makes it computationally inefficient.
In order to
mitigate this problem and obtain formulas suitable for general 
$g$-dimensional abelian varieties, we use the
moduli space of marked abelian varieties. 

Let $g \in \Nstar$ and let $n \in \N$ be such that $2 |n$. Let
$\overn=(n,n,\dots,n) \in \Z^g$, and $\Zn=\Z^g/n\Z^g$. We denote $\Mn$
the modular space of marked abelian varieties $(A_k,\pol,\ThetaA$)
where $\pol$ is a polarization and $\ThetaA$ is 
symmetric theta structure $\ThetaA$ of type $\Zn$
(see~\cite{MR34:4269}). The forgetting map $(A_k,\pol,\ThetaA) \mapsto
(A_k,\pol)$ is a finite map from $\Mn$ to the moduli space of abelian
varieties with a polarization of type $\Zn$.

\glsadd{glo@Mn}
We recall~\cite{MR36:2621} that if $4 |n$, then $\Mn$ is open in the projective variety described by
the following equations in $\Pvar(k(\Zn))$:
\begin{equation}
  \begin{gathered}
  \begin{multlined}[0.80\columnwidth]
\big(\sum_{t \in \Ztwo} \chi(t) a_{x+t} a_{x+t}\big).\big(\sum_{t \in \Ztwo} \chi(t)
a_{u+t} a_{u+t}\big) 
= \\
\big(\sum_{t \in \Ztwo} \chi(t) a_{z-x+t} a_{z-y+t}\big).\big(\sum_{t \in \Ztwo} \chi(t) a_{z-u+t} a_{z-v+t}\big)
  \end{multlined} \\
a_{x}=a_{-x} 
  \end{gathered}
  \label{eq@equations_Mn}
\end{equation}
for all $x,y,u,v \in \Zn$, such that $x+y+u+v=2z$ and all $\chi \in \dZtwo$.

In~\cite{0910.4668v1}, we have described a modular correspondence $\phi: \Mln \to \Mn
\times \Mn$ for $\ell \in \Nstar$, which can be seen as a generalization
of the modular correspondence $X_0(\ell) \to X_0(1) \times X_0(1)$ for
elliptic curves.
Let $p_1$ and $p_2$ be the corresponding projections $\Mn \times \Mn \to
\Mn$, and let $\phi_1=p_1 \circ \phi$, $\phi_2=p_2 \circ \phi$.
The map $\phi_1: \Mln \to \Mn$ is such that $(x,
\phi_1(x))$ are modular points corresponding to $\ell$-isogenous varieties
We recall that $\phi_1$ is defined by 
$\phi_1\left( (a_i)_{i \in \Zln} \right) = (a_i)_{i \in \Zn}$ where
$\Zn$ is identified as a subgroup of $\Zln$ by the map $x \mapsto \ell
x$. 

Suppose that we are given a modular point $(b_i)_{i \in \Zn}$ corresponding to the
marked abelian 
variety $(B_k, \pol_0, \ThetaB)$. If $B_k$ is the Jacobian variety of an hyperelliptic curve, one
may recover the associated modular point for $n=4$ via Thomae formulas~\cite{MR86b:14017}.

Suppose for now that $4 |n $ and that $\ell$ is prime to $n$.
Our algorithm works in two steps:
\begin{enumerate}
  \item \textbf{Modular computation} Compute a modular point
    $(a_i)_{i \in \Zln} \in \phi_1^{-1}\left( (b_i)_{i \in \Zn} \right)$.
    This can be done via the specialized Gr\"obner basis algorithm described
    in \cite{0910.4668v1}, but see also Section~\ref{sec@velu} for a more
    efficient method.
  \item \textbf{Vélu's like formulas} Use the addition formula in $B_k$ to
    compute the isogeny $\hat{\pi}: B_k \to A_k$ associated to the modular point
    solution. Here $(A_k, \pol, \ThetaA)$ is the marked abelian variety corresponding to 
    $(a_i)_{i \in \Zln}$. This step is described in
    Section~\ref{sec@isogenies}. 
\end{enumerate}

We can also compute an isogeny given by its kernel $K$ by using the results of Section~\ref{subsec@velu}
to construct the corresponding modular point $(a_i)_{i \in \Zln}$ from $K$.
We thus have a complete generalization of Vélu's formulas for higher
dimensional abelian varieties
since the reconstruction of the modular point $(a_i)_{i \in \Zln}$ from the
kernel $K$ only requires the addition formulas in $B_k$ (together with the
extraction of 
$\ell^{th}$-roots). In Section~\ref{subsec@ComputingModular} we
explain how to use this to speed up Step~1 of our algorithm (we call this
Step~1').

If the kernel of the isogeny is unknown, the most time-consuming part of our
algorithm is the computation of a maximal subgroup of rank $g$ of the
$\ell$-torsion, which means that currently, with $g=2$ we can go up to
$\ell=31$ relying on the current state-of-the-art implementation~\cite{gaudryrecord}.
In order to speed up Step~$2$, which requires $O(\ell^g)$ additions to be
performed in
$B_k$, and compute with a compact representation,
it is important to consider the smallest possible $n$.
If $n=2$, we cannot prove that the modular system to be solved in
Step~1 is of dimension~$0$. However Step~1', which is faster, does not
require a modular solution but only the kernel of the isogeny, so our
algorithm works with $n=2$ too. Note, however, that some care must be taken when
computing additions on $B_k$, since the algebraic theta functions only give an
embedding of the Kummer variety of $B_k$ for $n=2$.

For an actual implementation the case $n=2$ is critical (it allows
for a more compact representation of the points than $n=4$: we gain a
factor $2^g$, it allows for a faster addition chain,
see Section~\ref{subsubsec@compressed_addition}, but most importantly it reduces the
most consuming part of our algorithm, the computation of the points of
$\ell$-torsion, since there are half as much such points on the Kummer variety).
For each algorithm that we use, we give an explanation on how to adapt it
for the level~$2$ case: see Section~\ref{subsubsec@addition_level2} and the
end of Sections~\ref{subsec@isogeny}, \ref{subsec@velu},
\ref{subsec@ComputingModular} and~\ref{subsec@compute_commutator}.

The assumption that $n$ is prime to $\ell$ is not necessary either but there is one
important difference in this case. Suppose that we are given $B_k[\ell]$.
Since $B_k$ is given by a theta structure of level~$n$, we also have
$B_k[n]$. If $\ell$ is prime to $n$, this gives us $B_k[\ell n]$, and we can
use Step~1' to reconstitute a modular point of level $\ell n$. If $\ell$ is
not prime to $n$, we have to compute $B_k[\ell n]$ directly.  

It is also possible to compute more general types of isogenies via our algorithm. With
the notations of Section~\ref{sec@modular}, let $\delta_0=(\delta_1,
\ldots, \delta_g)$ be a sequence of integers such that $\delta_i |
\delta_{i+1}$, and let $(b_i)_{i \in \Zdeltao} \in \Mstruct{\delta_0}$ be a
modular point corresponding to an abelian variety $B_k$.  Let
$\delta'=(\ell_1, \ldots, \ell_g)$ (where $\ell_i | \ell_{i+1}$) and define
$\delta=(\delta_1 \ell_1, \ldots, \delta_g \ell_g)$. Let $(a_i)_{i \in
\Zdelta} \in \Mdelta$ be such that $\phi\left( (a_i)_{i \in \Zdelta}
\right) = (b_i)_{i \in \Zdeltao}$. The theta null point $(a_i)_{i \in \Zdelta}$ corresponds to
an abelian variety $A_k$, such that there is a
$(\ell_1,\cdots,\ell_g)$-isogeny $\pi: A_k \to B_k$, which can be computed
by the isogeny theorem~\cite{MR34:4269} (see
Section~\ref{subsec@isogeniethm}). The isogeny we compute in Step~2 is the contragredient isogeny $\pidual: B_k \to A_k$
of type $(\ell_g / \ell_1,\ell_g / \ell_2, \cdots, 1, \ell_g, \ell_g, \cdots, \ell_g)$. 
Using the modular correspondence $\phi$ to go back to a modular point of
level $\delta_0$ (see Section~\ref{subsec@modcorr}) gives an isogeny of
type $(\ell_g / \ell_1,\ell_g / \ell_2, \cdots, 1, \ell_1 \ell_g, \ell_2 \ell_g, \cdots, \ell_g \ell_g)$. 
For
the clarity of the exposition, we will stick to the case $\delta_0=\overn$
and $\delta=\overln$ and we leave to the reader the easy generalization.

Let us make some remarks on our algorithm. First note that to compute
$\ell$-isogenies, we start from a theta null point of level $n$ to get a
theta null point of level $\ell n$. We can then go back to a point of level
$n$ (see Section~\ref{subsec@modcorr}), but in this case we are computing
$\ell^2$-isogenies. A second remark is that all our computations are
geometric, not arithmetic, since the projective embedding given by theta
functions of level~$\ell n$ is not rational. A last remark is that since we
use different moduli spaces, our method is not a straight-up generalization
of the genus-$1$ case. In particular, computing a modular point solution
$(a_i)_{i \in \Zln}$ is the same as choosing an $\ell$-isogeny \textbf{and}
a theta structure of level~$\ell$, so there are many more modular solutions
than there is $\ell$-isogenies. Hence, as noted in the introduction, the
most efficient method in our cases is to compute the points of
$\ell$-torsion to reconstitute the modular point. 


\section{Modular correspondences and theta null points}
\label{sec@modular}
In this section, we recall some results of~\cite{0910.4668v1} and 
notations that we will use in the rest of the paper.
In Section~\ref{subsec@thetastruct} we recall the definition of a theta
structure and the associated theta functions~\cite{MR34:4269}. In
Section~\ref{subsec@isogeniethm} we recall the isogeny theorem, which give
a relations between the theta functions of two isogenous abelian varieties.
In Section~\ref{subsec@modcorr} we explain the modular
correspondence defined in~\cite{0910.4668v1}. In
Section~\ref{subsec@action} we study the connection between isogenies and the
action of the theta group.

\subsection{Theta structures}
\label{subsec@thetastruct}

\glsadd{glo@Ak}
\glsadd{glo@Kpol}
\glsadd{glo@Zn}
Let $A_k$ be a $g$ dimensional abelian variety over a field $k$. Let
$\pol$ be a degree-$d$ ample symmetric line bundle on $A_k$. We
suppose that $d$ is prime to the characteristic of $k$ or that $A_k$
is ordinary.  Denote by $K(\pol)$ the kernel of the isogeny
$\phi_{\pol} : A_k \rightarrow \hat{A}_k$, defined on geometric points
by $x \mapsto \tau_x^* \pol \otimes \pol^{-1}$ where $\tau_x$ is the
translation by $x$. Let $\delta=(\delta_1, \ldots, \delta_g)$ be the
sequence of integers satisfying $\delta_i | \delta_{i+1}$ such that, as 
group schemes $K(\pol) \simeq \bigoplus_{i=1}^g (\Z / \delta_i
\Z)^2_k$. We say that $\delta$ is the type of $\pol$. In the following
we let $\Zdelta = \bigoplus_{i=1}^g (\Z / \delta_i \Z)_k$, $\dZdelta$
be the Cartier dual of $\Zdelta$, and $\Kdelta = \Zdelta \times
\dZdelta$.

\glsadd{glo@Gpol}
\glsadd{glo@Hdelta}
\glsadd{glo@sKpola}
\glsadd{glo@ec}
Let $G(\pol)$ and $\Hdelta$ be respectively the theta group of $(A_k,
\pol)$ and the Heisenberg group of type $\delta$ \cite{MR34:4269}. In
this article, elements of $G(\pol)$ will be written as $(x,\psi_x)$
with $x \in K(\pol)$ and $\psi_x : \pol \rightarrow \tau^*_x \pol$ is
an isomorphism.  We know that $G(\pol)$ and $\Hdelta$ are central
extensions of $K(\pol)$ and $K(\delta)$ by $\mathbb{G}_m$. By definition, a theta
structure $\ThetaA$ on $(A_k,\pol)$ is an isomorphism of central
extensions from $\Hdelta$ to $G(\pol)$. We denote by $\epol$ the
commutator pairing \cite{MR34:4269} on $K(\pol)$ and by $\edelta$ the
canonical pairing on $\Zdelta \times \dZdelta$ (We often drop the indice
$\delta$ in $e_{\delta}$ when there is no risk of confusion). We remark
that a theta
structure $\ThetaA$ induces a symplectic isomorphism
$\ThetabarA$ from $(\Kdelta, \edelta)$ to $(\Kpol, \epol)$. We
denote by $\Kpol = \Kpola \times \Kpolb$ the decomposition into
maximal isotropic subspaces induced by $\ThetabarA$. The sections
$\Zdelta \rightarrow \Hdelta$ and $\dZdelta \rightarrow \Hdelta$
defined on geometric points by $x \mapsto (1,x,0)$ and $y \mapsto
(1,0,y)$ can be transported by the theta structure to obtain 
natural sections $s_{\Kpola}: \Kpola \to \Gpol$ and $s_{\Kpolb}: \Kpolb \to \Gpol$ of the canonical
projection $\kappa : G(\pol) \to \Kpol$. Recall \cite[pp.
291]{MR34:4269} that a level subgroup $\tilde{K}$ of $G(\pol)$ is a
subgroup such that $\tilde{K}$ is isomorphic to its image by $\kappa$.
We define the maximal level subgroups $\tilde{K}_1$ over $K_1(\pol)$
and $\tilde{K}_2$ over $K_2(\pol)$ as the image by $\ThetaA$ of
the subgroups $(1,x,0)_{x \in \Zdelta}$ and $(1,0,y)_{y \in \dZdelta}$
of $\Hdelta$. 

\glsadd{glo@thetai}
Let $V=\Gamma(A_k, \pol)$. The theta group $G(\pol)$ acts on $V$ by $v
\mapsto \psi_x^{-1}\tau^*_x(v)$ for $v \in V$ and $(x,\psi_x) \in
G(\pol)$. This action can be transported via $\ThetaA$ to an
action of $\Hdelta$ on $V$. It can be shown that there is a unique (up
to a scalar factors) basis $(\vartheta_{i})_{i \in \Zdelta}$ of $V$
such that this action is given by:
\begin{equation}
  (\alpha,i,j). \vartheta^{\ThetaA}_{h} =\alpha. \edelta(-i-h,j).
  \vartheta^{\ThetaA}_{h+i}.
\label{eq@actiontheta}
\end{equation}
(see \cite{MR36:2622,MR2062673}  for a connection between
algebraic theta functions and the classical, analytic theta functions.)
If there is no ambiguity, in this paper, we will sometimes drop the superscript
$\ThetaA$ in the notation $\theta_k^{\ThetaA}$.
We briefly recall the construction of this basis: let $\Az_k$ be the
quotient of $A_k$ by $K_2(\pol)$ and $\pi : A_k \rightarrow \Az_k$ be
the natural projection.  By Grothendieck descent theory, the
data of $\tilde{K}_2$ is equivalent to the data of a couple $(\pol_0,
\lambda)$ where $\pol_0$ is a degree-one ample line bundle on $\Az_k$
and $\lambda$ is an isomorphism $\lambda : \pi^*(\pol_0) \rightarrow
\pol$. Let $s_0$ be the unique global section of $\pol_0$ up to a
constant factor and let $s = \lambda(\pi^*(s_0))$. We have the
following proposition (see \cite{MR34:4269})
\begin{proposition}\label{prop@thetadef}
  For all $i \in \Zdelta$, let $(x_i, \psi_i)=
  \ThetaA((1,i,0))$.  We set $\vartheta^{\ThetaA}_i =
  (\psi_x^{-1}\tau_x^*(s))$.  
  The elements
  $(\vartheta^{\ThetaA}_i)_{i \in \Zdelta}$ form a basis of the
  global sections of $\pol$; it is uniquely determined (up to a
  multiplicative factor independent of $i$) by 
  $\ThetaA$.
\end{proposition}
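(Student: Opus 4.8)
The plan is to verify that each $\vartheta^{\ThetaA}_i$ really is a global section of $\pol$, then to show that the $\abs{\Zdelta}$ sections so obtained are eigenvectors for the action of the level subgroup $\tilde{K}_2$ attached to pairwise distinct characters --- whence linearly independent --- and finally to conclude that they form a basis by a dimension count; the uniqueness clause will follow by tracing through the (few) choices made in the construction.

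First I would record that $\pol_0$ has degree one on $\Az_k$, so $h^0(\Az_k,\pol_0)=1$ and the section $s_0$, hence $s=\lambda(\pi^*(s_0))$, is well defined up to a nonzero scalar. Writing $(x_i,\psi_i)=\ThetaA((1,i,0))$, the isomorphism $\psi_i\colon\pol\to\tau^*_{x_i}\pol$ makes $\vartheta^{\ThetaA}_i=\psi_i^{-1}(\tau^*_{x_i}(s))$ an element of $V=\Gamma(A_k,\pol)$; in the language of the $\Hdelta$-action on $V$ recalled above this is simply $\vartheta^{\ThetaA}_i=\ThetaA((1,i,0))\cdot s$. In particular $\vartheta^{\ThetaA}_0=s$, and each $\vartheta^{\ThetaA}_i$ is nonzero because $\ThetaA((1,i,0))$ acts invertibly on $V$.

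The heart of the argument is the behaviour under $\tilde{K}_2=\ThetaA(\{(1,0,j):j\in\dZdelta\})$. By Grothendieck descent the datum of the level subgroup $\tilde{K}_2$ corresponds to the pair $(\pol_0,\lambda)$ in such a way that sections of $\pol_0$ pull back exactly to the sections of $\pol$ fixed by the $\Kpolb$-linearization encoded by $\tilde{K}_2$; hence $\ThetaA((1,0,j))\cdot s=s$ for every $j$. Using that $\Hdelta\to\GL(V)$ is a group action through $\ThetaA$ together with the Heisenberg commutation relation (with the sign convention implicit in~\eqref{eq@actiontheta}), I would then move $(1,0,j)$ past $(1,i,0)$ and apply the previous line to obtain
\[
\ThetaA((1,0,j))\cdot\vartheta^{\ThetaA}_i=\edelta(-i,j)\,\vartheta^{\ThetaA}_i .
\]
Thus $\vartheta^{\ThetaA}_i$ spans a line on which $\tilde{K}_2\cong\dZdelta$ acts through the character $j\mapsto\edelta(-i,j)$; since $\edelta$ is a perfect pairing these characters are pairwise distinct as $i$ runs over $\Zdelta$, so the nonzero vectors $\vartheta^{\ThetaA}_i$ sit in distinct $\tilde{K}_2$-eigenspaces and are linearly independent. (Equivalently, $V$ decomposes under $\tilde{K}_2$ into $\abs{\Zdelta}$ eigenspaces, permuted simply transitively by $\tilde{K}_1$ and therefore all one-dimensional, with $\vartheta^{\ThetaA}_i$ spanning the $\edelta(-i,\cdot)$-eigenspace.)

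Finally, $\dim_k V=h^0(A_k,\pol)=\prod_{i=1}^g\delta_i=\abs{\Zdelta}$ by Riemann--Roch for the ample bundle $\pol$ of type $\delta$, so the $\abs{\Zdelta}$ independent sections $\vartheta^{\ThetaA}_i$ form a basis. For uniqueness, the construction used only: the level subgroup $\tilde{K}_2$, which is $\ThetaA(\{(1,0,y)\}_y)$ and so determined by $\ThetaA$; the descent data $(\pol_0,\lambda)$, canonically attached to $\tilde{K}_2$; and the section $s_0$, determined up to a scalar $\mu\in\kmult$. Scaling $s_0$ by $\mu$ scales every $\vartheta^{\ThetaA}_i$ by $\mu$, which is the asserted uniqueness up to a common multiplicative factor independent of $i$. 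I expect the only genuinely delicate step to be the descent-theoretic one --- making precise the correspondence $\tilde{K}_2\leftrightarrow(\pol_0,\lambda)$ and the $\tilde{K}_2$-invariance of $s$ (and, when $A_k$ is ordinary and $\delta$ a power of the characteristic, reading the eigenspace decomposition for the non-étale group scheme $\dZdelta$ scheme-theoretically); the commutation computation and the dimension count are routine.
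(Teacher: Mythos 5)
Your argument is correct and is essentially the standard one: the paper itself does not prove Proposition~\ref{prop@thetadef} but simply cites Mumford~\cite{MR34:4269}, and what you have written is a faithful reconstruction of Mumford's argument (eigenvector decomposition for the level subgroup $\tilde{K}_2$, distinctness of characters by perfectness of $\edelta$, dimension count via Riemann--Roch, and tracing uniqueness back to the scalar ambiguity in $s_0$). The one step you flag as delicate, the equivalence $\tilde{K}_2 \leftrightarrow (\pol_0,\lambda)$ and the $\tilde{K}_2$-invariance of $s$, is indeed exactly the descent-theoretic input the paper recalls in the paragraph preceding the proposition, so you are aligned with the source.
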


\glsadd{glo@zeroA}
This basis gives a projective embedding $\phi_{\ThetaA}: A_k \to
\Pvar^{d-1}_k$ which is uniquely defined by the theta structure
$\ThetaA$. The point $(a_i)_{i \in \Zdelta} :=
\phi_{\ThetaA}(0_{A_k})$ is called the theta null point associated to
the theta structure.  Mumford proves \citep*{MR34:4269} that if
$4|\delta$, $\phi_{\ThetaA}(A_k)$ 
is the closed subvariety of $\proj_k^{d-1}$ defined by the homogeneous ideal
generated by the Riemann equations:
\begin{theorem}[Riemann equations]
  \label{prop@riemann}
  For all $x,y,u,v \in \Zstruct{2 \delta}$ that are
  congruent modulo $\Ztwo$, and all $\chi \in \dZtwo$, we have
\begin{multline}
\big(\sum_{t \in \Ztwo} \chi(t) \vartheta_{x+y+t} \vartheta_{x-y+t}\big).\big(\sum_{t \in \Ztwo} \chi(t)
a_{u+v+t} a_{u-v+t}\big)= \\
= \big(\sum_{t \in \Ztwo} \chi(t) \vartheta_{x+u+t} \vartheta_{x-u+t}\big).\big(\sum_{t \in \Ztwo} \chi(t)
a_{y+v+t} a_{y-v+t}\big). 
\label{eq@equations_Ak}
\end{multline}
\end{theorem}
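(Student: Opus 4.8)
The plan is to follow Mumford's proof of this theorem, which naturally splits into two parts: (a)~the quadratic relations~\eqref{eq@equations_Ak} hold identically on $\phi_{\ThetaA}(A_k)$, and (b)~when $4\mid\delta$ these relations already generate the full homogeneous ideal. For~(a) the engine is the sum-and-difference isogeny $\eta\colon A_k\times A_k\to A_k\times A_k$, $(x,y)\mapsto(x+y,x-y)$, whose kernel is the image of $A_k[2]$ under $t\mapsto(t,t)$, so $\deg\eta=2^{2g}$. Since $\pol$ is symmetric, the theorem of the cube (together with $(-1)^{*}\pol\cong\pol$) yields a canonical isomorphism $\eta^{*}(\pol\boxtimes\pol)\cong\pol^{2}\boxtimes\pol^{2}$; the hypothesis that $\deg\pol$ is prime to the characteristic of $k$, or that $A_k$ is ordinary, is exactly what keeps the relevant theta groups well-behaved.

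Next I would pass to global sections: $\eta^{*}$ embeds $\Gamma(A_k,\pol)\otimes\Gamma(A_k,\pol)$ into $\Gamma(A_k,\pol^{2})\otimes\Gamma(A_k,\pol^{2})$ as the subspace fixed by the canonical lift of $\Ker\eta\cong A_k[2]$, and on basis vectors it sends $\vartheta_{i}\otimes\vartheta_{j}$ to the function $(x,y)\mapsto\vartheta_{i}(x+y)\vartheta_{j}(x-y)$ (for compatible affine lifts). I would then decompose $\Gamma(A_k,\pol^{2})$ under the action --- via the level-$2\delta$ theta structure induced by $\ThetaA$ --- of a maximal isotropic subgroup of $A_k[2]\subset K(\pol^{2})$, obtaining $2^{g}$ eigenspaces $W_{\chi}$ indexed by $\chi\in\dZtwo$, each of dimension $\deg\pol$ and spanned by the averaged theta sections $\sum_{t\in\Ztwo}\chi(t)\,\vartheta_{\bullet+t}$ that appear in~\eqref{eq@equations_Ak}. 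Expanding $\eta^{*}(\vartheta_{i}\otimes\vartheta_{j})$ in the resulting bigraded basis of $\Gamma(\pol^{2})^{\otimes2}$ and using that $\im\eta^{*}$ lies in the $\Ker\eta$-invariant subspace --- which in particular is concentrated in the diagonal bidegrees $(\chi,\chi)$, this being the invariance under the diagonal $A_k[2]$ --- one reads off, for each $\chi$ and each $x,y,u,v\in\Zstruct{2\delta}$ that are congruent modulo $\Ztwo$ (the congruence being precisely the condition that the four monomials involved occupy a single bidegree), the identity~\eqref{eq@equations_Ak}; letting the free point specialize to $0_{A_k}$ then recovers the theta-null relations~\eqref{eq@equations_Mn}. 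Morally the Riemann relations are the structure constants of the multiplication $\Gamma(\pol)^{\otimes2}\to\Gamma(\pol^{2})$ read through $\eta^{*}$, so their validity is forced; the only real content at this stage is the intricate but routine index bookkeeping.

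For~(b), the inclusion $\phi_{\ThetaA}(A_k)\subseteq Z$, with $Z\subset\Pvar^{d-1}_{k}$ the closed subscheme cut out by~\eqref{eq@equations_Ak}, is immediate from~(a); the real work is upgrading this to an equality of homogeneous ideals. Here I would invoke that $4\mid\delta$ makes $\phi_{\ThetaA}$ a projectively normal embedding, so $\Sym^{m}\Gamma(\pol)\twoheadrightarrow\Gamma(\pol^{m})$ for all $m$, and then analyze $\bigoplus_{m\ge0}\Gamma(A_k,\pol^{m})$ as a graded module over itself under the theta-group action: the degree-$2$ part of the ideal of $Z$ is spanned by the Riemann relations by the eigenspace computation above, and an induction on degree --- propagating relations by multiplying with the $\vartheta_{i}$ and applying the Heisenberg action, together with the surjectivity just quoted --- shows that $Z$ carries no further relations, hence is reduced and irreducible of the expected dimension and coincides with $\phi_{\ThetaA}(A_k)$. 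The hard part will be exactly this generation step: the quadrics themselves drop out cleanly from the $\eta^{*}$/descent argument, but ruling out higher-degree relations requires the full strength of Mumford's analysis of the homogeneous coordinate ring under the theta group (the projective-normality input and the inductive propagation), and for that I would cite \cite{MR34:4269,MR36:2621} rather than reprove it.
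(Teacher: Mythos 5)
Your outline is a faithful reconstruction of Mumford's argument, and it matches the route the paper takes: the paper does not reprove Theorem~\ref{prop@riemann} but cites Mumford~\cite{MR34:4269} directly, and when it later establishes the generalization to arbitrary points (Theorem~\ref{th_riemann_rel}) it uses exactly your sum-and-difference isogeny $\xi(x,y)=(x+y,x-y)$, the induced product theta structure on $A_k\times A_k$, and the isogeny theorem to produce the $\chi$-averaged identities by index bookkeeping. Your deferral to Mumford for the ideal-generation step (projective normality plus the theta-group propagation of quadrics when $4\mid\delta$) is also precisely how the paper handles that part, via citation rather than reproof.
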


\glsadd{glo@Atilde}
\glsadd{glo@thetatildei}
Let $p_{\Avar_k(V)}: \Avar_k(V) \to \Pvar_k(V)$ be the canonical
projection. Let $\Atilde=p_{\Avar_k(V)}^{-1}(A_k)$ be the affine cone of
$A_k$ and denote by $p_{A_k}:\Atilde \to A_k$, the application induced
by $p_{\Avar_k(V)}$. Since this affine cone will play a central role
in this paper, we take the following convention: if $(\theta_i)_{i \in
  \Zdelta}$ is a homogeneous coordinate system on $\Pvar(V)$ then we
denote by $(\thetatilde_i)_{i \in \Zdelta}$ the associated affine
coordinate system of $\Avar(V)$. For
instance, $p_{A_k}$ is given by $(\thetatilde_i(x))_{i \in \Zdelta}
\mapsto (\theta_i(x))_{i \in \Zdelta}$. It should be remarked that, for
$i \in \Zdelta$, $\thetatilde_i$ is a well defined function on
$\Atilde$ and for any geometric point $x \in \Atilde$, we denote by $\thetatilde_i(x)$ 
its values in $x$.

\glsadd{glo@rhopola}
\glsadd{glo@rhopol}
Since the action of $\Gpol$ on $V$ is affine, the
action~\eqref{eq@actiontheta} gives an action $\rhopola^*$ on
$\Atilde$. This action descends to a projective action $\rhopol^*$ of
$\Kpol$ on $A_k$ which is simply the action by translation.
We will use the same notations for the action of
$\Hdelta$ (resp. of $\Kdelta$) induced by $\ThetaA$. 


\subsection{Isogenies compatible with a theta structure}
\label{subsec@isogeniethm}

\glsadd{glo@Bk}
\glsadd{glo@zeroB}
\glsadd{glo@pi}
Let $\delta' \in \Z^g$ be such that $2|\delta' | \delta$, and write
$\delta=\delta' \cdot \delta''$. In the following we consider
$\Zdeltab$ as a subgroup of $\Zdelta$ via the map $\phi:
(x_i)_{i \in [1..g]} \in \Zdeltab \mapsto (\delta''_i x_i)_{i \in
  [1..g]} \in \Zdelta$. From now on, when we write $\Zdeltab \subset \Zdelta$, we
  always refer to this map. Let $K$ be the subgroup
$\ThetabarA(\dZstruct{\delta''})$ of $\Kpolb$ and let $\pi_K$ be
the isogeny $A_k \to B_k=A_k/K$. By Grothendieck descent theory, the level
subgroup $\tilde{K}:=s_{\Kpolb}(K)$ induces a polarization $\pol_0$ on
$B_k$, such that $\pol \simeq \pi_K^{*}(\pol_0)$. The theta group
$G(\pol_0)$ is isomorphic to $\Ze(\tilde{K}) / \tilde{K}$ where
$\Ze(\tilde{K})$ is the centralizer of $\tilde{K}$ in $G(\pol)$
\cite{MR34:4269}. Let
$\ThetaB$ be the unique theta structure on $B_k$ compatible with
the theta structure on $A_k$ \cite[Sec. 3]{0910.4668v1}. We have
\cite[Prop. 4]{0910.4668v1}:

\begin{proposition}[Isogeny theorem for compatible theta structures]\label{prop@iso}
  Let $\phi: \Zdeltab \to \Zdelta$ be the canonical embedding. Let
  $(\vartheta^{\ThetaA}_i)_{i \in \Zdelta}$ 
  (resp. $(\vartheta^{\ThetaB}_i)_{i \in \Zdeltab}$) be the
  canonical basis of $\pol$ (resp. $\pol_0$) associated to
  $\ThetaA$ (resp. $\ThetaB$). There exists some $\omega \in
  \overline{k}^*$ such that for all $i \in \Zdeltab$
  \begin{equation}
    \pi_K^*(\vartheta^{\ThetaA}_i) = \omega
    \vartheta^{\ThetaB}_{\phi(i)}.
    \label{eq@isotheorem}
  \end{equation}
  In particular, the theta null point of $B_k$ is given by
  \begin{equation}
  (b_i)_{i \in \Zdeltab} = (a_{\phi(i)})_{i \in \Zdeltab}
    \label{eq@isotheorem0}
  \end{equation}
\end{proposition}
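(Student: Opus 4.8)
The plan is to realise the two canonical bases inside a single vector space — the invariants $\Gamma(A_k,\pol)^{\Ktilde}$ — and compare them there by means of the action formula~\eqref{eq@actiontheta}. First I would recall from~\cite{MR34:4269} that the level subgroup $\Ktilde=s_{\Kpolb}(K)$, with $K=\ThetabarA(\hat Z(\delta''))$, is precisely the Grothendieck descent datum making $\pol$ descend to $(B_k,\pol_0)$; hence $\pi_K^{*}$ identifies $\Gamma(B_k,\pol_0)$ with the subspace $\Gamma(A_k,\pol)^{\Ktilde}$ of $\Ktilde$-invariant sections, and — since translations on $A_k$ descend through $\pi_K$ to translations on $B_k$ — this identification is equivariant for the canonical isomorphism $G(\pol_0)\cong\Ze(\Ktilde)/\Ktilde$, the right-hand group acting on $\Gamma(A_k,\pol)^{\Ktilde}$ by restriction of the $\Gpol$-action.

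Next I would pin down $\Gamma(A_k,\pol)^{\Ktilde}$ on the theta basis. Since $\Ktilde$ is the image by $\ThetaA$ of the elements $(1,0,\hat y)$ with $\hat y\in\hat Z(\delta'')\subset\hat Z(\delta)$, formula~\eqref{eq@actiontheta} shows that $\vartheta^{\ThetaA}_h$ is $\Ktilde$-invariant exactly when $h$ lies in the orthogonal of $\hat Z(\delta'')$ for the canonical pairing, which a short check identifies with $\phi(\Zdeltab)$ (using $\phi(h)=(\delta''_i h_i)_i$); thus $\Gamma(A_k,\pol)^{\Ktilde}=\bigoplus_{i\in\Zdeltab}\overline{k}\,\vartheta^{\ThetaA}_{\phi(i)}$. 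Then, unwinding the definition of ``theta structure on $B_k$ compatible with $\ThetaA$'' from~\cite[Sec.~3]{0910.4668v1}, the section $i\mapsto\ThetaB((1,i,0))$ corresponds under $G(\pol_0)\cong\Ze(\Ktilde)/\Ktilde$ to $i\mapsto\ThetaA((1,\phi(i),0))$, and $j\mapsto\ThetaB((1,0,j))$ to a lift of $\ThetaA$ along $\hat Z(\delta)\twoheadrightarrow\hat Z(\delta')$. Feeding this into~\eqref{eq@actiontheta}, and using that $\edelta$ restricts to the level-$\delta'$ canonical pairing under $\phi$ on the first variable and under $\hat Z(\delta)\twoheadrightarrow\hat Z(\delta')$ on the second, one obtains that $(\alpha,i,j)\in\Hdeltab$ acts on $\Gamma(A_k,\pol)^{\Ktilde}$ by $\vartheta^{\ThetaA}_{\phi(h)}\mapsto\alpha\,e_{c,\delta'}(-i-h,j)\,\vartheta^{\ThetaA}_{\phi(i+h)}$ — exactly the level-$\delta'$ action law~\eqref{eq@actiontheta}. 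Since a basis satisfying~\eqref{eq@actiontheta} is unique up to a single global scalar (Proposition~\ref{prop@thetadef}), the two bases $(\pi_K^{*}\vartheta^{\ThetaB}_i)_i$ and $(\vartheta^{\ThetaA}_{\phi(i)})_i$ of $\Gamma(A_k,\pol)^{\Ktilde}$, which both obey it, differ by one $\omega\in\overline{k}^{*}$; this is~\eqref{eq@isotheorem}. Evaluating at $0_{A_k}$, where $\pi_K(0_{A_k})=0_{B_k}$, gives $b_i=\omega\,a_{\phi(i)}$ for all $i$, whence~\eqref{eq@isotheorem0} as an equality of projective points.

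I expect the only delicate point to be the middle step invoking compatibility: one must check carefully that the isomorphism $G(\pol_0)\cong\Ze(\Ktilde)/\Ktilde$ is the one matching the quotient $\Kpolb\twoheadrightarrow K_2(\pol_0)$ together with the distinguished sections $s_{\Kpola}$, $s_{\Kpolb}$ on both sides, so that $\ThetaB$ really is ``$\ThetaA$ read modulo $\Ktilde$''; the orthogonality computation and the two pairing identities are then routine. A more hands-on alternative avoids the action law entirely: by Proposition~\ref{prop@thetadef}, write the sections $s$ defining the two bases as pullbacks of a single section $s_0$ of the degree-one bundle on the common quotient $\Az_k=A_k/\Kpolb=B_k/K_2(\pol_0)$ along $A_k\to B_k\to\Az_k$ (the first arrow being $\pi_K$), so that $\pi_K^{*}s_B=\omega\,s_A$; then pull the formula $\vartheta^{\ThetaB}_i=\psi^{-1}\tau^{*}(s_B)$ through $\pi_K$, using that translations commute with $\pi_K$ and that $\ThetaB((1,i,0))$ lifts to $\ThetaA((1,\phi(i),0))$ — the same descent-data bookkeeping resurfaces, now phrased for the descent data of Section~\ref{subsec@thetastruct}.
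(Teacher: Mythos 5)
The paper offers no proof here: it simply quotes the result from \cite[Prop.~4]{0910.4668v1} and remarks that it is a particular case of Mumford's isogeny theorem \cite[Th.~4]{MR34:4269}. Your argument supplies what those references actually do, and it is correct: identify $\pi_K^{*}\Gamma(B_k,\pol_0)$ with the $\Ktilde$-invariants of $V=\Gamma(A_k,\pol)$ via descent; read off from the action law~\eqref{eq@actiontheta} that the invariant basis vectors are exactly the $\vartheta^{\ThetaA}_{\phi(i)}$ with $i\in\Zdeltab$ (orthogonality against $\dZstruct{\delta''}$); check, using the compatibility of $\ThetaA$ and $\ThetaB$ through $G(\pol_0)\cong\Ze(\Ktilde)/\Ktilde$, that the induced $\Hdeltab$-action on $V^{\Ktilde}$ is precisely the type-$\delta'$ action law; and invoke the uniqueness, up to one scalar, of a basis obeying that law, which the paper records just above Proposition~\ref{prop@thetadef}. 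The ``hands-on'' variant you sketch at the end --- pushing a single section $s_0$ of the degree-one bundle on $\Az_k=A_k/\Kpolb=B_k/K_2(\pol_0)$ through the two descent data --- is essentially the form Mumford's own argument takes, so both of your versions are faithful to the cited source rather than a new route. One side observation your computation brings out: as printed, equation~\eqref{eq@isotheorem} has the superscripts $\ThetaA$ and $\ThetaB$ interchanged; consistency with~\eqref{eq@isotheorem0} (and with your derivation) requires $\pi_K^{*}(\vartheta^{\ThetaB}_i)=\omega\,\vartheta^{\ThetaA}_{\phi(i)}$.
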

The above proposition is a particular case of the more general
isogeny theorem \cite[Th. 4]{MR34:4269}.  

\glsadd{glo@Btilde}
\glsadd{glo@pitilde}
On the affine cones this proposition shows that, given
the theta null point $(a_i)_{i \in \Zdelta}$, the morphism
\begin{align*}
  \tilde{\pi}_{K}: \tilde{A}_k &\to \tilde{B}_k \\
( \tilde{\theta} )_{i \in \Zdelta} &\mapsto 
( \tilde{\theta} )_{i \in \phi(\Zdeltab)} 
\end{align*}
makes the following diagram commutative:

\begin{center}
\begin{tikzpicture}[auto, 
  text height=1.5ex, text depth=0.25ex, 
  myarrow/.style={->,shorten >=1pt}]
    \matrix (A_k) [row sep=1.5cm, column sep=1.5cm, matrix of math nodes]
    { \tilde{A}_k &  A_k \\
    \tilde{B}_k &  B_k \\} ;
    \draw[myarrow] (A_k-1-1) to node {$p_{A_k}$} (A_k-1-2);
    \draw[myarrow] (A_k-2-1) to node {$p_{B_k}$} (A_k-2-2);
    \draw[myarrow] (A_k-1-1) to node {$\tilde{\pi}$} (A_k-2-1);
    \draw[myarrow] (A_k-1-2) to node {${\pi}$} (A_k-2-2);
\end{tikzpicture}
\end{center}

For the sake of simplicity, we set from now on $\deltao=\overn$, and
$\deltab=\overl$ so that $\delta=\overln$ (we stated Proposition
(\ref{prop@iso}) in a more general form because we can use it to compute the $\ell$-torsion
on $B_k$, see Section~\ref{subsec@ComputingModular}). Let $(b_i)_{i \in \Zn}$ be a
theta null point associated to a triple $(B_k, \pol_0, \ThetaB)$; we want
to compute an $\ell$-isogeny $B_k \to A_k$.
Since $n$ is fixed, we cannot apply the isogeny theorem directly since it
requires $\ell |n$. However, if $(a_i)_{i \in \Zln} \in \Mln$ is a theta
null point corresponding to a triple $(A_k, \pol, \ThetaA)$ where
the theta structure $\ThetaA$ is compatible with $\ThetaB$ (this is
equivalent to $(a_i)_{i \in \Zln}$ satisfying \eqref{eq@isotheorem0}),  
then
Proposition~\ref{prop@iso} gives an (explicit) isogeny $\pi: A_k \to B_k$.
So to the modular point $(a_i)_{i \in \Zln}$ we may associate the isogeny
$\pidual: B_k \to A_k$ and this is the isogeny we compute in Step~2 of our
algorithm (Section~\ref{subsec@algo2}).

We have the following diagram
\begin{center}
\begin{tikzpicture}[scale=2.5,auto, 
  text height=1.5ex, text depth=0.25ex, 
  myarrow/.style={->,shorten >=1pt}]
  \node[anchor=west] (B) at (0.5,-0.87) {$y \in B_k$} ;
  \node[anchor=west]  (A2) at (1,0) {$z \in A_k$} ;
  \node[anchor=west]  (A1) at (0,0) {$x \in A_k$} ;
  \draw[myarrow] (B) to node[swap] {$\pidual$} (A2);
  \draw[myarrow] (A1) to node[swap] {$\pi$} (B);
  \draw[myarrow] (A1) to node {$[\ell]$} (A2);
\end{tikzpicture}
\end{center}
This diagram shows that it is possible to obtain an explicit description of
the rational map $z=\pidual(y)$ by eliminating the variables $x$ in the
ideal generated by $(y=\pi(x)$, $z=\ell \cdot x)$. This can be done using
a Gr\"obner basis algorithm. In Section~\ref{sec@isogenies} gives a
much faster algorithm which uses the
addition formulas in $B_k$ to find the equations of $\pidual$ directly.

\subsection{Modular correspondences}
\label{subsec@modcorr}

In the previous section, we have shown how to compute an isogeny with a
prescribed kernel $K \subset K_2(\pol)[\ell]$ that is isotropic for the
commutator pairing.
Now let $K \subset A_k[\ell]$ be any isotropic subgroup such that we can
write $K= K_1 \times K_2$ with $K_i \subset K_i(\pol)$. Let $B_k=A_k/K$ and
$\pi$ be the associated isogeny. Let $\ThetaB$ and $\ThetaA$ be
$\pi$-compatible theta structures in the sense of Mumford~\cite{MR34:4269}. 
We briefly explain this notion:
if two abelian varieties $(A_k, \pol, \ThetaA)$ and $(B_k,
\pol_0, \ThetaB)$ have $\pi$-compatible marked theta structures, it means that
we have $\pi^{\ast}(\pol_0)=\pol$. By Grothendieck descent
theory, this define a level subgroup $\tilde{K} \subset G(\pol)$ of the
kernel of $\pi$. We have seen in Section~\ref{subsec@isogeniethm} that we
have $G(\pol_0)=\Ze(\Ktilde)/\Ktilde$ where $\Ze(\Ktilde)$ is the
centralizer of $\Ktilde$. The structures $\ThetaA$ and $\ThetaB$ are said to be compatible
if they respect this isomorphism.
The isogeny theorem (\cite[Theorem 4]{MR34:4269}) then gives
a way to compute $(\pi^*(\vartheta_i^{\ThetaB}))_{i \in \Zn}$ given
$(\vartheta_i^{\ThetaA})_{i \in \Zln}$. 

We recall briefly how this works: if $K_1=0$, we say that $\pi$ is an
isogeny of type~$1$, and if $K_2=0$ that $\pi$ is an isogeny of type~$2$. 
In the paper~\cite{0910.4668v1} we have studied the case of isogenies of
type~$1$ and~$2$;
in fact, the notion of compatible isogenies we had defined in these cases
is nothing but
a particular case of the notion of compatible
isogenies described above.
Obviously, by composing isogenies, we only need to study the case of
compatible theta structures between isogenies of type~$1$ or~$2$. We have
already seen the case of isogenies of type~$1$ in the previous Section. Now
let $\Inv_0$ be the automorphism of the Heisenberg group $\Hln$ that permutes
$\Zln$ and $\dZln$: $\Inv_0(\alpha,x,y)=(\alpha,y,x)$. 
We define $\Inv_{A_k}= \ThetaA \circ \Inv_0 \circ \ThetaA^{-1}$, where $\Inv_{A_k}$ is
the automorphism of the Theta group of $A_k$ that permutes $\Kpola$ and
$\Kpolb$. (There is a similar automorphism $\Inv_{B_k}$ of the theta group of
$B_k$; we will usually note these automorphisms $\Inv$ since the theta
group is clear from the context.)
If $\pi_2$ is a compatible
isogeny of type~$2$ between $(A_k,\pol, \ThetaA)$ and $(B_k, \pol_0,
\ThetaB)$, then $\pi_2$ is a compatible isogeny of type~$1$ between 
$(A_k,\pol, \Inv_{A_k} \circ \ThetaA)$ and $(B_k, \pol, \Inv_B \circ \ThetaB)$.

Since the action of $\Inv$ is given by~\cite[Section 5]{0910.4668v1}
\begin{equation}
  \theta_i^{\Inv_{A_k} \circ \ThetaA}= \sum_{j \in \dZln} e(i,j)
\theta_j^{\ThetaA}, 
  \label{eq@inversion_action}
\end{equation}
we see that we have for all $i \in \Zn$
\[ \pi^*  (\vartheta^{\ThetaB}_i)  =  \sum_{j \in \Zl}
\vartheta^{\ThetaA}_{i+n j}.\]

In the following, we focus on isogenies of type~$1$ but
it is easy to adapt the following
to isogenies of type~$2$ (and hence more generally to compatible isogenies
between theta structures) using the action of $\Inv$. Considering both types of isogenies can be useful, see
Section~\ref{subsec@theta_and_addition} or
Section~\ref{subsec@ComputingModular}. The modular correspondence described in Section~\ref{subsec@algo2} is given by 
\[\phi: \Mln \to \Mn \times \Mn, (a_i)_{i \in \Zln} \mapsto ((a_i)_{i
\in \Zn}, (\sum_{j \in \Zl} a_{i+nj})_{i \in \Zn}). \]
Let $\phi_1$ (resp. $\phi_2$) be the composition of $\phi$ with the first
(resp. second) projection.
Let $(a_i)_{i \in \Zln}$ be the theta null point of $(A, \pol,
\ThetaA)$, and put $(b_i)_{i \in \Zl}=\phi_1\left( (a_i)_{i \in \Zl} \right)$, 
and $(c_i)_{i \in \Zl}=\phi_2\left( (a_i)_{i \in \Zl} \right)$. Then
$(b_i)_{i \in \Zl}$ is the theta null point corresponding to the variety
$B_k=A_k / K_2(\pol)[\ell]$, and $(c_i)_{i \in \Zl}$ corresponds to $C_k=A_k /
K_1(\pol)[\ell]$.

The following diagram shows that the composition
$\pi_2 \circ \pidual: B_k \to C_k$ is an $\ell^2$-isogeny:
\begin{center}
\begin{tikzpicture}[mydiag]
    \matrix (W) [mymatrix]
    { B_k \&    \&     \\
          \& A_k \&    \\
      B_k \&     \& C_k \\
     } ;
  \draw[myarrow] (W-1-1) to node[swap] {$[\ell]$} (W-3-1);
  \draw[myarrow] (W-1-1) to node {$\pidual$} (W-2-2);
  \draw[myarrow] (W-2-2) to node[swap] {$\pi$} (W-3-1);
  \draw[myarrow] (W-2-2) to node {$\pi_2$} (W-3-3);
  \end{tikzpicture}
\end{center}


\subsection{The action of the theta group on the affine cone and isogenies}
\label{subsec@action}

For the rest of the article, we suppose given an
abelian variety with a theta structure $(B_k, \pol_0, \ThetaB)$ with
associated theta null point $(b_i)_{i \in \Zn}$, and a valid theta null point $(a_i)_{i \in \Zln}$ associated to a triple $(A_k, \pol, \ThetaA)$ such that $\ThetaB$ and $\ThetaA$ are
compatible \cite{0910.4668v1}.  Let $\pitilde: \tilde{A}_k \to
\tilde{B}_k$ be the morphism such that
$\pi^*(\vartheta^{\ThetaB}_i)=\vartheta^{\ThetaA}_i$ for $i \in \Zn$. Note that
the isogeny $\pi : A_k \rightarrow B_k$
lifts to the affine cone as $\pitilde$. 


Let $\{ e_i \}_{i \in [1..g]}$ be the canonical ``basis'' of $\Zln$, 
and $\{ f_i\}_{i \in [1..g]}$ be the canonical ``basis'' of $\dZln$ so
that $\{ e_i, f_i\}_{i \in [1..g]}$ is the canonical symplectic basis of
$\Kln$. For $i \in \Zln$, we let $P_i=\ThetabarA\left( i,0 \right))$ and
for $i \in \dZln$ we let 
$Q_i=\ThetabarA\left( 0,i \right)$. 
The points $\left\{ P_{e_i}, Q_{f_i} \right\}_{i \in [1..g]}$ form a
symplectic basis of $\Kpol$ for the commutator pairing induced by the theta structure: we
have for $i,j \in [1..g]$  $e_{\pol}(P_{e_i},Q_{f_j})= \delta^i_j$ where
$\delta^i_j$ is the Kronecker symbol.

We have seen (see Section~\ref{subsec@thetastruct}) that the theta
structure $\ThetaA$
induces a section $s=s_{\Kpol}: \Kpol \to \Gpol$ of the canonical projection $\kappa: \Gpol \to \Kpol$. The kernel $K_{\pi}$ of the isogeny $\pi: A_k \to B_k$ is
$\ThetabarA(\dZl)$. Let
$\Ktilde_{\pi} = s(K_{\pi})$ and recall (see
Section~\ref{subsec@isogeniethm}) that $\GpolO =
\Ze(\Ktilde_{\pi})/\Ktilde_{\pi}$. In particular, we have
$\KpolO=\Ze(K_{\pi})/ K_{\pi}$ where
$\Ze(K_{\pi})=\kappa(\Ze(\Ktilde_{\pi}))$ is the orthogonal of
$K_{\pi}$ for the commutator pairing $e_{\pol}$.
Explicitly, we have:
$K_{\pi}=\left\{ Q_i \right\}_{i \in \Zl}$ and
$\Ze(K_{\pi})=\left\{ P_i \right\}_{i \in \Zn} \times \left\{ Q_i
\right\}_{i \in \Zln}$ so that
$\KpolO= \left\{ \pi(P_i) \right\}_{i \in \Zn} \times \pi(\left\{ Q_i
\right\}_{i \in \Zln})$.


%

In the following, if $X_k$ is an abelian variety, we denote by
$\Aut^*(X_k)$ the group of isomorphisms of $X_k$ seen as an
algebraic variety, in particular an element of $\Aut^*(X_k)$ does not
necessarily fix the point $0$ of $X_k$. The action by translation $\rhopol^*: \Kpol \to \Aut^*(A_k)$ induces
an action $\rhopol^*: \Kpol \to \Aut^*(B_k)$ via $\pi$: if $x \in
\Kpol$, the action of $\rhopol^*(x)$ on $B_k$ is the translation by
$\pi(x)$. This action extends the action by translation $\rhopolz^*:
\KpolO \to \Aut^*(B_k)$. We recall that the action of $\Gpol$ on
$V=\Gamma(A_k,\pol)$ is given by $(x,\psi_x).v = \psi_x^{-1}
\tau_x^*(v)$ for $(x,\psi_x) \in \Gpol$ and $v \in V$
from which we derive an action of $\Gpol$ on $\Aff_k(V)$. By restriction, we
obtain an action $\rhopola^*:
\Gpol \to \Aut^*(\Atilde)$. Similarly, we define an action
$\rhopolaz^*: \GpolO \to \Aut^*(\Btilde)$. (Note that $\pitilde$
does not induce an action from $G(\pol)$ on $\Btilde$ via $\rhopola^*$; see
Corollary~\ref{cor@determination}). Still, we would like to be able to
recover $\rhopola^*$ from $\rhopolaz^*$ and the theta
structure $\ThetaB$. First, we have:

\begin{proposition}
  Let $g \in \Ze(\Ktilde_{\pi})$ and note $\overline{g}$ its image in $\Ze(\Ktilde_{\pi}) / \Ktilde_{\pi}$. We have $\rhopolaz^*(\overline{g})=\pitilde \circ \rhopola(g)$.
  \label{prop@compaaction}
\end{proposition}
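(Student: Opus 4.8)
The plan is to unwind both sides of the claimed identity as automorphisms of $\Btilde$ and check they agree after composing with the surjection $\pitilde$, using that $\pitilde$ comes from the equality $\pi^*(\vartheta^{\ThetaB}_i)=\vartheta^{\ThetaA}_i$ on sections. First I would recall that $\rhopolaz^*$ is defined via the identification $\GpolO = \Ze(\Ktilde_\pi)/\Ktilde_\pi$: the action of $\overline g$ on $V_0=\Gamma(B_k,\pol_0)$ is $\overline g.v_0 = \psi_y^{-1}\tau_y^*(v_0)$ where $y=\kappa(\overline g)=\pi(\kappa(g))$ and $\psi_y$ is the isomorphism $\pol_0\to\tau_y^*\pol_0$ descended from $g$ through Grothendieck descent. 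The key point is the compatibility of this descent with pullback: for $g\in\Ze(\Ktilde_\pi)$ with $g=(x,\psi_x)$, the descended element $\overline g=(y,\psi_y)$ satisfies $\pi^*(\psi_y) = \psi_x$ under the canonical identification $\pi^*\pol_0 \simeq \pol$ (this is exactly what it means for $\ThetaB$ to be compatible with $\ThetaA$, and it is the content of the isogeny theorem, Proposition~\ref{prop@iso}, applied at the level of the theta group action rather than just the null point).

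Concretely, both sides are maps $\Btilde\to\Btilde$, and since $\pitilde:\Atilde\to\Btilde$ is surjective it suffices to prove $\rhopolaz^*(\overline g)\circ\pitilde = \pitilde\circ\rhopola^*(g)$ as maps $\Atilde\to\Btilde$. Evaluating on coordinates, $\pitilde$ sends $(\thetatilde_i(\xtilde))_{i\in\Zln}$ to $(\thetatilde_i(\xtilde))_{i\in\Zn}$, i.e.\ it is dual to the inclusion $V_0\hookrightarrow V$, $\vartheta^{\ThetaB}_i\mapsto\vartheta^{\ThetaA}_i$ for $i\in\Zn$. So I would compute: the right-hand side acts on $\xtilde$ by first applying $\rhopola^*(g)$ (which on functions is $v\mapsto \psi_x^{-1}\tau_x^*(v)$ on $V$) and then restricting to the coordinates indexed by $\Zn$; the left-hand side first restricts to $\Zn$-coordinates, then applies $v_0\mapsto \psi_y^{-1}\tau_y^*(v_0)$ on $V_0$. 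Using $\pi^*(\psi_y)=\psi_x$ and $\pi\circ\tau_x = \tau_y\circ\pi$ (since $y=\pi(x)$), pulling back the $V_0$-action along $\pi^*$ gives exactly the restriction of the $V$-action to the image of $V_0$; chasing through $\pi^*(\vartheta^{\ThetaB}_i)=\vartheta^{\ThetaA}_i$ then yields the two composites agree on every coordinate $\vartheta^{\ThetaB}_i$, $i\in\Zn$.

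The main obstacle is keeping the descent bookkeeping honest: one must verify that the isomorphism $\psi_y:\pol_0\to\tau_y^*\pol_0$ attached to $\overline g$ is genuinely characterized by $\pi^*\psi_y=\psi_x$ under $\lambda:\pi^*\pol_0\xrightarrow{\sim}\pol$, and that $g\in\Ze(\Ktilde_\pi)$ is exactly what makes $\psi_x$ descend (it normalizes, hence centralizes mod $\Ktilde_\pi$, the level subgroup cutting out $\pol_0$). This is where the hypothesis $g\in\Ze(\Ktilde_\pi)$ is used and where one invokes $\GpolO=\Ze(\Ktilde_\pi)/\Ktilde_\pi$ from~\cite{MR34:4269}. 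Everything else is a formal computation with the translation action $v\mapsto\psi_x^{-1}\tau_x^*v$ and the naturality square $\pi\tau_x=\tau_{\pi(x)}\pi$; I would write it out on the canonical basis $(\vartheta^{\ThetaB}_i)_{i\in\Zn}$ to make the coordinate identity manifest, and note that the ambiguity is only the global scalar $\omega$ of Proposition~\ref{prop@iso}, which is harmless since $\pitilde$ was defined to absorb it (we took $\pi^*\vartheta^{\ThetaB}_i=\vartheta^{\ThetaA}_i$ on the nose for $i\in\Zn$).
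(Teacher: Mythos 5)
Your proposal is correct and takes the same route as the paper: the paper's proof is the single sentence that this is ``an immediate consequence of the fact that the two theta structures $\ThetaA$ and $\ThetaB$ are compatible,'' and your argument supplies exactly the content of that compatibility---that the descent isomorphism $\psi_y$ attached to $\overline{g}$ satisfies $\pi^*\psi_y=\psi_x$ under $\lambda\colon\pi^*\pol_0\iso\pol$ (which is what $g\in\Ze(\Ktilde_\pi)$ guarantees), together with $\pi\circ\tau_x=\tau_{\pi(x)}\circ\pi$, so that $\pi^*\colon V_0\hookrightarrow V$ intertwines the two theta-group actions and hence its dual $\pitilde$ intertwines the affine actions. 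Nothing is missing; you have simply unpacked the one-liner, and the scalar ambiguity $\omega$ is indeed absorbed by the normalization $\pi^*\vartheta^{\ThetaB}_i=\vartheta^{\ThetaA}_i$ built into $\pitilde$.
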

\begin{proof}
  This is as an immediate consequence of the fact that the two theta structures $\ThetaA$
  and $\ThetaB$ are compatible.
\end{proof}

For $g \in \Gpol$, we can define a mapping $\pitilde_{g}: \Atilde \to
\Btilde$ given on geometric points by $\xtilde \mapsto \pitilde(\rhopola^*(g).\xtilde)$.  If $g \in
\Ze(\Ktilde_{\pi})$; Proposition~\ref{prop@compaaction} then shows
that $\pitilde_{g}=\rhopolaz^*(\overline{g}) \circ \pitilde$, hence
$\pitilde_{g}$ can be recovered from $\pitilde$ and the theta
structure $\ThetaB$. Since $\Ze(\Ktilde_{\pi}) \supset s(\Kpolb)$, we only have to study the mappings
$\pitilde_i=\pitilde_{s(P_i)}$ for $i \in \Zln$. They
are given on geometric points by 
\[ \pitilde_i( (\thetatilde_j(\tilde{x}))_{j \in \Zln}) =
(\thetatilde_{i+\ell.j}(\tilde{x}))_{j \in \Zn}. \] 


\glsadd{glo@Scal}
\glsadd{glo@pitildei}
\begin{corollary}
  \begin{enumerate}
    \item  Let $\mathcal{S}$ be a subset of $\Zln$, such that
    $\mathcal{S}+\Zn=\Zln$. Then $\xtilde \in \Atilde$ is uniquely determined by
  $\{ \pitilde_i(\xtilde) \}_{i \in \mathcal{S}}$.
    \item Let $\ytilde \in \Atilde$ be such that $\pitilde(\ytilde)=\pitilde(\xtilde)$. Then
      there exists $j \in \dZl \subset \dZln$ such that
      $\ytilde=(1,0,j).\xtilde$ and
      \[\pitilde_i(\ytilde)= {e_{\overln}(i,j)} \pitilde_i(\xtilde).\] 
      In particular
      $\pitilde_i(\ytilde)$ and $\pitilde_i(\xtilde)$ differ by an
      $\ell^{th}$-root of unity.
  \end{enumerate}
  \label{cor@determination}
    \end{corollary}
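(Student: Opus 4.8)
The plan is to prove the two parts by exploiting the explicit formula $\pitilde_i\big((\thetatilde_j(\xtilde))_{j \in \Zln}\big) = (\thetatilde_{i+\ell j}(\xtilde))_{j \in \Zn}$ together with the description of the kernel $K_{\pi} = \{Q_i\}_{i \in \dZl}$ and the action of $\Gpol$ on $\Atilde$ via equation~\eqref{eq@actiontheta}.

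For part~(1): since $\ThetaB$ is a theta structure and $4|n$, the morphism $\pitilde$ records the affine coordinates $\thetatilde_j(\xtilde)$ for $j$ running over the subgroup $\Zn \subset \Zln$ (identified via $j \mapsto \ell j$). Applying $\pitilde_i = \pitilde \circ \rhopola^*(s(P_i))$ and using that $\rhopola^*(s(P_i))$ shifts indices by $i$ (as read off from~\eqref{eq@actiontheta}, up to the $\edelta$-scalar which is harmless for determination), the collection $\{\pitilde_i(\xtilde)\}_{i \in \mathcal{S}}$ recovers the coordinates $\thetatilde_{i+\ell j}(\xtilde)$ for all $i \in \mathcal{S}$ and all $j \in \Zn$. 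First I would observe that the index set $\{\,i + \ell j : i \in \mathcal{S},\ j \in \Zn\,\}$ equals $\mathcal{S} + \Zn = \Zln$ by hypothesis; hence every affine coordinate $\thetatilde_k(\xtilde)$, $k \in \Zln$, appears (possibly several times, but consistently), so $\xtilde \in \Atilde$ is determined. One must keep track of the scalars $\edelta(-i-h,j)$ introduced by the action, but these are fixed roots of unity depending only on the indices and the theta structure, not on $\xtilde$, so they do not obstruct recovery of $\xtilde$ from the data.

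For part~(2): if $\pitilde(\ytilde) = \pitilde(\xtilde)$, then $p_{A_k}(\ytilde)$ and $p_{A_k}(\xtilde)$ have the same image under $\pi$, so they differ by an element of $K_{\pi} = \ThetabarA(\dZl)$; lifting to the affine cone and using that $\pitilde$ is equivariant for the action of $\Ze(\Ktilde_{\pi})$, one gets $\ytilde = (1,0,j).\xtilde$ for some $j \in \dZl \subset \dZln$ (the ambiguity in the lift being absorbed because $(1,0,j)$ for $j \in \dZl$ acts trivially after $\pitilde$ — this is exactly the statement that $\pitilde$ is $\Ktilde_\pi$-invariant on the affine cone). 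Then I would compute $\pitilde_i(\ytilde)$ directly: $\pitilde_i(\ytilde) = \pitilde_i\big((1,0,j).\xtilde\big)$, and since the action of $(1,0,j)$ commutes past the index-shift defining $\pitilde_i$ and acts on $\thetatilde_{i+\ell m}$ by the scalar $\edelta$-pairing with $j$, one reads off $\pitilde_i(\ytilde) = e_{\overln}(i,j)\,\pitilde_i(\xtilde)$. Finally, since $j \in \dZl$, the value $e_{\overln}(i,j)$ is an $\ell$-th root of unity, giving the last assertion.

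The main obstacle will be part~(1): one must argue carefully that the various $\pitilde_i$, $i \in \mathcal{S}$, fit together consistently — i.e.\ that when $i + \ell j = i' + \ell j'$ with $i,i' \in \mathcal{S}$ the corresponding entries of $\pitilde_i(\xtilde)$ and $\pitilde_{i'}(\xtilde)$ agree up to the predicted scalar, so that no contradiction arises and the reconstruction of $\xtilde$ is well-defined. This is really a bookkeeping exercise with the commutator/canonical pairing $\edelta$ and the group law on $\Zln$, using that $\mathcal{S} + \Zn = \Zln$; once the index combinatorics are set up, both parts follow immediately from the explicit formula for $\pitilde_i$ and the $\Gpol$-action~\eqref{eq@actiontheta}.
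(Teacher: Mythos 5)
Your proof follows essentially the same approach as the paper's: for part~(1) it reads off the coordinates $\thetatilde_{i+\ell j}(\xtilde)$ from the tuples $\pitilde_i(\xtilde)$ and uses $\mathcal{S}+\Zn=\Zln$, and for part~(2) it identifies $\ytilde=(1,0,j).\xtilde$ modulo a scalar and pins that scalar down using $\pitilde(\ytilde)=\pitilde(\xtilde)$. Two small remarks: in part~(1) the $\edelta$-scalars you worry about are actually absent, since the operator is $(1,i,0)$ and $\edelta(-i-h,0)=1$, so there is nothing to track; and the ``consistency'' concern you flag at the end is not an obstacle, since the corollary is a uniqueness (not reconstruction) statement — the values $\thetatilde_k(\xtilde)$ are simply read off from $\xtilde$, so overlapping reads necessarily agree. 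Your part~(2) is slightly less explicit than the paper's about why the ambient scalar $\alpha$ equals~$1$ — the paper observes that $j\in\dZl$ implies $e_{\overln}(\ell k,j)=1$ for $k\in\Zn$, so $\pitilde(\ytilde)=\alpha\pitilde(\xtilde)$ forces $\alpha=1$ — but the idea is present in your remark that $(1,0,j)$ acts trivially after $\pitilde$.
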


\begin{proof}
~

  \begin{enumerate}
    \item 
      Since $\pitilde_i( (\thetatilde_j(\xtilde))_{j \in \Zln}) =
      (\thetatilde_{i+\ell.j}(\xtilde))_{j \in \Zn}$, from $\left\{
      \pitilde_i(\xtilde) \right\}_{i \in \mathcal{S}}$ one can obtain the
      values $\left\{\thetatilde_j(\xtilde) \right\}_{j \in \mathcal{S}+\Zn}$.
      If $\mathcal{S}+\Zn=\Zln$ this shows that we can recover 
      $\xtilde=(\thetatilde_j(\xtilde))_{j \in \Zln}$.
\item 
  If $\pitilde(\ytilde)=\pitilde(\xtilde)$, then
  $p_{A_k}(\ytilde)-p_{A_k}(\xtilde) \in K_\pi$. So there exists $j \in
  \dZl$ and $\alpha \in \overline{k}^*$ such that
  $\ytilde=(\alpha,0,j).\xtilde$.  Hence $\thetatilde_i(\ytilde)=\alpha
  e_{\overln}(i,j) \thetatilde_i(\xtilde)$. Since
  $\pitilde(\xtilde)=\pitilde(\ytilde)$, $\alpha=1$. Moreover, as  $j \in
  \dZl$, $e_{\overln}(i+k,j)=e_{\overln}(i,j)$ if $k \in \Zn$ so that
  $\pitilde_i(\xtilde)=e_{\overln}(i,j) \pitilde_i(\ytilde)$.
\end{enumerate}
\end{proof}

\begin{example}
  \label{ex@section-torsion}
  \begin{itemize}
    \item 
  If $\ell$ is prime to $n$, the canonical mappings $\Zn \to \Zln$ and $\Zl
  \to \Zln$ induce an isomorphism $\Zn \times \Zl \iso \Zln$, and one
  can take $\mathcal{S}=\Zl$ in Corollary~\ref{cor@determination}.
\item 
  If $\ell$ is not prime to $n$, a possible choice for $\mathcal{S}$
  is 
  \[ \mathcal{S}= \{ \sum_{i \in [1..g]} \lambda_i e_i | \lambda_i \in
  [0..\ell-1] \}. \]
  \end{itemize}
\end{example}

\section{The addition relations}
\label{sec@addition}

In this section  we study the addition relations and introduce the notion
of addition chain on the affine cone of an abelian variety. This chain
addition will be our basic tool for our isogenies computation in
Section~\ref{sec@isogenies} and Vélu's like formula in
Section~\ref{sec@velu}.

In Section~\ref{subsec@evaluation} we introduce the concept of extended
commutator pairing. The importance of this pairing comes from the fact that
the isogenies we compute with our algorithm (see
Section~\ref{subsec@algo2}) correspond to subgroups that are isotropic for
this extended commutator pairing~\cite{0910.4668v1}. We explain in
Section~\ref{sec@pairings} how to use addition chains to compute this
pairing. In Section~\ref{subsec@pseudoadd} we prove in an algebraic
setting the Riemann relations, and we deduce from them the addition
relations. In Section~\ref{subsec@theta_and_addition} we use the results of
Section~\ref{subsec@action} to study the properties of the addition chain.

\subsection{Evaluation of algebraic theta functions at points of $\ell$-torsion}
\label{subsec@evaluation}
  
\glsadd{glo@zerotildeB}
\glsadd{glo@zerotildeA}
For the rest of this article, we suppose that we have fixed a $\zeroB \in
\pB^{-1}(0_{B_k})$. This give us a canonical way to fix an affine lift of
$0_{A_k}$:  we denote
$\zeroA$ the unique point in $p_{A_k}^{-1}(0_{A_k})$ such that $\zeroB =
\pitilde(\zeroA)$. 
We recall that the theta structure
$\ThetaA$ gives a section $s_{K(\pol)}:K(\pol) \to G(\pol)$. 
This means that the map
$x \in K(\pol) \mapsto s_{K(\pol)}(x).\zeroA \in \Atilde$
induces a section $K(\pol) \to \Atilde$ of the map $\pA: \Atilde \to A_k$.
We remark that the choice of $\zeroA \in p_{A_k}^{-1}(0_{A_k}) \subset \Atilde$ is equivalent to the choice of an evaluation isomorphism:
$\epsilon_0 : \pol(0) \simeq k$. 
For any $x
\in K(\pol)$, let $s_{\pol}(x)=(x,\psi_x)$, we define
$\thetatilde_i(x)=\epsilon_0(s_{\pol}(x).\theta_i)= \epsilon_0 \circ \psi_x^{-1} \circ \tau^*_{x}(\theta_i)$.  Then the section $K(\pol) \to \Atilde$ is given by:
\[ s_{K(\pol)}(x).\zeroA = (\thetatilde_i(x) )_{i \in \Zln}. \]

\glsadd{glo@Ptildei}
\glsadd{glo@Rtildei}
Thus we have a canonical way to fix an affine lift for any geometric
point in $K(\pol)$. For $i \in \Zln$, let $\tilde{P}_i=(1,i,0).\zeroA$, and
for $j \in \dZln$, let $\tilde{Q}_j=(1,0,j).\zeroA$. We also put
$\Rtilde_i= \pitilde(\Ptilde_i)=\pitilde_i(\zeroA)$, and
$R_i=\pB(\Rtilde_i)$. We remark that $\{R_i\}_{i \in \Zl}$ is the kernel
$K_{\pidual}$ of $\pidual$ which explains the primordial role the points
$\Rtilde_i$ will play for the rest of this paper.

\glsadd{glo@Bell}
\glsadd{glo@elliso}
More generally, we can define an affine lift
for any point of $B_k[\ell]$ by considering the isogeny given by $[\ell]$
rather than by $\pi$:
let $\bpol = [\ell]^* \ppol$ on $B_k$. As $\ppol$ is symmetric, we
have that $\bpol \simeq \ppol^{\ell^2}$~\cite{MR0282985} and so
$K(\bpol)$, the kernel of $\bpol$ is isomorphic to $K(\overline{\ell^2
n})$. Let $\Thetall$ be a theta structure on $(B_k, \bpol)$ compatible with
the theta structure $\ThetaB$ on $(B_k, \pol_0)$.
We note $\Bell$ the affine cone of $(B_k, \bpol)$, and
$\ellisotilde$ the morphism $\Bell \to \Btilde$ induced by $\elliso$.

We recall that there is a natural action of $G(\bpol)$ on $H^0(\bpol)$
which can be transported via $\Thetall$ to an action of $\Hll$ on
$H^0(\bpol)$. We note $\zeroBl$ the affine lift of the theta null point of
$\Bell$ such that $\ellisotilde \zeroBl = \zeroB$. We can then generalize
the definition of the $\Rtilde_i$ by looking at the points: \[ \{
\ellisotilde (1, i, j).\zeroBl | i,j \in \Zll \times \dZll \}. \] Let $H=\{
(1, i, j) | i, j \in \Zl \}$. $H$ is a commutative subgroup of $\Hll$ such
that $\ellisotilde \xtilde_1 = \ellisotilde \xtilde_2$ if and only if
$x_1=h.x_2$ where $h \in H$. We see that the section of a point in
$B_k[\ell]$ is only defined up to an $\ell^{th}$-root of unity. (This is
also the case for the lift $\Rtilde_i$ of $R_i$: if we change the theta
structure on $A_k$, it changes the $\Rtilde_i$ by an $\ell^{th}$-root of
unity. See also~Corollary~\ref{cor@determination} and
Example~\ref{ex@ltorsion}). The geometric meaning of these affine lifts is
explained in Section~\ref{subsec@trueltorsion}.

\glsadd{glo@el}
The polarization $\bpol$ induces a commutator pairing
$e_{\bpol}$ (\cite{MR34:4269}) on $K(\bpol)$ and as $\bpol$ descends to $\ppol$ via the
isogeny $[\ell]$, we know that $e_{\bpol}$ is trivial on $B_k[\ell]$.
For $x_1,x_2 \in B_k[\ell]$, let $x'_1, x'_2 \in B_k[\ell^2]$ be such
that $\ell.x'_i =x_i$ for $i=1,2$. We remark that $x'_1$ and $x'_2$
are defined up to an element of $B_k[\ell]$. As a consequence,
$e_{\bpol}(x'_1,x_2) =e_{\bpol}(x_1,x'_2)=e_{\bpol}(x'_1,x'_2)^{\ell}$, does not depend on the
choice of $x'_1$ and $x'_2$ and if we put
$e'_\ell(x_1,x_2)=e_{\bpol}(x'_1,x_2)$, we obtain a well defined bilinear
application $e'_\ell : B_k[\ell n] \times B_k[\ell n] \rightarrow
\overline{k}$. We call $e'_\ell$ the extended commutator pairing. It extends the
commutator pairing $e_{\pol_0}$ since if $x,y \in B_k[n]$ we have
$e'_\ell(x,y)=e_{\pol_0}(x,y)^{\ell}$. When we speak of isotropic points in
$B_k[\ell]$, we always refer to isotropic points with respect to $e'_\ell$.
We quote the following important result from~\cite{0910.4668v1}: the
modular points $\phi_1^{-1}(b_i)_{i \in \Zn}$ that we compute in
Section~\ref{subsec@algo2} correspond to isogenies whose kernel are
isotropic for $e'_\ell$.


\subsection{The general Riemann relations}
\label{subsec@pseudoadd}

The Riemann relations~\eqref{eq@equations_Mn} for $\Mln$ and the Riemann
equations~\eqref{eq@equations_Ak} for $A_k$ are all particular case of more
general Riemann relations, which we will use to get the addition relations
on $A_k$. An analytic proof of these relations can be found
in~\cite{MR85h:14026}.

\begin{theorem}[Generalized Riemann Relations]
  \label{th_riemann_rel}
  Let $x_1,y_1,u_1,v_1,z \in A_k$ be such that $x_1+y_1+u_1+v_1=2z$. 
  Let $x_2 =z-x_1$, $y_2=z-y_1$, $u_2=z-u_1$ and $v_2=z-v_1$.
  Then there exist 
  $\tilde{x}_1 \in \pA^{-1}(x_1)$,
  $\tilde{y}_1 \in \pA^{-1}(y_1)$,
  $\tilde{u}_1 \in \pA^{-1}(u_1)$,
  $\tilde{v}_1 \in \pA^{-1}(v_1)$,
  $\tilde{x}_2 \in \pA^{-1}(x_2)$,
  $\tilde{y}_2 \in \pA^{-1}(y_2)$,
  $\tilde{u}_2 \in \pA^{-1}(u_2)$,
  $\tilde{v}_2 \in \pA^{-1}(v_2)$ that satisfy the following relations:
  for any $i,j,k,l,m \in \Zln$ such that $i+j+k+l=2m$, let $i'=m-i$,
  $j'=m-j$, $k'=m-k$ and $l'=m-l$, then for all $\chi \in \dZtwo$, we have
\begin{multline}
  \big(\sum_{t \in \Ztwo} \chi(t) \vartheta_{i+t}(\tilde{x}_1) \vartheta_{j+t}(\tilde{y}_1)\big).\big(\sum_{t \in \Ztwo} \chi(t)
  \theta_{k+t}(\tilde{u}_1) \theta_{l+t}(\tilde{v}_1)\big)= \\
  \big(\sum_{t \in \Ztwo} \chi(t) \vartheta_{i'+t}(\tilde{x}_2) \vartheta_{j'+t}(\tilde{y}_2)\big).\big(\sum_{t \in \Ztwo} \chi(t)
  \theta_{k'+t}(\tilde{u}_2) \theta_{l'+t}(\tilde{v}_2)\big). 
\label{eq_riemann_rel}
\end{multline}
\end{theorem}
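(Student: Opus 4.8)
The plan is to prove the Generalized Riemann Relations (Theorem~\ref{th_riemann_rel}) by reducing them to the already-established Riemann equations~\eqref{eq@equations_Ak} for the theta null point via the classical ``doubling/addition'' trick: the relations are really the multiplication formula for theta functions, i.e. they express the pullback of theta functions under the isogeny $(x,y,u,v)\mapsto(x+y,x-y,\dots)$ or, more fundamentally, under $[2]$. First I would fix, once and for all, a choice of affine lifts compatible with the group law; concretely, using the section $s_{K(\pol)}$ and the affine action $\rhopola^*$ of the theta group (Section~\ref{subsec@thetastruct}), I would lift the five points $x_1,y_1,u_1,v_1,z$ to the affine cone and \emph{define} $\tilde x_2,\tilde y_2,\tilde u_2,\tilde v_2$ by the relation $\tilde x_2 = $ (the lift of $z-x_1$ determined by $\tilde z$ and $\tilde x_1$), exploiting that $\Atilde$ carries a well-behaved ``difference'' operation coming from the biextension structure. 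The point of the theorem's phrasing ``there exist lifts such that'' is precisely that one does not get the relations for \emph{arbitrary} lifts, only for those that are coherent with respect to the group structure on the biextension.

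The core computation is the analytic identity behind the isogeny theorem for $[2]$: if $\Thetastruct{2\delta}$ denotes a theta structure of level $2\delta$ obtained from $\ThetaA$ by the $\ell$-adic ($2$-adic here) compatibility, then $\theta_i^{\ThetaA}\otimes\theta_j^{\ThetaA}$, as a section of $\pol\boxtimes\pol$ descended along the antidiagonal, expands in the $\theta^{\Thetastruct{2\delta}}$ basis with coefficients indexed by $\Ztwo$, and the product of two such expansions evaluated at coherent lifts gives exactly the left- and right-hand sides of~\eqref{eq_riemann_rel}. So the key steps, in order, are: (1) recall/set up the level-$2\delta$ theta structure compatible with $\ThetaA$ and the resulting section $s$ on the affine cone of $(A_k,\pol^2)$ (this is the isogeny theorem, Proposition~\ref{prop@iso}, applied with $\delta'=\overtwo$, reading it \emph{backwards}); (2) write the multiplication formula $\vartheta_{i+t}(\tilde x)\vartheta_{j+t}(\tilde y)$ in terms of the level-$2\delta$ coordinates of $\tilde x + \tilde y$ and $\tilde x - \tilde y$ on the affine cone — this is where the constraint $i+j+k+l=2m$ and the auxiliary index $m$ enter, matching the condition $x_1+y_1+u_1+v_1=2z$; (3) apply the already-known relations~\eqref{eq@equations_Ak} (or~\eqref{eq@equations_Mn}) for the level-$2\delta$ theta null point to these expanded sections; (4) unwind back to level $\delta$ and read off~\eqref{eq_riemann_rel}, checking that the characters $\chi\in\dZtwo$ on the two sides correspond under the construction.

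The main obstacle I expect is the bookkeeping of the affine lifts and the scalar factors: the relations~\eqref{eq_riemann_rel} hold \emph{on the nose} (no undetermined $\lambda$), so one must be careful that the lifts $\tilde x_i,\tilde y_i,\dots$ are not chosen independently but are pinned down by a single coherent choice — essentially a trivialization of the relevant biextension restricted to the finite configuration of points $\{x_1,y_1,u_1,v_1,z\}$ and their pairwise ``sums/differences'' along the diagonal. Making the existence statement precise amounts to showing this trivialization exists, which is where one invokes that $\pA:\Atilde\to A_k$ together with the theta-group action gives a section over any subgroup, extended here to the sub-\emph{configuration} generated by these points under $\pm$ and averaging by $z$. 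Once the lifts are correctly normalized, steps (2)--(4) are the standard manipulation of the Riemann theta identity and are essentially formal; the reference~\cite{MR85h:14026} supplies the analytic incarnation, and over an arbitrary field of odd characteristic (or the ordinary case) one transfers it either by the Lefschetz principle for the universal family over $\Mstruct{2\delta}$ or directly by the algebraic descent argument of Mumford~\cite{MR34:4269}, since both sides are sections of the same line bundle on $A_k^4$ that agree on the dense open locus where the analytic identity applies.
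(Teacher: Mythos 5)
Your high-level intuition is correct: the proof does go through the multiplication formula coming from an isogeny theorem applied to a doubling-type map, and the paper's proof indeed starts by reducing to this. But two of your concrete steps would not work as stated.

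First, your step~(1) invokes Proposition~\ref{prop@iso} with $\delta'=\overtwo$, ``read backwards.'' That proposition is the isogeny theorem for a type-$1$ isogeny $A_k \to A_k/K$ with $K \subset \Kpolb$, and it is not the isogeny that is relevant here. The paper instead applies Mumford's isogeny theorem to $\xi: A_k\times A_k \to A_k\times A_k$, $(x,y)\mapsto(x+y,x-y)$, on $(A_k\times A_k,\,\Mpol)$ with $\Mpol = \pullb{p_1}\pol\tens\pullb{p_2}\pol$ and $\pullb{\xi}\Mpol = \Mpol^2$. This is a genuinely different map from your $\delta'=\overtwo$ type-$1$ isogeny, and it is what produces the formula
\[(\theta^{\pol}_i \tens \theta^{\pol}_j)(\xi(x,y)) = \lambda\!\!\sum_{\substack{u+v=i \\ u-v=j}}(\theta_u^{\pol^2} \tens \theta_v^{\pol^2})(x,y),\]
with an \emph{explicit}, single constant $\lambda$ that the paper normalizes to $1$; this normalization alone accounts for the ``there exist lifts'' clause. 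Your biextension/trivialization framing is not wrong but is much heavier machinery than is needed.

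Second, and more seriously, your step~(3) is backwards. After the multiplication formula, you propose to apply the Riemann relations~\eqref{eq@equations_Ak} or~\eqref{eq@equations_Mn} at level $2\delta$ to the expanded sections. That is not what happens, and as the paper's proof begins by noting, the Riemann null-point equations are the \emph{special case} $x_1=y_1=u_1=v_1=0_A$ of the Generalized Riemann Relations — a consequence, not a tool. The actual argument is that after expansion the left side of~\eqref{eq_riemann_rel} factors as a product of \emph{four} separate sums $\big(\sum_{t\in\Ztwo}\chi(t)\theta_{a+t}^{\pol^2}(x_0)\big)$, one in each of $x_0,y_0,u_0,v_0$ (where $x_1 = x_0+y_0$, $y_1 = x_0-y_0$, $u_1 = u_0+v_0$, $v_1 = u_0-v_0$), and this four-fold product is manifestly symmetric under permutation of its four factors. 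Regrouping the same four factors gives the right side, after the index substitution that turns the condition $x_1+y_1+u_1+v_1 = 2z$ into the definition $z = x_0+u_0$. No level-$2\delta$ Riemann equations are applied and there is no ``unwinding''; the symmetry of the product is the entire point, and it is the step your outline leaves out.
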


\begin{proof}
  If $x=y=u=v=0_A$, the preceding result gives the algebraic Riemann
  relations, a proof of these relations can be found in~\cite{MR34:4269}.
  We just need to adapt the proof of Mumford for the general case.

  Let $p_1$ and $p_2$ be the first and second projections from $A_k
  \times A_k$ to $A_k$.
  Let $\Mpol=\pullb{p_1}(\pol) \prodtens \pullb{p_2}(\pol)$. The theta
  structure $\ThetaA$ induces a theta structure $\Theta_{A_k \times A_k}$ such
  that for $(i,j) \in \Zln \times \Zln$ we have $\theta^{ \Theta_{A\times
  A}}_{i,j}=\theta^{\ThetaA}_i \prodtens \theta^{\ThetaA}_j$. (see \cite[Lemma~1 p.
  323]{MR34:4269}.) Consider the isogeny $\xi: A_k \times A_k \to A_k \times
  A_k, (x,y) \mapsto (x+y,x-y)$.  We have $\pullb{\xi}(\Mpol)=\Mpol^2$.
  Since $\ThetaA$ is a symmetric theta structure $\ThetaA$ it induces a
  theta structure on $\pol^2$ and on $\Mpol^2$.
  One can check that this theta structure is compatible with the
  isogeny $\xi$.  Applying the isogeny theorem
  (see~\cite[p324]{MR34:4269}), we obtain that there exists $\lambda
  \in \overline{k}^*$ such that for all $i,j \in \Zln$ and $x,y \in
  A_k(\overline{k})$:
  \begin{equation}
  (\theta^{\pol}_i \tens \theta^{\pol}_j) (\xi(x,y)) = \lambda 
  \sum_{\substack{u,v \in \Zstruct{\overline{2ln}} \\ u+v=i \\ u-v=j}}(\theta_u^{\pol^2} \tens
  \theta_v^{\pol^2})(x,y) 
    \label{th_add_eq1}
  \end{equation}

In the preceding equation, in order to evaluate the sections of
$\Mpol$ or $\Mpol^2$ at a geometric point of $x\in A_k \times A_k$ we just
choose any isomorphism between $\Mpol_x$ or $\Mpol^2_x$ and
$\mathscr{O}_{A_k \times A_k,x}$ where $\mathscr{O}_{A_k \times A_k}$
is the structural sheaf of $A_k \times A_k$. To simplify the
notations, we suppose in the following that $\lambda =1$.
  
  Using equation~\eqref{th_add_eq1} we compute for all $i,j \in
  \Zstruct{\overline{2\ell n}}$ which are congruent modulo $\Zln$
  and $x,y \in A_k(\overline{k})$:
  \begin{align*}
    \label{th_add_eq2}
    \sum_{t \in \Ztwo} \chi(t) \theta^{\pol}_{i+j+t}(x+y) \theta^{\pol}_{i-j+t}(x-y) 
     &= 
     \sum_{\substack{t \in \Ztwo \\  u,v \in \Zstruct{\overline{2ln}} \\  u+v=i+j+t \\  u-v=i-j+t}}\chi(t)\theta_u^{\pol^2}(x) \theta_v^{\pol^2}(y) \\
     &=
     \sum_{t_1,t_2 \in \Ztwo}\chi(t_1+t_2)\theta_{i+t_1}^{\pol^2}(x) \theta_{j+t_2}^{\pol^2}(y) \\
     &= 
     \left(\sum_{t \in \Ztwo}\chi(t)\theta_{i+t}^{\pol^2}(x)\right) \cdot
     \left(\sum_{t \in \Ztwo}\chi(t)\theta_{j+t}^{\pol^2}(y)\right)
  \end{align*}
So we have:
  \begin{multline}
\big(\sum_{t \in \Ztwo} \chi(t) \theta^\pol_{i+j+t}(x+y) \theta^\pol_{i-j+t}(x-y)\big).
\big(\sum_{t \in \Ztwo} \chi(t) \theta^\pol_{k+l+t}(u+v) \theta^\pol_{k-l+t}(u-v)\big)= \\
     \big(\sum_{t \in \Ztwo}\chi(t)\theta_{i+t}^{\pol^2}(x)\big) \cdot
     \big(\sum_{t \in \Ztwo}\chi(t)\theta_{j+t}^{\pol^2}(y)\big) \cdot
     \big(\sum_{t \in \Ztwo}\chi(t)\theta_{k+t}^{\pol^2}(u)\big) \cdot
     \big(\sum_{t \in \Ztwo}\chi(t)\theta_{l+t}^{\pol^2}(v)\big) = \\
\big(\sum_{t \in \Ztwo} \chi(t) \theta^\pol_{i+l+t}(x+v) \theta^\pol_{i-l+t}(x-v)\big).
\big(\sum_{t \in \Ztwo} \chi(t) \theta^\pol_{k+j+t}(u+y) \theta^\pol_{k-j+t}(u-y)\big)
    \label{th_add_eq3}
  \end{multline}
  
  Now if we let $x=x_0+y_0$, $y=x_0-y_0$, $u=u_0+v_0$ and $v=u_0-v_0$, we
  have $x+y+u+v=2 (x_0+u_0)$ so we can choose $z=x_0+u_0$, so that
  $z-x=u_0-y_0$, $z-y=u_0+y_0$, $z-u=x_0-v_0$, $z-v=x_0+v_0$.
  By doing the same variable change for $i,j,k,l$ we see that
  the theorem is just a restatement of Equation~\eqref{th_add_eq3}.
  (see~\cite[p334]{MR34:4269}).
\end{proof}

From the generalized Riemann relations it is possible to derive addition
relations. First remark that since the theta structure $\ThetaA$ is
symmetric there exists a constant $\lambda \in \overline{k}^*$ such
that for all $i \in \Zln$, $\theta_i(-x) = \lambda.\theta_{-i}(x)$. 
In
particular if $\xtilde \in \Atilde$, we put $- \xtilde
=(\thetatilde_{-i}(\xtilde))_{i \in \Zln}$.
\glsadd{glo@chainadd}
\begin{theorem}[Addition Formulas]
  \label{th@addition}
  Let $x, y \in A_k(\overline{k})$ and suppose that we are given $\xtilde \in
  \pA^{-1}(x)$, $\ytilde \in \pA^{-1}(y)$, $\tilde{x-y} \in
  \pA^{-1}(x-y)$, then there is a unique point $\tilde{x+y} \in
  \tilde{A}_k(\overline{k})$ verifying for $i,j,k,l \in
  \Zln$ 
\begin{multline}
  \big(\sum_{t \in \Ztwo} \chi(t) \theta_{i+t}(\tilde{x+y}) \theta_{j+t}(\tilde{x-y})\big).\big(\sum_{t \in \Ztwo} \chi(t)
\theta_{k+t}(\zeroA) \theta_{l+t}(\zeroA)\big)= \\
\big(\sum_{t \in \Ztwo} \chi(t) \vartheta_{-i'+t}(\tilde{y}) \vartheta_{j'+t}(\tilde{y})\big).\big(\sum_{t \in \Ztwo} \chi(t)
\theta_{k'+t}(\tilde{x}) \theta_{l'+t}(\tilde{x})\big), 
\label{eq@addition}
\end{multline}
and we have $\pA(\tilde{x+y})=x+y$.

  Thus the addition law on $A_k$ extends to a pseudo addition law on
  $\Atilde$; we call it an addition chain and we note
  $\tilde{x+y}=\chaineadd(\tilde{x},\tilde{y},\tilde{x-y})$.
\end{theorem}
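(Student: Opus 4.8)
The plan is to derive the statement from the Generalized Riemann Relations (Theorem~\ref{th_riemann_rel}) by specializing the eight points occurring there, the only delicate issue being the affine normalization of the lifts. Concretely, I would apply Theorem~\ref{th_riemann_rel} with $x_1=x+y$, $y_1=x-y$ and $u_1=v_1=0_{A_k}$; then $x_1+y_1+u_1+v_1=2x$, so one may take $z=x$, whence $x_2=-y$, $y_2=y$ and $u_2=v_2=x$. The theorem then produces affine lifts $\tilde x_1^{0},\tilde y_1^{0},\tilde u_1^{0},\tilde v_1^{0},\tilde x_2^{0},\tilde y_2^{0},\tilde u_2^{0},\tilde v_2^{0}$, lying above $x+y,\,x-y,\,0,\,0,\,-y,\,y,\,x,\,x$ respectively, that satisfy~\eqref{eq_riemann_rel}.

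Next I would renormalize. For each fixed tuple $i,j,k,l,m$ with $i+j+k+l=2m$ and each $\chi\in\dZtwo$, both sides of~\eqref{eq_riemann_rel} are of degree one separately in each of the eight lifts (the $\theta$-coordinates being linear on the affine cones), so replacing the lifts by $a\tilde x_1^{0},b\tilde y_1^{0},c\tilde u_1^{0},d\tilde v_1^{0},e\tilde x_2^{0},f\tilde y_2^{0},g\tilde u_2^{0},h\tilde v_2^{0}$ with $a,\dots,h\in\kbar^{*}$ preserves~\eqref{eq_riemann_rel} as soon as $abcd=efgh$. I would use seven of these scalars to impose $\tilde y_1=\widetilde{x-y}$, $\tilde u_1=\tilde v_1=\zeroA$, $\tilde y_2=\tilde y$, $\tilde u_2=\tilde v_2=\tilde x$ and $\tilde x_2=-\tilde y$, where $-\tilde y=(\thetatilde_{-i}(\tilde y))_{i\in\Zln}$ as in the convention above, which lies above $-y$ precisely because $\ThetaA$ is symmetric. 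The remaining scalar, which multiplies $\tilde x_1^{0}$, is then uniquely fixed by $abcd=efgh$, and I define $\widetilde{x+y}$ to be the resulting lift, so that $\pA(\widetilde{x+y})=x_1=x+y$.

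It then remains to observe that, after this normalization, the specialization of~\eqref{eq_riemann_rel} is literally~\eqref{eq@addition}: the factors built from $\zeroA$, $\tilde x$ and $\tilde y$ transcribe verbatim, and the only point to check is that $\theta_{i'+t}(\tilde x_2)=\theta_{-i'+t}(\tilde y)$ for $t\in\Ztwo$. This holds because $-t=t$ on $\Ztwo=\Z^{g}/2\Z^{g}$, so the $(i'+t)$-th coordinate of $-\tilde y$ equals $\thetatilde_{-(i'+t)}(\tilde y)=\thetatilde_{-i'-t}(\tilde y)=\thetatilde_{-i'+t}(\tilde y)$. This settles existence of $\widetilde{x+y}$ and the identity $\pA(\widetilde{x+y})=x+y$.

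For uniqueness, suppose $\tilde w\in\tilde A_k(\kbar)$ also satisfies~\eqref{eq@addition}, and set $\psi_i=\theta_i(\tilde w)-\theta_i(\widetilde{x+y})$. Subtracting the two instances of~\eqref{eq@addition}, whose right-hand sides agree, gives $\bigl(\sum_{t\in\Ztwo}\chi(t)\psi_{i+t}\theta_{j+t}(\widetilde{x-y})\bigr)\bigl(\sum_{t\in\Ztwo}\chi(t)\theta_{k+t}(\zeroA)\theta_{l+t}(\zeroA)\bigr)=0$ for all admissible $i,j,k,l$ and all $\chi$. Since $\zeroA$ is a valid affine lift of $0_{A_k}$ and $\widetilde{x-y}$ a valid affine lift of $x-y$, neither is a degenerate theta point — here one uses that $4\mid n$ and the Riemann equations of Theorem~\ref{prop@riemann} — so for appropriate admissible $k,l$ the second factor is nonzero for every $\chi$; hence the first factor vanishes for all $\chi$, and Fourier inversion over $\dZtwo$ forces $\psi_i\,\theta_j(\widetilde{x-y})=0$ for all $i,j$, whence $\psi\equiv 0$ and $\tilde w=\widetilde{x+y}$. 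I expect the main obstacle to be the renormalization bookkeeping of the second paragraph — verifying that prescribing the seven lifts really leaves exactly one free scalar, the one attached to $\tilde x_1^{0}$, and that the convention $-\tilde y=(\thetatilde_{-i}(\tilde y))_i$ is consistent with the lift of $-y$ delivered by Theorem~\ref{th_riemann_rel}; the non-degeneracy input needed for uniqueness, which is why $4\mid n$ is assumed, is a secondary technical point.
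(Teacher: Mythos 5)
Your proof takes essentially the same route as the paper's: both specialize Theorem~\ref{th_riemann_rel} to the quadruple $x+y$, $x-y$, $0_{A_k}$, $0_{A_k}$ with $z=x$, exploit multilinearity of the Riemann relations in the eight affine lifts to normalize them to the prescribed values, and then derive uniqueness from the non-degeneracy of $\zeroA$ and of $\tilde{x-y}$; the paper passes to the $\utilde_{i,\chi}$ coordinates of equation~\eqref{eq@addition_trans} and cites Mumford's non-vanishing of $\utilde_{i+(\alpha,0)}(\zeroA)$, whereas you stay in raw $\theta$-coordinates and perform the Fourier inversion over $\dZtwo$ explicitly, which is the same argument in a different dress. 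One phrasing to tighten: ``for appropriate admissible $k,l$ the second factor is nonzero for every $\chi$'' reads as a single pair $(k,l)$ working uniformly in $\chi$, which is stronger than what the non-degeneracy input supplies and stronger than what you need --- it suffices (and is what the $\utilde$-formulation gives) that for each fixed $(i,j,\chi)$ there exist admissible $(k,l)$ making that factor nonzero.
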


\begin{proof}
  We apply the Riemann relations~\eqref{eq_riemann_rel} to
  $x+y,x-y,0_A,0_A$. We have $2x=(x+y)+(x-y)+0_A+0_A$, $-y=x-(x+y)$,
  $y=x-(x-y)$, $x=x-0_A$, $x=x-0_A$ so Theorem~\ref{th_riemann_rel} shows
  that there exist a point $\tilde{x+y}\in \tilde{A}_k(\overline{k})$ satisfying the addition
  relations~\eqref{eq@addition}.

  It remains to show that this point is unique. But first we reformulate
  the addition formulas (see~\cite[p334]{MR34:4269}). Let $H=\Zln \times
  \dZtwo$, and for $(i,\chi) \in H$ define 
  \[\utilde_{i,\chi}(\xtilde)=\sum_{t \in \Ztwo} \chi(t)
  \thetatilde_{i+t}(\xtilde). \]
  Then we have for all $i,j,k,l,m \in H$ such that $2m=i+j+k+l$
  \begin{multline}
    \utilde_i(\tilde{x+y})\utilde_j(\tilde{x-y})\utilde_k(\zeroA)\utilde_l(\zeroA)= \\
    \frac{1}{2^{2g}} \sum_{\xi \in H, 2 \xi=\in \Ztwo \times {0}}
    (m_2+\xi_2)(2\xi_1) \utilde_{i-m+\xi}(\ytilde)
    \utilde_{m-j+\xi}(\ytilde) \utilde_{m-k+\xi}(\xtilde)
    \utilde_{m-l+\xi}(\xtilde)
  \label{eq@addition_trans}
  \end{multline}

  It is easy to see that $(\thetatilde_i(\tilde{x}))_{i \in \Zln}$, is determined by
  $(\utilde_i(\tilde{x}))_{i \in H}$. That means there is a $j \in H$ such that
  $\utilde_j(\tilde{x-y}) \ne 0$ otherwise we would have
  $\thetatilde_i(\tilde{x-y})=0$ for all $i
  \in \Zln$. Now for all $i \in H$, we need to find $k,l \in H$ such that
  $i+j+k+l=2m$ and $\utilde_k(\tilde{0}) \ne 0$, $\utilde_l(\tilde{0}) \ne 0$. In that case
  equation~\eqref{eq@addition_trans} allows us to compute
  $\utilde_i(\tilde{x+y})\utilde_j(\tilde{x-y})\utilde_k(0)\utilde_l(0)$
  from $(\utilde_i(\tilde{x}))_{i \in H}$ and
  $(\utilde_i(\tilde{y}))_{i \in H}$, so we can obtain
  $\utilde_i(\tilde{x+y})$.

  In \cite[p. 339]{MR34:4269}, Mumford prove that for any $i \in H$,
  there is an $\alpha \in 2 \Zln$ such that $\utilde_{i+(\alpha,0)}(0) \neq 0$.  
  So we can choose $k=i$, $l=j$, we have $i+j+k+l=2m$, and if necessary we add
  an element of $2 \Zln$ to $k$ and $l$.
\end{proof}

\begin{algorithm}[Addition chain]
  \begin{description}
    \item[Input]
  $\xtilde, \ytilde$ and $\tilde{x-y}$ such that
  $\pA(\xtilde)-\pA(\ytilde)=\pA(\tilde{x-y})$.
\item[Output] 
  $\tilde{x+y}=\chaineadd(\xtilde,\ytilde,\tilde{x-y})$.
\item[Step 1] 
  For all $i \in \Zln$, $\chi \in \dZtwo$ and $X \in
  \{\tilde{x+y},\tilde{x}, \tilde{y}, \zeroA\}$ compute
  \[\utilde_{i,\chi}(X)=\sum_{t \in \Ztwo} \chi(t) \thetatilde_{i+t}(X). \]
\item[Step 2] 
  For all $i \in \Zln$ choose $j,k,l \in \Zln$ such that $i+j+k+l=2m$, 
  $\utilde_j(\tilde{x-y}) \ne 0$, $\utilde_k(\zeroA) \ne 0$, 
  $\utilde_l(\zeroA) \ne 0$ and compute
  \begin{multline}
    \utilde_i(\tilde{x+y})=\frac{1}{2^{2g}\utilde_j(\tilde{x-y})\utilde_k(\zeroA)\utilde_l(\zeroA)} \\
    \sum_{\xi \in H, 2 \xi=\in \Ztwo \times {0}}
    (m_2+\xi_2)(2\xi_1) \utilde_{i-m+\xi}(\ytilde)
    \utilde_{m-j+\xi}(\ytilde) \utilde_{m-k+\xi}(\xtilde)
    \utilde_{m-l+\xi}(\xtilde).
    \label{eq@addition_class}
  \end{multline}
\item[Step 3] For all $i \in \Zln$ output
  \[ \thetatilde_{i}(\tilde{x+y})=\frac{1}{2^g} \sum_{\xi \in \dZtwo}
  \utilde_{i, \chi}(\tilde{x+y}). \]
  \end{description}
\end{algorithm}

\begin{complexity}
  We use a linear transformation between the $\thetatilde$ coordinates and the
  $\utilde$ coordinates in the Steps~1 and~3 because we have seen in the
  proof of Theorem~\ref{th@addition} that the latter are more suited for
  additions.

  As $\utilde_{i+t,\chi}=\chi(t) \utilde_{i,\chi}$
  we only need to consider $(\ell n)^g$ coordinates and the
  linear transformation between $\utilde$ and $\thetatilde$ 
  can be computed at the cost of $(2 n
  \ell)^g$ additions in $k$.
  We also have $\utilde_{i, \chi}(-\xtilde)=\utilde_{-i, \chi}(\xtilde)$.

  Using the fact that for $t \in \Ztwo$ the right hand terms
  of~\eqref{eq@addition_class} corresponding to $\xi=(\xi_1+t,\xi_2)$ and
  to $\xi=(\xi_1,\xi_2)$ are the same up to a sign, one can compute the
  left hand side of~\eqref{eq@addition_class} with $4\cdot 4^g$
  multiplications and $4^g$ additions in $k$. In total one can compute an
  addition chain in $4.(4\ell n)^g$ multiplications, $(4\ell n)^g$
  additions and $(\ell n)^g$ divisions in $k$. We remark that in order to
  compute several additions using the same point, there is no need to
  convert back to the $\thetatilde$ at each step so we only need to perform
  Step~2.

  The addition chain formula is a basic step for our isogenies
  computations, and in the sequel we consider it as our basic unit for
  the complexity
  analysis. In some cases it is possible to greatly speed up this computation. See
  for instance \cite{gaudry2007fast} which uses the duplication formula
  between theta functions to speed up the addition chain of level two (in
  genus $1$ and $2$). See
  also Section~\ref{subsec@compression} where it is explained how to
  use isogenies to compute the addition chain for a general level by using
  only addition chains of level two, so that we can
  use the speed up of \cite{gaudry2007fast} in every level.
\end{complexity}

\begin{remark}
  The addition formulas can also be used to compute the usual addition law
  in $A_k$  by choosing $j=0$ in
  Equation~\eqref{eq@addition_class} for every $i$.
 
  It is also possible to use directly  the $\theta$ coordinates as
  follows:
  if $x, y \in A_k(\overline{k})$ and $i \in \Zln$, for every $\chi \in \dZtwo$, one can choose $k,l \in \Zln$  such that $i+k+l$ is divisible by $2$ and

\[ \big(\sum_{t \in \Ztwo} \chi(t) \theta_{k+t}(0) \theta_{l+t}(0)\big)
\neq 0. \] 
So one can use~\eqref{eq@addition} to compute 
 \[ a_{i,\chi}:= \big(\sum_{t \in \Ztwo} \chi(t) \vartheta_{i+t}(P+Q)
 \vartheta_{0+t}(P-Q)\big), \]
 and recover $\theta_{i}(P+Q)$ by inversing a matrix. For
 instance we have $\displaystyle\theta_i(x)=\frac{1}{2^g} \sum_{\chi \in
 \dZtwo} a_{\chi,i}$
\end{remark}

The addition chain law on $\Atilde$ induces a multiplication by a
scalar law which reduces via $p_{A_k}$ to the
usual multiplication by a scalar law deduced from the group structure
of $A_k$. Let $\xtilde, \ytilde \in \Atilde$ and
$\tilde{x+y} \in p_{A_k}^{-1}(x+y)$, then we can compute
$\tilde{2x+y}:=\chaineadd(\xytilde, \xtilde, \ytilde)$. More generally 
there is a recursive algorithm to compute for every $m \geq
2$:
\[ \tilde{mx+y}:=\chaineadd(\tilde{(m-1)x+y}, \xtilde, \tilde{(m-2)x+y}) \] 
We put $\chaineaddmult(n,\xytilde, \xtilde,
\ytilde):= \tilde{mx+y}$ and define
$\chainemult(m,\xtilde):=\chaineaddmult(m, \xtilde, \xtilde,\zeroA)$. 
We have $p_{A_k}(\chainemult(m,\xtilde))=m. p_{A_k}(\xtilde)$.
We call $\chaineaddmult$ a multiplication chain.
\glsadd{glo@chainmult}

\begin{algorithm}[Multiplication chain]
  \begin{description}
    \item[Input] $m \in \N$, $\xytilde, \xtilde, \ytilde \in \Atilde$.
    \item[Output] $\chaineaddmult(m,\xytilde,\xtilde,\ytilde)$.
    \item[Step 1] Compute the binary decomposition of 
       $m:=\sum_{i=0}^I b_i 2^i$.
       Set $m':=0$, 
       $\mathtt{xy}_0:=\ytilde$,
       $\mathtt{xy}_{-1}:=\chaineadd(\ytilde,-\xtilde,\xytilde)$,
       $\mathtt{x}_0:=\zeroA$ and $\mathtt{x}_1:=\xtilde$.
      \item[Step 2] For i in $[I .. 0]$ do\\
        If $b_i=0$ then compute
        \begin{gather*}
          \mathtt{x}_{2m'}:=\chaineadd(\mathtt{x}_{m'},\mathtt{x}_{m'},\mathtt{x}_0) \\
        \mathtt{x}_{2m'+1}:=\chaineadd(\mathtt{x}_{m'+1},\mathtt{x}_{m'},\mathtt{x}_1) \\
        \mathtt{xy}_{2m'}:=\chaineadd(\mathtt{xy}_{m'},\mathtt{x}_{m'},\mathtt{xy}_0)\\
        m':=2m'.
      \end{gather*}
      Else compute
        \begin{gather*}
        \mathtt{x}_{2m'+1}:=\chaineadd(\mathtt{x}_{m'+1},\mathtt{x}_{m'},\mathtt{x}_1) \\
          \mathtt{x}_{2m'+2}:=\chaineadd(\mathtt{x}_{m'+1},\mathtt{x}_{m'+1},\mathtt{x}_0) \\
          \mathtt{xy}_{2m'+1}:=\chaineadd(\mathtt{xy}_{m'},\mathtt{x}_{m'},\mathtt{xy}_{-1})\\
        m':=2m'+1.
      \end{gather*}
    \item[Step 3] Output $\mathtt{xy}_m$.
  \end{description}
\end{algorithm}
\begin{algocomplexity}
  In Corollary~\ref{cor@fastmul} we show that multiplication chains are
  associative, so we can use a Lucas sequence to compute them. In order to
  do as few division as possible, we use a Montgomery ladder for our Lucas
  sequence, hence the algorithm.

  We see that a multiplication chain requires $O(\log(m))$ addition chains.
\end{algocomplexity}

\begin{lemma}
  For $\lambda_x, \lambda_y, \lambda_{x-y} \in \overline{k}^*$ and
  $\tilde{x},\tilde{y} \in A_k(\overline{k})$, we have:
  \begin{gather}
    \label{eq@homogen1}
    \chaineadd(\lambda_x \xtilde, \lambda_y \ytilde, \lambda_{x-y}
    \xymtilde) = \frac{\lambda_x^2 \lambda_y^2}{\lambda_{x-y}}
    \chaineadd(\xtilde, \ytilde, \xymtilde), \\
    \label{eq@homogen2}
    \chaineaddmult(n, \lambda_{x+y} \xytilde, \lambda_x \xtilde, \lambda_y
    \ytilde) = \frac{\lambda_x^{n(n-1)}
    \lambda_{x+y}^{n}}{\lambda_{y}^{n-1}}
    \chaineaddmult(n, \xytilde,  \xtilde, \ytilde), \\
    \chainemult(n, \lambda_x \xtilde) = 
    \lambda_x^{n^2} \chainemult(n, \xtilde). 
    \label{eq@homogen3}
  \end{gather}
  \label{lem@eqhomogen}
\end{lemma}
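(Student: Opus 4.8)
The plan is to prove the three homogeneity formulas~\eqref{eq@homogen1}, \eqref{eq@homogen2} and~\eqref{eq@homogen3} by tracking how the defining relations of the addition chain transform under rescaling of the affine lifts. The key observation is that $\chaineadd(\xtilde,\ytilde,\xymtilde)$ is characterized (Theorem~\ref{th@addition}, equivalently equation~\eqref{eq@addition_trans}) as the unique point $\xytilde$ satisfying a system of equations that is bilinear in the $\utilde$-coordinates of $\xtilde$ and of $\ytilde$ and linear in the $\utilde$-coordinates of $\xymtilde$ on the left-hand side (with $\zeroA$ held fixed). Since the linear transformation $\thetatilde \leftrightarrow \utilde$ is itself linear, scaling $\xtilde$ by $\lambda_x$ scales every $\utilde_i(\xtilde)$ by $\lambda_x$, and similarly for the others.

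First I would establish~\eqref{eq@homogen1}. Writing out~\eqref{eq@addition_trans} with $\xtilde,\ytilde,\xymtilde$ replaced by $\lambda_x\xtilde, \lambda_y\ytilde, \lambda_{x-y}\xymtilde$: on the right-hand side each term is a product $\utilde_{i-m+\xi}(\ytilde)\utilde_{m-j+\xi}(\ytilde)\utilde_{m-k+\xi}(\xtilde)\utilde_{m-l+\xi}(\xtilde)$, which picks up a factor $\lambda_y^2\lambda_x^2$; on the left-hand side the factor $\utilde_j(\xymtilde)\utilde_k(\zeroA)\utilde_l(\zeroA)$ picks up a factor $\lambda_{x-y}$ (the $\zeroA$ terms are unchanged). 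Hence the left-hand side value $\utilde_i(\xytilde')$ of the output must be scaled by $\lambda_x^2\lambda_y^2/\lambda_{x-y}$ relative to $\utilde_i(\chaineadd(\xtilde,\ytilde,\xymtilde))$; transporting back through the linear isomorphism to $\thetatilde$-coordinates and invoking uniqueness of the solution gives exactly~\eqref{eq@homogen1}.

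Then~\eqref{eq@homogen2} follows by induction on $n$ from the recursion $\tilde{mx+y}=\chaineadd(\tilde{(m-1)x+y},\xtilde,\tilde{(m-2)x+y})$ together with~\eqref{eq@homogen1}; I would set up the exponents as follows: if $\tilde{(m-1)x+y}$ and $\tilde{(m-2)x+y}$ carry scaling exponents $e_{m-1}$ and $e_{m-2}$ in $\lambda_x$ and $f_{m-1}, f_{m-2}$ in $\lambda_{x+y}$ (and appropriate ones in $\lambda_y$), then~\eqref{eq@homogen1} gives a linear recursion $e_m = 2 e_{m-1} + 2 - e_{m-2}$ (the $+2$ coming from the extra $\lambda_x^2$ contributed by the middle argument $\xtilde$), with base cases $e_0=0$ (argument $\ytilde$, exponent in $\lambda_x$ is $0$), $e_1 = ?$ coming from $\tilde{x+y}$, and similarly for the $\lambda_{x+y}$ and $\lambda_y$ exponents. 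Solving these arithmetic recursions yields $e_n = n(n-1)$, $f_n = n$ and the $\lambda_y$-exponent $-(n-1)$. Finally~\eqref{eq@homogen3} is the specialization $\chainemult(n,\xtilde)=\chaineaddmult(n,\xtilde,\xtilde,\zeroA)$: substituting $\lambda_{x+y}=\lambda_x=\lambda_x$ and $\lambda_y=1$ (since $\zeroA$ is not rescaled) into~\eqref{eq@homogen2} collapses the exponent to $n(n-1)+n = n^2$.

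The main obstacle — and the only place where care is genuinely needed — is bookkeeping the base cases and making sure the middle-argument contribution ($\xtilde$ always appears as the second argument of $\chaineadd$ in the Lucas-type recursion, so it always contributes $\lambda_x^2$) is correctly folded into the recursion, and that the recursion for the $\lambda_y$-exponent comes out to a constant $-(n-1)$ rather than growing. One should also be slightly careful that the recursion $\tilde{mx+y}=\chaineadd(\tilde{(m-1)x+y},\xtilde,\tilde{(m-2)x+y})$ as written requires $m\ge 2$ and uses $\tilde{0\cdot x+y}=\ytilde$ and $\tilde{1\cdot x+y}=\xytilde$ as its seeds, so the induction is a genuine two-term recursion and the closed forms $n(n-1), n, -(n-1)$ must be checked against $n=0,1$ as well as verified to satisfy the recursion; this is routine but must be done honestly.
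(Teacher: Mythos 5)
Your proposal is correct and matches the paper's approach, which simply observes that \eqref{eq@homogen1} is an immediate consequence of the addition formulas~\eqref{eq@addition} and that~\eqref{eq@homogen2}--\eqref{eq@homogen3} follow by an easy recursion; you have merely filled in the details of that terse argument. The only slip is terminological: the right-hand side of~\eqref{eq@addition_trans} is quadratic (not bilinear) in each of the $\utilde$-coordinates of $\xtilde$ and of $\ytilde$, but you use the correct degree ($\lambda_x^2\lambda_y^2$) in the actual computation, so nothing is affected.
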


\begin{proof}
  Formula \eqref{eq@homogen1} is an immediate consequence of the addition
  formulas~\eqref{eq@addition}. The rest of the lemma follows by an
  easy recursion.
\end{proof}

\subsubsection{The case $n=2$}
\label{subsubsec@addition_level2}
Let $x$ be the generic point of $A_k$. Then the Riemann equations~\eqref{eq@riemann_eq} come
from the following addition formula:
\begin{equation}
 x=\chaineadd(x,0_A,x). 
  \label{eq@riemann_eq}
\end{equation}


We suppose here that $n=2$ and $\ell=1$. We have for all $i \in \Ztwo$, $(-1)^*\theta_i=\theta_i$,
where $(-1)$ is the inverse automorphism on $A_k$. As a consequence,
$\pol$ gives
an embedding of the Kummer variety $K_A= A/\pm 1$.
The equations~\eqref{eq@riemann_eq} are trivial, so that Riemann equations
does not give the projective equations of this embedding (except when
$g=1$). Nonetheless, one can recover these equations by
considering more general addition relations. 

Let $p: A_k \to K_A$ be the natural projection. If $x,y \in K_A$, let $x_0
\in p^{-1}(x)$ and $y_0=p^{-1}(y)$, we have $p(p^{-1}(x)+p^{-1}(y))=p(\pm
x_0 + \pm y_0)=\{p(x_0+y_0), p(x_0-y_0)\}$. As a consequence, there is
no properly defined addition
law on $K_A$: from $\pm x \in K_A$ and $\pm y \in K_A$, we may compute $\pm
x \pm y$ which give two points on $K_A$. However, if we are also given $\pm
(x-y) \in K_A$, then we can identify $\pm (x+y) \in \{ \pm x \pm y \}$.
Thus the addition chain law from~Theorem~\ref{th@addition} extends to a pseudo
addition on the Kummer variety. We remark that our addition chain is a
generalization of the pseudo addition law on the Kummer variety.

Let $x,y \in K_A$. 
To compute $\pm x \pm y$ without $\pm(x-y)$ we can proceed as follows:
let $X=(X_i)_{i \in \Ztwo}$, $Y=(Y_i)_{i \in \Ztwo}$ be
the two projections of the generic point on $K_A \times K_A$. Then the
addition relations $\chaineadd(X,x,y,Y)$ describe a system of degree~$2$ in
$K_A \times K_A$, whose solutions are $(\pm (x+y), \pm (x-y))$ and $(\pm
(x-y), \pm (x+y))$. From this system, it is easy to recover the points $\{
\pm (x+y), \pm (x-y) \}$, but this involves a square root in $k$. (The
preceding claims will be proved on an upcoming paper). Coming back to isogenies
computations, it means that when working with $n=2$, we have to avoid
computing normal additions, since they require
a square root and are much slower than addition chains. 

We make a last remark concerning additions on the Kummer variety.
Suppose that we are given $x,y,z \in K_A$, together with $\pm (x+y)$, $\pm
(y+z)$. We want to find $\pm (x+z)$. Using the addition relations,
we can compute $S=\{ \pm(x+z), \pm(x-z) \}$. Let $A \in S$, then the
solutions of the addition relations $\chaineadd(X,x+y,A,Y)$ are $\{\pm (2x+y+z), \pm (y-z) \}$ if $A= \pm(x+z)$, and $\{ \pm (2x+y-z), \pm (y+z) \}$ if 
$A=\pm (x-z)$. This allows to find $\pm (y+z) \in S$ if  $2x \ne
0, 2y \ne 0, 2z \ne 0, 2(x+y+z) \ne 0$. We call this the compatible
addition relation, and we can use this to compute $\pm(x+z)$
directly without taking a square root (by computing the gcd between the
two systems of degree two given by the addition relations).

\subsection{Theta group and addition relations}
\label{subsec@theta_and_addition}
In this Section, we study the action of the theta group on the addition
relations. We use this action to find the addition relations linking
the coordinates of the points 
$\{ \tilde{R}_i \}_{i \in \Zln}$. By considering different isogenies 
$\pi: A_k \to B_k$, we can then understand the addition chains between
any isotropic subgroup of $B_k[\ell]$ (see~Section~\ref{subsec@evaluation}). In particular we exploit this to show that we can compute the chain
multiplication by $\ell$ in $O(\log(\ell))$ addition chains.

\begin{lemma}
  \label{lem@compa_isogenies}
  Suppose that $\tilde{x}_1, \tilde{y}_1, \tilde{u}_1, \tilde{v}_1, \tilde{x}_2,
  \tilde{y}_2, \tilde{u}_2, \tilde{v}_2 \in \Atilde$ satisfy the general Riemann
  relations~\eqref{eq_riemann_rel}. 
  \begin{itemize}
    \item 
  For every $g \in G(\pol)$, 
  $g.\tilde{x}_1, g.\tilde{y}_1, g.\tilde{u}_1, g.\tilde{v}_1, g.\tilde{x}_2,
  g.\tilde{y}_2, g.\tilde{u}_2, g.\tilde{v}_2$ also satisfy the Riemann
  relations.
    \item 
      For every isogeny $\pi: (A, \pol, \ThetaA) \to (B, \pol_0,
      \ThetaB)$ such that $\ThetaB$ is $\pi$-compatible of type~$1$ with $\ThetaA$,
      then
  $\pitilde(\tilde{x}_1), \pitilde(\tilde{y}_1), \pitilde(\tilde{u}_1), \pitilde(\tilde{v}_1), \pitilde(\tilde{x}_2), \pitilde(\tilde{y}_2), \pitilde(\tilde{u}_2), \pitilde(\tilde{v}_2) \in \Btilde$ 
  also satisfy the Riemann relations.
  \end{itemize}
\end{lemma}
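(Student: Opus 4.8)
The plan is to observe that, for a fixed $8$-tuple of points of $\Atilde$, the generalized Riemann relations~\eqref{eq_riemann_rel} are a family of bilinear identities in the affine theta coordinates, one for each tuple $(i,j,k,l,m)\in\Zln^5$ with $i+j+k+l=2m$ and each $\chi\in\dZtwo$; both operations in question carry this family into (a subfamily of) the corresponding family, so the conclusion follows from the hypothesis that the original $8$-tuple satisfies every instance of~\eqref{eq_riemann_rel}.

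For the first item I would use the explicit shape of the affine action $\rhopola^*$ of $G(\pol)$ read off from~\eqref{eq@actiontheta} (compare Corollary~\ref{cor@determination}): transported through $\ThetaA$, an element $g\in\Hln$ acts on $\Atilde$ by
\[
\thetatilde_h(g.\xtilde)=\beta_g\,\psi_g(h)\,\thetatilde_{h+a_g}(\xtilde),
\]
for some $a_g\in\Zln$, some $\beta_g\in\kbar^*$ \emph{not depending on $h$}, and $\psi_g$ a character of $\Zln$. Plugging this into an inner sum of~\eqref{eq_riemann_rel} and using $\psi_g(i+t)\psi_g(j+t)=\psi_g(i+j+2t)=\psi_g(i+j)$ — valid since $2t=0$ in $\Zln$ for $t\in\Ztwo$ (the copy of $\Ztwo$ inside $\Zln$ is its $2$-torsion, $\ell n$ being even) — one gets that $\sum_{t\in\Ztwo}\chi(t)\thetatilde_{i+t}(g.\xtilde)\thetatilde_{j+t}(g.\ytilde)$ equals $\beta_g^2\,\psi_g(i+j)$ times the same sum for the original points with $(i,j)$ replaced by $(i+a_g,j+a_g)$. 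Hence the left-hand side of~\eqref{eq_riemann_rel} for the translated $8$-tuple equals $\beta_g^4\,\psi_g(i+j+k+l)=\beta_g^4\,\psi_g(2m)$ times the left-hand side of the instance of~\eqref{eq_riemann_rel} indexed by $(i+a_g,\,j+a_g,\,k+a_g,\,l+a_g,\,m+2a_g)$ for the original $8$-tuple; and since $i'+j'+k'+l'=4m-(i+j+k+l)=2m$ and $(i+a_g)'=i'+a_g$, the right-hand side picks up exactly the same factor $\beta_g^4\,\psi_g(2m)$ times the right-hand side of that same shifted instance. The latter holds by hypothesis, so the two transformed sides agree. (One may instead check this on the generators of $\Hln$: scalars rescale both sides by $\alpha^4$, an element $(1,a,0)$ merely shifts the index tuple, and an element $(1,0,b)$ multiplies both sides by the common factor $e_{\overln}(-2m,b)$.)

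For the second item, recall from Proposition~\ref{prop@iso} and the definition of $\pitilde$ (type-$1$ compatibility) that $\pitilde$ restricts coordinates along the canonical embedding $\phi\colon\Zn\to\Zln$, $j\mapsto\ell j$, i.e.\ $\thetatilde^{B}_{j}\comp\pitilde=\thetatilde^{A}_{\phi(j)}$; for a type-$2$ isogeny one would instead get the linear combination $\thetatilde^{B}_j\comp\pitilde=\sum_{r\in\Zl}\thetatilde^{A}_{j+nr}$ and the argument would break, since~\eqref{eq_riemann_rel} is quadratic — this is where the type-$1$ hypothesis enters. The key compatibility is $\phi(j+t)=\phi(j)+t$ for $j\in\Zn$ and $t\in\Ztwo$ (on the left $t$ sits in $\Zn$, on the right in $\Zln$, and $\ell\cdot\frac{n}{2}=\frac{\ell n}{2}$). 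Consequently, writing out~\eqref{eq_riemann_rel} on $B_k$ for $\pitilde(\tilde{x}_1),\dots,\pitilde(\tilde{v}_2)$ with indices $(i,j,k,l,m)\in\Zn^5$, $i+j+k+l=2m$, and replacing each $\thetatilde^{B}_{\bullet+t}(\pitilde(\cdot))$ by $\thetatilde^{A}_{\phi(\bullet)+t}(\cdot)$, one obtains verbatim the instance of~\eqref{eq_riemann_rel} on $A_k$ for $(\phi(i),\phi(j),\phi(k),\phi(l),\phi(m))\in\Zln^5$ and the same $\chi$ — a legitimate instance, since $\phi(i)+\phi(j)+\phi(k)+\phi(l)=2\phi(m)$ and $\phi(m)-\phi(i)=\phi(i')$. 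It holds by hypothesis, so $\pitilde(\tilde{x}_1),\dots,\pitilde(\tilde{v}_2)$ satisfy the Riemann relations on $B_k$.

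I do not anticipate a real obstacle: Theorem~\ref{th_riemann_rel} is already available and the rest is bookkeeping. The one point that needs care — and where the two hypotheses (an element of $G(\pol)$; type-$1$ compatibility) are genuinely used — is making sure the index shift produced by the $G(\pol)$-action, respectively by $\phi$, commutes with the embeddings of $\Ztwo$ into $\Zln$ and $\Zn$, together with the remark that the $G(\pol)$-action multiplies the two sides of any given Riemann relation by the \emph{same} scalar, which reduces to the built-in identity $i+j+k+l=i'+j'+k'+l'=2m$.
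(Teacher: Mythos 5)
Your proof is correct, and it is essentially the paper's argument: the paper dispatches this lemma with the one line "This is an immediate computation," and your write-up is exactly that computation spelled out. The two places you flag as needing care — that $2t=0$ once $\Ztwo$ is embedded as the $2$-torsion of $\Zln$ (so the character $\psi_g$ factors out of each inner sum, giving a common scalar $\beta_g^4\psi_g(2m)$ on both sides), and that $\phi(j+t)=\phi(j)+t$ for the type-$1$ coordinate restriction (so a Riemann relation on $B_k$ is literally an instance of one on $A_k$) — are indeed the only points of substance, and you have handled them correctly, including the index shift $m\mapsto m+2a_g$ and the identity $(i+a_g)'=i'+a_g$.
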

\begin{proof}
  This is an immediate computation.
\end{proof}

\begin{lemma}
  \label{lem@oppose_point}
  Let $(\alpha,i,j) \in \Hln$. Let $\tilde{x}=(\alpha,i,j).\zeroA \in \Atilde$.
  Then we have $-\tilde{x}=(\alpha,-i,-j).\tilde{0}_{A_k}$.

  More generally, if $\tilde{x} \in \Atilde$, then
  $-(\alpha,i,j).\tilde{x}=(\alpha,-i,-j).-\tilde{x}$, and we have
  $\pitilde(-x)=-\pitilde(x)$.
\end{lemma}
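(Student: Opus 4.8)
Since the statement concerns only the explicit coordinate descriptions of the $\Hln$-action on $\Atilde$ and of the ``minus'' map, the natural route is a direct computation: prove the general identity first, then read off the special case. I would use three ingredients. First, the action of $\Hln$ on $\Atilde$ in coordinates,
\[ \thetatilde_h\bigl((\alpha,i,j).\xtilde\bigr) = \alpha\, e_{\overln}(h+i,j)\, \thetatilde_{h+i}(\xtilde) \qquad (h\in\Zln), \]
which follows from~\eqref{eq@actiontheta}; concretely it combines the index shift $\thetatilde_h((1,i,0).\xtilde)=\thetatilde_{h+i}(\xtilde)$ (the normalisation used to define $\Ptilde_i$) with the scaling $\thetatilde_h((1,0,j).\xtilde)=e_{\overln}(h,j)\,\thetatilde_h(\xtilde)$ used in the proof of Corollary~\ref{cor@determination}. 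Second, the defining convention $-\xtilde=(\thetatilde_{-h}(\xtilde))_{h\in\Zln}$. Third, the bimultiplicativity of the canonical pairing, which gives $e_{\overln}(-a,-b)=e_{\overln}(a,b)$. I will also use that the theta structure is symmetric, so the theta null point satisfies $\thetatilde_{-h}(\zeroA)=\thetatilde_h(\zeroA)$, i.e.\ $-\zeroA=\zeroA$ on the affine cone.

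For the general identity $-(\alpha,i,j).\xtilde=(\alpha,-i,-j).(-\xtilde)$ I would compare $h$-th coordinates for each $h\in\Zln$. The $h$-th coordinate of $-\bigl((\alpha,i,j).\xtilde\bigr)$ is $\thetatilde_{-h}\bigl((\alpha,i,j).\xtilde\bigr)=\alpha\, e_{\overln}(i-h,j)\,\thetatilde_{i-h}(\xtilde)$, while the $h$-th coordinate of $(\alpha,-i,-j).(-\xtilde)$ is $\alpha\, e_{\overln}(h-i,-j)\,\thetatilde_{h-i}(-\xtilde)=\alpha\, e_{\overln}(h-i,-j)\,\thetatilde_{i-h}(\xtilde)$. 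These agree because $e_{\overln}(i-h,j)=e_{\overln}(h-i,-j)$ by bimultiplicativity; note that only this relation is used, so the precise normalisation of the scaling factor in the action formula is immaterial. Specialising to $\xtilde=\zeroA$ and using $-\zeroA=\zeroA$ then gives $-\bigl((\alpha,i,j).\zeroA\bigr)=(\alpha,-i,-j).\zeroA$, which is exactly the first assertion for $\xtilde=(\alpha,i,j).\zeroA$.

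For the last claim $\pitilde(-\xtilde)=-\pitilde(\xtilde)$: by construction $\pitilde$ restricts a coordinate tuple from $\Zln$ to the subgroup $\Zn\subset\Zln$ given by $x\mapsto\ell x$, which is stable under $h\mapsto -h$ and on which this map is a homomorphism. Hence for $h\in\Zn$ one reads off $\thetatilde_h\bigl(\pitilde(-\xtilde)\bigr)=\thetatilde_{\ell h}(-\xtilde)=\thetatilde_{-\ell h}(\xtilde)=\thetatilde_{-h}\bigl(\pitilde(\xtilde)\bigr)=\thetatilde_h\bigl(-\pitilde(\xtilde)\bigr)$, which is the claim.

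The only delicate point is fixing the sign and pairing conventions in the action formula above; once that is settled, every assertion of the lemma reduces to the single algebraic identity $e_{\overln}(-a,-b)=e_{\overln}(a,b)$ together with the symmetry $\thetatilde_{-h}(\zeroA)=\thetatilde_h(\zeroA)$, and I do not anticipate any real obstacle beyond this bookkeeping.
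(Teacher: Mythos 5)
Your proof is correct and follows essentially the same route as the paper's: both argue by a direct coordinate computation that boils down to bimultiplicativity $e_{\overln}(-a,-b)=e_{\overln}(a,b)$ together with the symmetry $a_{-h}=a_h$ of the theta null point. You prove the general identity first and specialize to $\xtilde=\zeroA$ (the paper does the opposite, computing the special case and calling the rest trivial), and your observation that the precise sign in the scaling factor of the $\Hln$-action on $\Atilde$ is immaterial is accurate — indeed the paper's proof of this lemma and the paper's proof of Corollary~\ref{cor@determination} use opposite signs in that factor, without harm.
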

\begin{proof}
  If $\tilde{x}=(x_i)_{i \in \Zln}$, we recall that we have defined
  $-\tilde{x}=(x_{-i})_{i \in \Zln}$.
  Let $\zeroA=(a_i)_{i \in \Zln}$, if $u \in \Zln$ we
  have by~\eqref{eq@actiontheta}:
  $x_u= ((\alpha,i,j).\zeroA)_u= \alpha j(-u-i) a_{u+i}$,
  $((\alpha,-i,-j).\zeroA)_{-u}= \alpha (-j)(u+i) a_{-u-i} = a_{u+i}=x_u$.

  The generalization and the rest of the lemma is trivial.
\end{proof}

An interesting property of the addition formulas, is that they are
compatible with the action $s_{\Kpola}:K_1(\pol) \to \tilde{A}_k$:

\begin{proposition}[Compatibility of the pseudo-addition law]
  For $\tilde{x}, \tilde{y}, \tilde{x-y} \in \Atilde$, and $i,j \in
  \Zln$, we have:
  \begin{equation}
  (1,i+j,0).\chaineadd(\tilde{x},\tilde{y}, \tilde{x-y}) = 
      \chaineadd( (1,i,0).\tilde{x},(1,j,0).\tilde{y}, (1,i-j,0).\tilde{x-y})
    \label{eq@compaadd1}
  \end{equation}
  In particular if we set $\Ptilde_i= (1,i,0).\zeroA$ we have:
  \[ \Ptilde_{i+j}= \chaineadd(\Ptilde_i, \Ptilde_j, \Ptilde_{i-j}) \]
  \label{prop@compaadd}
\end{proposition}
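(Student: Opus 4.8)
The plan is to derive \eqref{eq@compaadd1} from the uniqueness assertion in Theorem~\ref{th@addition}, exploiting the explicit form of the action of the subgroup $\{(1,c,0):c\in\Zln\}$ of $\Hln$ on the affine cone $\Atilde$. Recall from Section~\ref{subsec@evaluation} that $\Ptilde_c=(1,c,0).\zeroA=(\theta_{c+a}(\zeroA))_{a\in\Zln}$; more generally $\rhopola^*((1,c,0))$ sends $(w_a)_{a\in\Zln}\in\Atilde$ to $(w_{a+c})_{a\in\Zln}$, i.e.\ $\theta_a\bigl((1,c,0).\tilde w\bigr)=\theta_{a+c}(\tilde w)$ for all $a\in\Zln$ (this is \eqref{eq@actiontheta} with trivial third component). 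On $A_k$ this descends to the translation by $P_c=\ThetabarA(c,0)\in\Kpola$, and $c\mapsto P_c$ is a group homomorphism $\Zln\to\Kpola$.

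Write $\xytilde:=\chaineadd(\xtilde,\ytilde,\xymtilde)$. Then the three points $\xtilde^{\circ}:=(1,i,0).\xtilde$, $\ytilde^{\circ}:=(1,j,0).\ytilde$ and $\xymtilde^{\circ}:=(1,i-j,0).\xymtilde$ are affine lifts of $x+P_i$, $y+P_j$ and $(x-y)+P_{i-j}=(x+P_i)-(y+P_j)$ respectively, so Theorem~\ref{th@addition} applies: there is a \emph{unique} point of $\Atilde$, namely $\chaineadd(\xtilde^{\circ},\ytilde^{\circ},\xymtilde^{\circ})$, satisfying the addition relations \eqref{eq@addition} attached to $(\xtilde^{\circ},\ytilde^{\circ},\xymtilde^{\circ})$. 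Hence it suffices to show that $(1,i+j,0).\xytilde$ satisfies those relations. Fix admissible indices $p,q,r,s\in\Zln$ with $p+q+r+s=2m$, set $p'=m-p$, $q'=m-q$, $r'=m-r$, $s'=m-s$, and fix $\chi\in\dZtwo$. Writing out \eqref{eq@addition} for the triple $(\xtilde^{\circ},\ytilde^{\circ},\xymtilde^{\circ})$ with the candidate answer $(1,i+j,0).\xytilde$, and applying the coordinate–shift rule together with Lemma~\ref{lem@oppose_point} (which gives $-\ytilde^{\circ}=(1,-j,0).(-\ytilde)$, so that $\theta_{-p'+t}(\ytilde^{\circ})=\theta_{-p'+j+t}(\ytilde)$), every entry turns into a theta coordinate of $\xytilde$, $\xymtilde$, $\zeroA$, $\ytilde$ or $\xtilde$ with its index shifted by $i+j$, $i-j$, $0$, $j$ or $i$ respectively. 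One checks that the resulting identity is \emph{exactly} \eqref{eq@addition} for the original triple $(\xtilde,\ytilde,\xymtilde)$ with index data $\hat p=p+i+j$, $\hat q=q+i-j$, $\hat r=r$, $\hat s=s$, $\hat m=m+i$: indeed $-\hat p'=-p'+j$, $\hat q'=q'+j$, $\hat r'=r'+i$, $\hat s'=s'+i$, and $\hat p+\hat q+\hat r+\hat s=2\hat m$. Since \eqref{eq@addition} holds for the original triple for \emph{all} admissible index data, and $(p,q,r,s,m)\mapsto(\hat p,\hat q,\hat r,\hat s,\hat m)$ is a bijection of the set of admissible tuples, the required relations hold for $(1,i+j,0).\xytilde$; by uniqueness $(1,i+j,0).\xytilde=\chaineadd(\xtilde^{\circ},\ytilde^{\circ},\xymtilde^{\circ})$, which is \eqref{eq@compaadd1}.

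For the final formula, specialize \eqref{eq@compaadd1} to $\xtilde=\ytilde=\xymtilde=\zeroA$. One has $\chaineadd(\zeroA,\zeroA,\zeroA)=\zeroA$ — it is the affine lift of $0_{A_k}=0_{A_k}+0_{A_k}$ produced by $\chaineadd$, and $\zeroA$ itself satisfies \eqref{eq@addition} with all four entries equal to $\zeroA$ by the Riemann equations for the theta null point, so the claim follows from uniqueness in Theorem~\ref{th@addition} (compare \eqref{eq@riemann_eq}). Thus \eqref{eq@compaadd1} reads $(1,i+j,0).\zeroA=\chaineadd\bigl((1,i,0).\zeroA,(1,j,0).\zeroA,(1,i-j,0).\zeroA\bigr)$, that is, $\Ptilde_{i+j}=\chaineadd(\Ptilde_i,\Ptilde_j,\Ptilde_{i-j})$.

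There is no genuinely hard step here; the only point requiring care is that \eqref{eq@compaadd1} is an identity of \emph{affine} (not merely projective) points, so one cannot simply invoke Lemma~\ref{lem@compa_isogenies}: the eight lifts entering the general Riemann relations \eqref{eq_riemann_rel} that underlie \eqref{eq@addition} get translated here by \emph{different} elements of $G(\pol)$ — namely $(1,i+j,0)$, $(1,i-j,0)$, $1$, $1$, $(1,-j,0)$, $(1,j,0)$, $(1,i,0)$, $(1,i,0)$ — rather than by a common $g$, and it is precisely the coordinate–shift description of the Heisenberg action on $\Atilde$ that makes the scalars come out trivially.
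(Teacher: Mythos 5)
Your proof is correct and follows the same strategy as the paper's: compute the indices after the substitution $(a,b,c,d,e)\mapsto(a+i+j,\,b+i-j,\,c,\,d,\,e+i)$, observe via the coordinate-shift rule that the resulting identity is exactly the addition relation for the translated triple, and invoke the uniqueness part of Theorem~\ref{th@addition}. You spell out more explicitly the uniqueness argument, the bijectivity of the index map, and why $\chaineadd(\zeroA,\zeroA,\zeroA)=\zeroA$; the paper leaves these implicit (and, incidentally, its displayed equation~\eqref{eq_addition_ex2} contains small typos in the exponents that your version avoids), but the mathematical content is identical.
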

\begin{proof}
  Let $\tilde{x+y}=\chaineadd(\tilde{x},\tilde{y}, \tilde{x-y})$.
  By Theorem~\ref{th@addition}, we have for every $a,b,c,d,e \in \Zln$ such
  that $a+b+c+d=2e$:

  \begin{multline}
    \big(\sum_{t \in \Ztwo} \chi(t) \vartheta_{a+t}(\tilde{x+y})
    \vartheta_{b+t}(\tilde{x-y})\big).\big(\sum_{t \in \Ztwo} \chi(t)
    \theta_{c+t}(\tilde{0}) \theta_{d+t}(\tilde{0})\big)= \\
    \big(\sum_{t \in \Ztwo} \chi(t) \vartheta_{-e+a+t}(\tilde{y})
    \vartheta_{e-b+t}(\tilde{y})\big).\big(\sum_{t \in \Ztwo} \chi(t)
    \theta_{e-c+t}(\tilde{x}) \theta_{e-d+t}(\tilde{x})\big). 
  \label{eq_addition_ex1}
  \end{multline}

  Applying~\eqref{eq_addition_ex1} to $a'=a+i+j, b'=b+i-j,
  c'=c, d'=d, e'=e+i$, it comes:
  \begin{multline}
    \big(\sum_{t \in \Ztwo} \chi(t) \vartheta_{i+j+a+t}(\tilde{x+y})
    \vartheta_{b+i-j+t}(\tilde{x-y})\big).\big(\sum_{t \in \Ztwo} \chi(t)
    \theta_{c+t}(\tilde{0}) \theta_{d+t}(\tilde{0})\big)= \\
    \big(\sum_{t \in \Ztwo} \chi(t) \vartheta_{-j-e+a+t}(\tilde{y})
    \vartheta_{j+e-b}(\tilde{y})\big).\big(\sum_{t \in \Ztwo} \chi(t)
    \theta_{i+e-c+t}(\tilde{x}) \theta_{i+e-d+t}(\tilde{x})\big). 
  \label{eq_addition_ex2}
  \end{multline}

  Thus $(1,i+j,0).\tilde{x+y}$, $(1,i,0).\tilde{x}$,
  $(1,j,0).\tilde{y}$ and $(1,i-j,0).\tilde{x-y}$
  satisfy the additions relations.
\end{proof}

By applying $\pitilde$, we obtain the following corollary:
\begin{corollary}
  \[\pitilde_{i+j}( 
  \chaineadd(\xtilde,\ytilde,\xymtilde))=
  \chaineadd(\pitilde_i(\xtilde),\pitilde_j(\ytilde),\pitilde_{i-j}(\xymtilde))
  \]
  \label{cor@compaadd}
\ 
\end{corollary}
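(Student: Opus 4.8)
The plan is to deduce this corollary from Proposition~\ref{prop@compaadd} by pushing the identity
$(1,i+j,0).\chaineadd(\xtilde,\ytilde,\xymtilde) = \chaineadd((1,i,0).\xtilde,(1,j,0).\ytilde,(1,i-j,0).\xymtilde)$
forward along $\pitilde$. Two facts are needed: first, that $\pitilde$ is compatible with the pseudo-addition law, i.e. $\pitilde\bigl(\chaineadd(\xtilde,\ytilde,\xymtilde)\bigr) = \chaineadd\bigl(\pitilde(\xtilde),\pitilde(\ytilde),\pitilde(\xymtilde)\bigr)$ for any $\xtilde,\ytilde,\xymtilde \in \Atilde$ lying over a compatible triple $x,y,x-y$; and second, that $\pitilde_k = \pitilde_{s(P_k)}$ sends $\xtilde$ to $\pitilde\bigl((1,k,0).\xtilde\bigr)$, which holds by the very definition of $\pitilde_k$ recalled in Section~\ref{subsec@action}.

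The first fact is the substantive point. By Theorem~\ref{th@addition}, $\tilde{x+y} := \chaineadd(\xtilde,\ytilde,\xymtilde)$ is the unique point of $\Atilde$ for which the addition relations~\eqref{eq@addition} hold, and (as in the proof of that theorem) these relations are a particular instance of the generalized Riemann relations~\eqref{eq_riemann_rel}, in the eight points built out of $\xtilde,\ytilde,\xymtilde,\tilde{x+y}$ and $\zeroA$. Since $\ThetaB$ is $\pi$-compatible of type~$1$ with $\ThetaA$, Lemma~\ref{lem@compa_isogenies} shows that applying $\pitilde$ to a solution of the Riemann relations on $\Atilde$ produces a solution of the Riemann relations on $\Btilde$; using $\pitilde(\zeroA)=\zeroB$ (our normalization of $\zeroA$) and $\pitilde(-\tilde{z}) = -\pitilde(\tilde{z})$ (Lemma~\ref{lem@oppose_point}), the resulting eight-tuple is exactly the one characterizing $\chaineadd\bigl(\pitilde(\xtilde),\pitilde(\ytilde),\pitilde(\xymtilde)\bigr)$. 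The uniqueness clause of Theorem~\ref{th@addition} applied over $B_k$ then gives $\pitilde(\tilde{x+y}) = \chaineadd\bigl(\pitilde(\xtilde),\pitilde(\ytilde),\pitilde(\xymtilde)\bigr)$. One uses here that $\thetatilde_i\circ\pitilde = \thetatilde_i$ for $i\in\Zn$, so that restricting the defining system on $A_k$ to indices in $\Zn$ is literally the defining system on $B_k$; this is precisely what makes Lemma~\ref{lem@compa_isogenies} applicable.

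With both facts in hand the corollary is immediate. Applying $\pitilde$ to the left-hand side of the Proposition~\ref{prop@compaadd} identity gives $\pitilde\bigl((1,i+j,0).\chaineadd(\xtilde,\ytilde,\xymtilde)\bigr)=\pitilde_{i+j}\bigl(\chaineadd(\xtilde,\ytilde,\xymtilde)\bigr)$ by the second fact; applying it to the right-hand side and using the first fact gives $\chaineadd\bigl(\pitilde((1,i,0).\xtilde),\pitilde((1,j,0).\ytilde),\pitilde((1,i-j,0).\xymtilde)\bigr)$, which by the second fact equals $\chaineadd\bigl(\pitilde_i(\xtilde),\pitilde_j(\ytilde),\pitilde_{i-j}(\xymtilde)\bigr)$. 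Equating the two sides yields the claimed formula. The only place where real work occurs is the compatibility of $\pitilde$ with $\chaineadd$; everything else is bookkeeping between the notation $(1,k,0).\,\cdot\,$ and $\pitilde_k$.
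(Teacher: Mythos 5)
Your proof is correct and is essentially the paper's own argument, which simply cites Proposition~\ref{prop@compaadd}, Lemma~\ref{lem@compa_isogenies}, and the definition $\pitilde_i(\xtilde)=\pitilde((1,i,0).\xtilde)$; you have merely spelled out the step the paper calls trivial, namely that $\pitilde$ commutes with $\chaineadd$ because it carries solutions of the Riemann relations on $\Atilde$ to solutions on $\Btilde$ and one then invokes the uniqueness in Theorem~\ref{th@addition}.
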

\begin{proof}
Remember that by definition
  $\pitilde_i(\xtilde)=\pitilde( (1,i,0).\xtilde)$.
  The lemma is then a trivial consequence of Proposition~\ref{prop@compaadd} and
  Lemma~\ref{lem@compa_isogenies}. \end{proof}

We discuss two consequences of the preceding corollary. The first is that
we can recover the point $\tilde{x}=(\theta_i(\tilde{x}))_{i \in \Zln}$ from only a subset
of its coordinates $\{\theta_i(\tilde{x})\}_{i \in \Zln}$ (see
Section~\ref{subsec@compression}).

The second application is the computation of the dual isogeny $\pidual$
(see Section~\ref{subsec@isogeny}).

But first we remark that by setting $\xtilde=\ytilde=\zeroA$ in
Corollary~\ref{cor@compaadd}, we find
\[ \Rtilde_{i+j}=\chaineadd(\Rtilde_i,\Rtilde_j,\Rtilde_{i-j}).\]
%
%
By considering different isogenies $\pi: A_k \to B_k$, we can use
Corollary~\ref{cor@compaadd} to study the associativity of chain
additions:

\begin{corollary}
  \label{cor@fastmul}
  Let $x \in B_k[\ell]$ and $y \in B_k$. Choose any affine lifts $\tilde{x}$,
  $\tilde{y}$ and $\tilde{x+y}$ of respectively $x$, $y$ and $x+y$.
  \begin{enumerate}
    \item 
  Let $\tilde{nx+y}=\chainemultadd(n,\tilde{x+y},\tilde{x},\tilde{y})$ and
  $\tilde{nx}=\chainemult(n,\tilde{x})$.

  We have
  \begin{equation}
   \tilde{(n_1+n_2)x+y}=\chaineadd(\tilde{n_1 x+y},\tilde{n_2 x}, \tilde{
  (n_1-n_2) x +y})
    \label{eq@fastmul}
  \end{equation}

  In particular, we see that we can compute $\tilde{nx+y}$ in $O(\log(n))$
  addition chains by using a Montgomery ladder~\cite[alg. 9.5]{MR2162716}.

\item $-\tilde{nx+y}=
  \chaineadd(n,-(\tilde{x+y}),-\xtilde,-\ytilde)$
  \end{enumerate}
\end{corollary}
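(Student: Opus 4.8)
The plan is to derive the associativity formula~\eqref{eq@fastmul} from Corollary~\ref{cor@compaadd} by moving $x$ into the kernel of a suitably chosen dual isogeny, and to read off the sign identity~(2) from Lemma~\ref{lem@oppose_point}.

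For~(1) I would first reduce to a convenient choice of lifts. Replacing $(\tilde{x},\tilde{y},\tilde{x+y})$ by $(\lambda_x\tilde{x},\lambda_y\tilde{y},\lambda_{x+y}\tilde{x+y})$ multiplies $\tilde{nx}$ and $\tilde{nx+y}$ by the factors recorded in Lemma~\ref{lem@eqhomogen}, and a short check of exponents (using~\eqref{eq@homogen1}) shows that the two sides of~\eqref{eq@fastmul} then pick up the same factor $\lambda_x^{(n_1+n_2)(n_1+n_2-1)}\lambda_{x+y}^{n_1+n_2}\lambda_y^{-(n_1+n_2-1)}$, so it suffices to prove~\eqref{eq@fastmul} for one choice of lifts. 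Since $x\in B_k[\ell]$ and $e'_\ell$ is alternating, $\langle x\rangle$ is $e'_\ell$-isotropic and extends to a maximal $e'_\ell$-isotropic subgroup of $B_k[\ell]$; by~\cite{0910.4668v1} this subgroup is the kernel of an isogeny $\pidual\colon B_k\to A_k$ coming from a $\pi$-compatible theta structure of type~$1$. Replacing the data of Sections~\ref{subsec@action} and~\ref{subsec@evaluation} by this one (all standing hypotheses are preserved), I may assume $x\in K_{\pidual}=\{R_i\}_{i\in\Zl}$, say $x=R_{i_0}$; as $i\mapsto R_i$ is a group homomorphism on $\Zl$, this gives $R_{m i_0}=mx$ for all $m$. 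Fixing $\hat y\in A_k$ with $\pi(\hat y)=y$ and $\tilde{\hat y}\in\pA^{-1}(\hat y)$, I would take the base lifts $\tilde{x}:=\Rtilde_{i_0}=\pitilde_{i_0}(\zeroA)$, $\tilde{y}:=\pitilde(\tilde{\hat y})$, $\tilde{x+y}:=\pitilde_{i_0}(\tilde{\hat y})$.

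With these choices $\pitilde_{m i_0}(\tilde{\hat y})$ lifts $mx+y$ and $\pitilde_{m i_0}(\zeroA)=\Rtilde_{m i_0}$ lifts $mx$. Corollary~\ref{cor@compaadd} applied with $i=n_1 i_0$, $j=n_2 i_0$ to the triple $(\tilde{\hat y},\zeroA,\tilde{\hat y})$, together with $\chaineadd(\tilde{\hat y},\zeroA,\tilde{\hat y})=\tilde{\hat y}$ (formula~\eqref{eq@riemann_eq}), gives $\pitilde_{(n_1+n_2)i_0}(\tilde{\hat y})=\chaineadd(\pitilde_{n_1 i_0}(\tilde{\hat y}),\pitilde_{n_2 i_0}(\zeroA),\pitilde_{(n_1-n_2)i_0}(\tilde{\hat y}))$, and the remark preceding the corollary gives the corresponding relation for the points $\Rtilde_{m i_0}$. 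These are exactly the recursions run by the multiplication-chain algorithm, whose seeds also match ($\mathtt{x}_0=\zeroB=\pitilde(\zeroA)$, $\mathtt{x}_1=\tilde{x}$, $\mathtt{xy}_0=\tilde{y}$, and $\mathtt{xy}_{-1}=\chaineadd(\tilde{y},-\tilde{x},\tilde{x+y})=\pitilde_{-i_0}(\tilde{\hat y})$, the last equality again from Corollary~\ref{cor@compaadd} and Lemma~\ref{lem@oppose_point}); so an induction along the ladder identifies $\chainemultadd(m,\tilde{x+y},\tilde{x},\tilde{y})=\pitilde_{m i_0}(\tilde{\hat y})$ and, in the case $\hat y=0_{A_k}$, $\chainemult(m,\tilde{x})=\Rtilde_{m i_0}$. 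Substituting these into the two displayed relations yields~\eqref{eq@fastmul} for our lifts, hence in general, and since~\eqref{eq@fastmul} is precisely the step performed by a Lucas/Montgomery ladder the $O(\log n)$ count follows. For~(2) I would use Lemma~\ref{lem@oppose_point}: it gives $\pitilde(-\tilde{z})=-\pitilde(\tilde{z})$, $-\zeroA=\zeroA$ (because $a_i=a_{-i}$), and $\chaineadd(-\tilde{a},-\tilde{b},-\tilde{c})=-\chaineadd(\tilde{a},\tilde{b},\tilde{c})$ by equivariance of the pseudo-addition under $x\mapsto -x$ (apply $\utilde_{i,\chi}(-\tilde{z})=\utilde_{-i,\chi}(\tilde{z})$ to~\eqref{eq@addition_trans}); hence running the multiplication-chain algorithm on $(-\tilde{x+y},-\tilde{x},-\tilde{y})$ negates every intermediate quantity of the run on $(\tilde{x+y},\tilde{x},\tilde{y})$ (this holds for the seeds, including $\mathtt{xy}_{-1}$, and propagates through each $\chaineadd$), so its output is $-\tilde{nx+y}$.

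The step I expect to be the main obstacle, and the one place where the hypothesis $x\in B_k[\ell]$ is genuinely needed, is realizing an arbitrary $\ell$-torsion point $x$ as a point $R_{i_0}$ in the kernel of a dual isogeny equipped with compatible theta structures, so that Corollary~\ref{cor@compaadd} ``sees'' the entire multiplication-by-$x$ sequence simultaneously; after that, only the exponent bookkeeping for the homogeneity reduction and the routine induction matching the ladder to the values $\pitilde_{m i_0}(\tilde{\hat y})$ remain.
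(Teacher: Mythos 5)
Your proof of part~(1) follows the same strategy as the paper's: extend $\langle x\rangle$ to a maximal $e'_\ell$-isotropic subgroup $K$, set $A_k=B_k/K$, take the contragredient isogeny $\pi\colon A_k\to B_k$ with a compatible theta structure so that $x$ becomes one of the $R_{i_0}$, and invoke Corollary~\ref{cor@compaadd}. The paper's write-up is terser — it normalizes only the scalar $\lambda_i$ relating $\tilde{x}$ to $\Rtilde_{i_0}$ and says the rest is ``homogeneous'' — whereas you explicitly track the homogeneity in all three lifts $(\lambda_x,\lambda_y,\lambda_{x+y})$ and explicitly match the seeds and the inductive step of the Montgomery ladder to $\pitilde_{mi_0}(\tilde{\hat y})$ and $\Rtilde_{mi_0}$. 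Your exponent bookkeeping, in particular that both sides of~\eqref{eq@fastmul} pick up the factor $\lambda_x^{(n_1+n_2)(n_1+n_2-1)}\lambda_{x+y}^{n_1+n_2}\lambda_y^{-(n_1+n_2-1)}$, is correct and fills in a genuinely underexplained step in the paper (the paper only mentions homogeneity in $\lambda_i$, but as the corollary allows arbitrary lifts of $y$ and $x+y$ too, homogeneity in all three is what is really being used).

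For part~(2), you take a genuinely different and slightly more elementary route. The paper normalizes $\lambda_i=1$, picks a lift $\ytilde'\in\pitilde^{-1}(\ytilde)$, writes $\tilde{nx+y}=\pitilde((1,ni,0).\ytilde')$, and then applies Lemma~\ref{lem@oppose_point} inside the theta group of $A_k$. You instead observe directly that $\chaineadd$ is equivariant under the involution $\tilde{z}\mapsto-\tilde{z}$: $\chaineadd(-\tilde{a},-\tilde{b},-\tilde{c})=-\chaineadd(\tilde{a},\tilde{b},\tilde{c})$, which indeed drops out of~\eqref{eq@addition_trans} and the identities $\utilde_{i,\chi}(-\tilde z)=\utilde_{-i,\chi}(\tilde z)$ and $-\zeroA=\zeroA$ (one needs, and you implicitly use, that the coefficient $(m_2+\xi_2)(2\xi_1)\in\{\pm1\}$ is invariant under the index flip $\xi_1\mapsto-\xi_1$, $m_1\mapsto -m_1$). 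This buys you something: your argument for~(2) never uses $x\in B_k[\ell]$ or the isogeny construction, so it proves the sign identity for every $x\in B_k$, whereas the paper's version of~(2) is formally tied to the same reduction as~(1). Both arguments are correct; yours is a bit more self-contained for this part.
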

\begin{proof}
 We prove the two assertions.

  \begin{enumerate}
    \item 
  Let $K$ be a maximal isotropic group containing $x$, and let
  $A_k=B_k/K$. 
  Let
  $\pi: A_k \to B_k$ be the contragredient isogeny, and choose any theta structure on
  $(A_k, \pi^\ast \pol_0)$ compatible with $\pi$. 
There exist $i \in \Zl$ and $\lambda_i \in \overline{k}^*$
  such that $\xtilde= \lambda_i \pitilde_i(\zeroA)$. 
  If $\lambda_i=1$, then
  Corollary~\ref{cor@fastmul} is a consequence of Corollary~\ref{cor@compaadd}.
  But it is easy (see Lemma~\ref{lem@eqhomogen}) to see that~\eqref{eq@fastmul} is homogeneous in
  $\lambda_i$, hence the result.
\item 
  Once again, let $i \in \Zl$ be such that $\xtilde=\lambda_i \pitilde\left(
  (1,i,0).\zeroA \right)$, and let $\tilde{y}'$ be any point in
  $\pitilde^{-1}(\ytilde)$. By homogeneity we may suppose that
  $\lambda_i=1$. By Corollary~\ref{cor@compaadd} and
  Proposition~\ref{prop@compaadd}, we have
  $\tilde{n x+y}= \pitilde\left( (1,n.i,0).\ytilde' \right)$. 
  Now by Lemma~\ref{lem@oppose_point}, we have
  $- \tilde{n x+y}=\pitilde\left( - (1,n.i,0). \ytilde' \right) 
                  =\pitilde\left( (1,-n.i,0). -\ytilde' \right)
                  = \chaineadd(n,-(\tilde{x+y}),-\xtilde,-\ytilde)$.
  \end{enumerate}
\end{proof}

We make a last remark concerning Corollary~\ref{cor@compaadd} namely a
useful fact for studying the case $\ell$ not prime to $n$:
\begin{remark}
   Let $\xtilde \in \Atilde$, $i \in \Zln$ and let
   $\ytilde=\pitilde(\xtilde)$. Let $m \in \Z$ such that $\ell | m$.
   By Proposition~\ref{prop@compaadd} and Corollary~\ref{cor@compaadd}, we
   have
   \[ \pitilde \left( (1, m i, 0).\xtilde \right) =
   \chainemultadd(m, \pitilde_i(\xtilde), \Rtilde_i, \ytilde) \]
   But if $\ell | m$, then $m i \in \Zn \subset \Zln$, by
   Proposition~\ref{prop@compaaction} we have
   $\pitilde \left( (1, m i, 0).\xtilde \right) = (1, m i, 0). \ytilde$,
   and $(1, m i, 0). \ytilde$ can be computed with the
   formulas~\eqref{eq@actiontheta}.
   Hence
   \[ (1, m i, 0).\ytilde  = \chainemultadd(m, \pitilde_i(\xtilde), \Rtilde_i, \ytilde) \]
   \label{rem@lnotprime}
\end{remark}

For the purpose of Section~\ref{subsec@trueltorsion}, we have to study the
addition relations  between the points in $B_k[\ell]$ which does not
necessarily belong to $\KpolO$. From Section~\ref{subsec@evaluation}, we
see that we need to understand the link between the addition relations and
the isogeny $\ellisotilde$. More generally, for the rest of this section we
will study the relationship between the addition relations and a general
isogeny. First
we take a closer look at the action of $s_{\Kpolb}$ on the addition
relations. Let $\Inv$ be the automorphism of the Theta group from
Section~\ref{subsec@modcorr} that permutes $K_1$ and $K_2$. 
Since $s_{\Kpolb}=\Inv \circ s_{\Kpola} \circ \Inv$ we see that it suffices
to study the action
of $\Inv$ on the addition relations.

\begin{proposition}
  \label{prop@compa_theta_inversion}
  Suppose that $x,y,u,v,x',y',u',v' \in \Atilde$ satisfy the
  general Riemann equations~\eqref{eq_riemann_rel}. Then $\Inv.x$, $\Inv.y$,
  $\Inv.u$, $\Inv.v$, $\Inv.x'$, $\Inv.y'$, $\Inv.u'$, $\Inv.v'$ also satisfy~\eqref{eq_riemann_rel}. 
\end{proposition}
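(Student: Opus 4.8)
The plan is first to make the operation $\Inv$ explicit and then to recognise the claim as the generalised Riemann relations for a \emph{different} theta structure. By the formula~\eqref{eq@inversion_action} for the action of $\Inv$, the induced map on the affine cone is the linear isomorphism $\thetatilde_i\mapsto\sum_{j\in\dZln}e_{\overln}(i,j)\,\thetatilde_j$; thus, for $\xtilde\in\Atilde$, the vector $\Inv.\xtilde$ is nothing but the tuple of theta coordinates of the point $\pA(\xtilde)$ with respect to the theta structure $\Inv_{A_k}\circ\ThetaA$. This theta structure is again symmetric, because the permutation $\Inv_0$ of $\Hln$ commutes with the sign automorphism $(\alpha,x,y)\mapsto(\alpha,-x,-y)$ that defines symmetry. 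Hence the assertion ``$\Inv.x,\Inv.y,\dots,\Inv.v'$ satisfy~\eqref{eq_riemann_rel}'' is, word for word, the assertion that the eight given points of $A_k$ satisfy the generalised Riemann relations when these are written in the theta coordinates attached to $\Inv_{A_k}\circ\ThetaA$.

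First I would apply Theorem~\ref{th_riemann_rel} to the symmetric theta structure $\Inv_{A_k}\circ\ThetaA$ on $(A_k,\pol)$: the proof of that theorem uses nothing about $\ThetaA$ beyond the fact that it is a symmetric theta structure, so it carries over verbatim and furnishes affine lifts, in the cone of $\Inv_{A_k}\circ\ThetaA$, of $\pA(x),\dots,\pA(v')$ that do satisfy the relations in question. It then remains to check that the \emph{specific} vectors $\Inv.x,\dots,\Inv.v'$ are admissible, not merely some lifts. For this I would exploit that~\eqref{eq_riemann_rel} is multihomogeneous, of degree one in each of the eight point-coordinate vectors, four appearing on each side; so for a fixed configuration the solutions form a torsor under the scalings $(\lambda_{x_1},\dots,\lambda_{v_2})\in(\overline{k}^*)^8$ subject to $\lambda_{x_1}\lambda_{y_1}\lambda_{u_1}\lambda_{v_1}=\lambda_{x_2}\lambda_{y_2}\lambda_{u_2}\lambda_{v_2}$ (and no further solutions, by the rigidity underlying the uniqueness statement of Theorem~\ref{th@addition}). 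Since $\Inv$ is linear it intertwines these torsor actions, and since the given points already solve the $\ThetaA$-relations their scaling data relative to any reference solution already satisfies the product constraint; the whole statement therefore collapses to a single base case, which one may take to be the trivial configuration $x_1=\dots=v_1=0_{A_k}$. There~\eqref{eq_riemann_rel} becomes a system of relations among the coordinates of the theta null point alone, a specialisation of the Riemann equations of Theorem~\ref{prop@riemann}, and $\Inv.\zeroA$ is exactly the theta null point of $\Inv_{A_k}\circ\ThetaA$, which satisfies the corresponding equations.

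I expect the main obstacle to be precisely this lift-matching step: upgrading the pure existence in Theorem~\ref{th_riemann_rel} to the rigidity that, for a fixed configuration, the solutions are exactly the $\prod=\prod$ torsor of scalings, and then propagating the base case cleanly (the passage from the trivial configuration to a general one must weave in the addition structure without circularity; translations themselves are harmless by the first part of Lemma~\ref{lem@compa_isogenies}, since an inner automorphism of $G(\pol)$ acts on $\Atilde$ as a translation). An alternative, entirely self-contained route is the brute-force computation: substitute~\eqref{eq@inversion_action} into~\eqref{eq_riemann_rel}, expand, and interchange the orders of summation; each inner sum over $\Ztwo$ then produces a character sum $\sum_{t\in\Ztwo}\chi(t)\,e_{\overln}(t,\cdot)$ which, by orthogonality of characters, equals $2^g$ on one congruence class modulo~$2$ and $0$ elsewhere, so the four inner sums on each side collapse and a regrouping should return~\eqref{eq_riemann_rel} for $x,\dots,v'$ themselves. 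Here the obstacle is combinatorial rather than conceptual: a subsum over $\Ztwo$ is \emph{not} taken to a subsum over $\Ztwo$ by the Fourier transform, so recognising the regrouped quadruple sum as the same linear combination of the hypothesised relations on both sides calls for care with the identifications $\dZln\cong\Zln$, $\dZtwo\cong\Ztwo$ and with the signs contributed by $e_{\overln}(t,\cdot)$ for $2$-torsion $t$.
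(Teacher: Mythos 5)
Your second, ``alternative'' route is the one the paper actually takes, and it goes through cleanly. After substituting~\eqref{eq@inversion_action} into the two factors of~\eqref{eq_riemann_rel} and exchanging the order of summation, the inner sum over $T_1,T_2 \in \Ztwo$ is a character orthogonality:
\[
\sum_{T_1,T_2 \in \Ztwo} \chi(T_1+T_2)\, e(T_1,i+j)\, e(T_2,k+l) =
\begin{cases} 4^g & \text{if } e(\cdot,i+j)=e(\cdot,k+l)=\chi \text{ on } \Ztwo, \\ 0 & \text{otherwise,} \end{cases}
\]
and the condition $e(\cdot,i+j)=e(\cdot,k+l)$ on $\Ztwo$ is equivalent to $i+j+k+l \in 2\Zln$ --- precisely the constraint under which~\eqref{eq_riemann_rel} is hypothesised for $x,\dots,v'$. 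So the combinatorial worry you raise does not materialise: the surviving terms are indexed by exactly the admissible quadruples $(i,j,k,l)$, and after transporting the factors $e(I,i)e(J,j)e(K,k)e(L,l)$ to $e(I',i')e(J',j')e(K',k')e(L',l')$ (using $I+J+K+L=2M$) one substitutes the $\ThetaA$-relations termwise and resums. The regrouping is honest; carry it out and you are done. (The paper even remarks in passing that this calculation shows the generalised Riemann relations \emph{with} characters are already consequences of the ones \emph{without}, by considering $\Inv\circ\Inv$.)

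Your first, conceptual route has a genuine gap, which you flag but do not close, and I do not think it can be closed in the form stated. Interpreting $\Inv.\xtilde$ as the $\Inv_{A_k}\circ\ThetaA$-coordinates of $\pA(\xtilde)$ is correct, and so is applying Theorem~\ref{th_riemann_rel} to that (still symmetric) theta structure. But the torsor argument you invoke only moves you between affine lifts of the \emph{same} underlying configuration $(x_1,\dots,v_2)$; it cannot carry you from the trivial configuration $x_1=\dots=v_1=0_{A_k}$ to a general one. Translations do not suffice: translating the zero tuple by $a$ gives $(a,a,a,a)$, not a general tuple with $x_1+y_1+u_1+v_1=2z$. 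Reaching a general configuration would require the addition structure, which is itself extracted from~\eqref{eq_riemann_rel}, so the reduction becomes circular. Independently, the rigidity claim (``solutions are exactly the scaling torsor, no more'') is of a different shape than the uniqueness in Theorem~\ref{th@addition}, which solves for one of the eight lifts in terms of the other seven; you would need a separate non-degeneracy argument. The direct Fourier computation sidesteps all of this, which is presumably why the paper does it.
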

\begin{proof}
  If $x=(x_i)_{i \in \Zln}$ we recall (see~\eqref{eq@inversion_action})
  that 
  \[\Inv.x=( \sum_{j \in \Zln} e(i,j) x_j)_{i \in \Zln}\]
  where $e=e_{\pol}$ is the commutator pairing.

  By hypothesis, we have for $i,j,k,l \in \Zln$ such that $i+j+k+l=2m$:
\begin{multline}
\big(\sum_{t \in \Ztwo}  \vartheta_{i+t}(x) \vartheta_{j+t}(y)\big).\big(\sum_{t \in \Ztwo} \theta_{k+t}(u) \theta_{l+t}(v)\big)= \\
\big(\sum_{t \in \Ztwo} \vartheta_{i'+t}(x') \vartheta_{j'+t}(y')\big).\big(\sum_{t \in \Ztwo} 
\theta_{k'+t}(u') \theta_{l'+t}(v')\big). 
\end{multline}

Let $A_{\chi,x,y,i,j}= \big(\sum_{t \in \Ztwo} \chi(t) \vartheta_{i+t}(x) \vartheta_{j+t}(y)\big)$. We have if $I,J,K,L \in \Zln$ are such that $I+J+K+L=2M$:
\begin{align*}
  A_{\chi,\Inv.x,\Inv.y,I,J} &= \sum_{T \in \Ztwo} \chi(T) \big(\sum_{i \in \Zln}
  e(I+T,i) \theta_i(x)\big) \big(\sum_{j \in \Zln} e(J+T,j)
  \theta_j(x)\big) \\
  &= \sum_{T \in \Ztwo, i,j \in \Zln} \chi(T) e(T,i+j) e(I,i)e(J,j) \theta_i(x)
  \theta_j(y)
\end{align*}
\begin{multline*}
  A_{\chi,\Inv.x,\Inv.y,I,J} A_{\chi, \Inv.u, \Inv.v, K,L} = \\
  \sum_{\substack{T_1,T_2 \in \Ztwo \\  i,j,k,l \in \Zln}} \chi(T_1+T_2) e(T_1,i+j) e(T_2,k+l) e(I,i) e(J,j) e(K,k) e(L,l) \theta_i(x) \theta_j(y) \theta_k(u) \theta_l(v) \\
  = \sum_{i,j,k,l \in \Zln} e(I,i) e(J,j) e(K,k) e(L,l) \theta_i(x) \theta_j(y) \theta_k(u) \theta_l(v) \\
  \big( \sum_{T_1, T_2 \in \Ztwo} \chi(T_1+T_2) e(T_1,i+j) e(T_2,k+l) \big)
  \label{eq@@eq_interm_rel1}
\end{multline*}
But
\begin{equation*}
  \big( \sum_{T_1, T_2 \in \Ztwo} \chi(T_1+T_2) e(T_1,i+j) e(T_2,k+l)
  \big)=
\begin{dcases}
      4^g &  \text{if $e(\cdot,i+j)=e(\cdot, k+l)=\chi$} \\
      0 &  \text{otherwise} \\
\end{dcases}
\end{equation*}
and $e(\cdot,i+j)=e(\cdot,k+l)$ (as characters on $\Ztwo$) iff there exists $m \in \Zln$ such that $i+j+k+l=2m$. Now since $I+J+K+L=2M$ we have $e(I+J, \cdot)=e(K+L, \cdot)$ so we have:
\begin{multline*}
  \lambda \sum_{t_1,t_2 \in \Ztwo}
   e(I,i+t_1) e(J,j+t_1) e(K,k+t_2) e(L,l+t_2) \theta_{i+t_1}(x) \theta_{j+t_1}(y) \theta_{k+t_2}(u) \theta_{l+t_2}(v)= \\
  \lambda e(I,i) e(J,j) e(K,k) e(L,l) \sum_{t_1,t_2 \in \Ztwo}
   \theta_{i+t_1}(x) \theta_{j+t_1}(y) \theta_{k+t_2}(u) \theta_{l+t_2}(v)= \\
  \lambda e(I,i) e(J,j) e(K,k) e(L,l) \sum_{t_1,t_2 \in \Ztwo}
   \theta_{i'+t_1}(x') \theta_{j'+t_1}(y') \theta_{k'+t_2}(u') \theta_{l'+t_2}(v)= \\
  \lambda e(I',i') e(J',j') e(K',k') e(L',l') \sum_{t_1,t_2 \in \Ztwo}
   \theta_{i'+t_1}(x') \theta_{j'+t_1}(y') \theta_{k'+t_2}(u') \theta_{l'+t_2}(v)
\end{multline*}
where $\lambda=4^g$ if $i+j+k+l=2m$ and $\lambda=0$ otherwise.
By combining these relations we find that
\[ 
  A_{\chi,\Inv.x,\Inv.y,I,J} A_{\chi, \Inv.u, \Inv.v, K,L} = 
  A_{\chi,\Inv.x',\Inv.y',I',J'} A_{\chi, \Inv.u', \Inv.v', K,L} \] 
which concludes the proof. (We remark that we did not need to use the
general Riemann relations with characters on our proof. By considering
$\Inv \circ \Inv$, this shows that the general Riemann relations with
characters are induced by the general Riemann relations without
characters.)
\end{proof}

\begin{corollary}
  Let $\tilde{x}, \tilde{y}, \tilde{x-y} \in \Atilde$, and let $i,j \in
  \Zln$, $k,l \in \dZln$. Then we have:
  \[ (1,i+j,k+l).\chaineadd(\tilde{x},\tilde{y}, \tilde{x-y}) = 
      \chaineadd( (1,i,k).\tilde{x}, (1,j,l).\tilde{y}, (1,i-j,k-l).\tilde{x-y}) \] 
  \label{cor@compaadd2}
\end{corollary}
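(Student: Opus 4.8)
The plan is to split the Heisenberg element $(1,i+j,k+l)$ (I write $\kappa,\lambda$ for the elements of $\dZln$ called $k,l$ in the statement) into its $\Kpola$-part $(1,i+j,0)$ and its $\Kpolb$-part $(1,0,\kappa+\lambda)$, which on $\Atilde$ compose — up to the Heisenberg $2$-cocycle — to $(1,i+j,\kappa+\lambda)$. Compatibility of $\chaineadd$ with the action of $s_{\Kpola}$ is precisely Proposition~\ref{prop@compaadd}, so the real content is the dual statement
\[ (1,0,\kappa+\lambda).\chaineadd(\tilde{x},\tilde{y},\tilde{x-y}) = \chaineadd\big((1,0,\kappa).\tilde{x},\,(1,0,\lambda).\tilde{y},\,(1,0,\kappa-\lambda).\tilde{x-y}\big). \]
Granting this, the corollary follows by composing: move the $\Kpola$-part inside via Proposition~\ref{prop@compaadd}, then the $\Kpolb$-part inside via the display above, then recombine $(1,0,\kappa)(1,i,0)=(1,i,\kappa)$ in each slot, absorbing the accumulated cocycle scalars with the homogeneity of $\chaineadd$ (Lemma~\ref{lem@eqhomogen}); one checks that these bilinear scalars cancel because $e_{\pol}(-(i+j),\kappa+\lambda)\,e_{\pol}(-(i-j),\kappa-\lambda)=e_{\pol}(-i,\kappa)^2\,e_{\pol}(-j,\lambda)^2$.

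To prove the displayed identity I would imitate the proof of Proposition~\ref{prop@compaadd} line by line, changing only the way the shift acts on affine lifts: $s_{\Kpola}$ translates theta-indices ($\vartheta_m\mapsto\vartheta_{m+h}$), whereas $(1,0,\mu)$ multiplies $\vartheta_m$ by $e_{\pol}(-m,\mu)$ by~\eqref{eq@actiontheta}. Concretely, put $\tilde{x+y}=\chaineadd(\tilde x,\tilde y,\tilde{x-y})$ and substitute $(1,0,\kappa).\tilde x$, $(1,0,\lambda).\tilde y$, $(1,0,\kappa-\lambda).\tilde{x-y}$, $(1,0,\kappa+\lambda).\tilde{x+y}$ (and $\zeroA$ unchanged) into the addition relations~\eqref{eq@addition}; by the uniqueness clause of Theorem~\ref{th@addition} it suffices to check these relations still hold. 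Each of the four character sums in~\eqref{eq@addition} acquires a single overall factor: for the sums on $\tilde{x+y}\cdot\tilde{x-y}$, on $\tilde x\cdot\tilde x$ and on $\zeroA\cdot\zeroA$ the dependence on the summation variable $t\in\Ztwo$ drops out because $2t=0$, hence $e_{\pol}(-2t,\mu)=1$, so $\chi$ is untouched; the sum on the $\tilde y$-slot is treated with Lemma~\ref{lem@oppose_point}, which rewrites $-\big((1,0,\lambda).\tilde y\big)=(1,0,-\lambda).(-\tilde y)$. It then remains to match the scalar acquired on the left of~\eqref{eq@addition} with that acquired on the right; by bilinearity of $e_{\pol}$ this reduces to $e_{\pol}(-(i+j),\kappa)\,e_{\pol}(j-i,\lambda)=e_{\pol}(-(k'+l'),\kappa)\,e_{\pol}(i'-j',\lambda)$ for the running indices of~\eqref{eq@addition}, which is immediate from $k'+l'=2m-k-l=i+j$ and $i'-j'=j-i$ (recall $i+j+k+l=2m$ and $i'=m-i$, etc.).

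A more structural route to the same identity is to use $s_{\Kpolb}=\Inv\circ s_{\Kpola}\circ\Inv$ together with Proposition~\ref{prop@compa_theta_inversion}: since $\Inv$, acting on $\Atilde$ via~\eqref{eq@inversion_action}, preserves the general Riemann relations, commutes with the formal negation $\tilde z\mapsto-\tilde z$, and carries $\zeroA$ to the theta-null point of the $\Inv$-conjugated theta structure, it transports the relations characterizing $\chaineadd$ for $\ThetaA$ to those characterizing $\chaineadd$ for $\Inv_{A_k}\circ\ThetaA$; conjugating Proposition~\ref{prop@compaadd} by $\Inv$ then yields the $s_{\Kpolb}$-compatibility directly. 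Either way, the main obstacle is purely bookkeeping: following the values of $e_{\pol}$ through the four character sums of~\eqref{eq@addition} and confirming that the character structure $\sum_{t\in\Ztwo}\chi(t)(\cdots)$ survives the shift — which is exactly where $2t=0$ for $t\in\Ztwo$ enters — and, in the structural route, nailing down the identification $\Zln\cong\dZln$ implicit in the definition of $\Inv_0$.
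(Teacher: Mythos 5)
Your overall strategy — split off the $s_{\Kpolb}$-compatibility, combine it with Proposition~\ref{prop@compaadd}, and recombine — is exactly the paper's, and the structural route you sketch for the $s_{\Kpolb}$-part (conjugating Proposition~\ref{prop@compaadd} by $\Inv$ using $s_{\Kpolb}=\Inv\circ s_{\Kpola}\circ\Inv$ and Proposition~\ref{prop@compa_theta_inversion}) is precisely what the paper invokes. Where you add value is in two places. First, your direct computational route, substituting $(1,0,\mu).\tilde{x}$ into the four character sums of~\eqref{eq@addition} and checking that the $t$-dependence drops out because $2t=0$ for $t\in\Ztwo$, gives an independent elementary proof of the $s_{\Kpolb}$-compatibility that does not rely on the $\Inv$-invariance of the Riemann relations; your scalar bookkeeping, landing on $e_{\pol}(-(i+j),\kappa)\,e_{\pol}(j-i,\lambda)=e_{\pol}(-(k'+l'),\kappa)\,e_{\pol}(i'-j',\lambda)$ from $k'+l'=i+j$ and $i'-j'=j-i$, is correct. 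Second, you are more scrupulous than the paper about the Heisenberg $2$-cocycle in the final recombination. The paper asserts ``$(1,i,k)=s_1(i)s_2(k)$'' and concludes; with the action~\eqref{eq@actiontheta} one actually has $(1,i,0)(1,0,\kappa)=(e(i,\kappa),i,\kappa)$, so a scalar genuinely appears and must be absorbed via Lemma~\ref{lem@eqhomogen}. Your cancellation identity $e_{\pol}(-(i+j),\kappa+\lambda)\,e_{\pol}(-(i-j),\kappa-\lambda)=e_{\pol}(-i,\kappa)^2\,e_{\pol}(-j,\lambda)^2$ is the right one and checks out by bilinearity, so your version closes a small gap in the paper's statement of the final step (alternatively, one can use the exact decomposition $(1,0,\kappa)(1,i,0)=(1,i,\kappa)$ throughout and avoid scalars altogether).
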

\begin{proof}
  By Propositions~\ref{prop@compaadd} and~\ref{prop@compa_theta_inversion}
  we have
  \begin{equation}
   s_2(k+l).\chaineadd(\tilde{x},\tilde{y}, \tilde{x-y}) = 
      \chaineadd( s_2(k).\tilde{x}, s_2(l).\tilde{y}, s_2(k-l).\tilde{x-y}) 
    \label{eq@compaadd2}
  \end{equation}
  Now since $(1,i,k)=s_1(i)s_2(k)$, we conclude by combining
  Equations~\eqref{eq@compaadd1} and~\eqref{eq@compaadd2}.
\end{proof}

\begin{corollary}
  Suppose that $\tilde{x_1}, \tilde{y_1}, \tilde{u_1}, \tilde{v_1}, \tilde{x_2},
  \tilde{y_2}, \tilde{u_2}, \tilde{v_2} \in \Atilde$ satisfy the Riemann
  relations~\eqref{eq_riemann_rel}. 

      If $\pi: (A, \pol, \ThetaA) \to (B, \pol_0, \ThetaB)$ is an isogeny
      such that $\ThetaB$ is $\pi$-compatible with $\ThetaA$, then
      $\pitilde(\tilde{x_1})$, $\pitilde(\tilde{y_1})$, $\pitilde(\tilde{u_1})$,
      $\pitilde(\tilde{v_1})$, $\pitilde(\tilde{x_2})$, $\pitilde(\tilde{y_2})$,
      $\pitilde(\tilde{u_2})$, $\pitilde(\tilde{v_2}) \in \Btilde$ also satisfy
      the general Riemann Relations.

      In particular we have \[ \pitilde(\chaineadd(\xtilde, \ytilde,
      \tilde{x-y}) = \chaineadd(\pitilde(\xtilde), \pitilde(\ytilde),
      \pitilde(\tilde{x-y}))  \]

\end{corollary}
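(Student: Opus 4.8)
The plan is to reduce the statement to the two special cases already established, namely Lemma~\ref{lem@compa_isogenies} for isogenies compatible of type~$1$ and Proposition~\ref{prop@compa_theta_inversion} for the conjugation by $\Inv$. Recall from Section~\ref{subsec@modcorr} that any $\pi$-compatible isogeny factors, by composing isogenies, into $\pi$-compatible isogenies of type~$1$ and of type~$2$, and that a $\pi$-compatible isogeny of type~$2$ between $(A_k,\pol,\ThetaA)$ and $(B_k,\pol_0,\ThetaB)$ is a $\pi$-compatible isogeny of type~$1$ between $(A_k,\pol,\Inv_{A_k}\circ\ThetaA)$ and $(B_k,\pol_0,\Inv_{B_k}\circ\ThetaB)$. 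The property ``$\pitilde$ carries any eight points of $\Atilde$ satisfying the generalized Riemann relations~\eqref{eq_riemann_rel} to eight points of $\Btilde$ satisfying~\eqref{eq_riemann_rel}'' is plainly stable under composition of isogenies (and on affine cones the lift of a composite is, up to a global scalar, the composite of the lifts, once we normalise each lift by $\pitilde(\zeroA)=\zeroB$). Hence it suffices to treat the cases where $\pi$ is $\pi$-compatible of type~$1$ or of type~$2$.

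The type~$1$ case is exactly the second item of Lemma~\ref{lem@compa_isogenies}. For the type~$2$ case I would argue as follows. By~\eqref{eq@inversion_action} the coordinates attached to $\Inv_{A_k}\circ\ThetaA$ are obtained from those attached to $\ThetaA$ by the linear action of $\Inv_{A_k}$ on $\Atilde$; consequently a tuple of points satisfies~\eqref{eq_riemann_rel} written in the $\Inv_{A_k}\circ\ThetaA$-coordinates if and only if the tuple of their images under $\Inv_{A_k}$ satisfies~\eqref{eq_riemann_rel} in the $\ThetaA$-coordinates, which by Proposition~\ref{prop@compa_theta_inversion} (and $\Inv^2=\id$) is equivalent to the original tuple satisfying~\eqref{eq_riemann_rel} in the $\ThetaA$-coordinates. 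In other words, ``satisfying~\eqref{eq_riemann_rel}'' is insensitive to replacing $\ThetaA$ by $\Inv_{A_k}\circ\ThetaA$, and symmetrically on $B_k$. Applying the type~$1$ case to $\pi$ viewed with the $\Inv$-twisted theta structures then gives the type~$2$ case; here one uses that the affine lift of $\pi$ is only defined up to a scalar $\lambda\in\kbar^*$ (Proposition~\ref{prop@iso}), and scaling $\pitilde$ by $\lambda$ multiplies each side of~\eqref{eq_riemann_rel} evaluated at the eight image points by $\lambda^4$, hence preserves the relations.

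Finally, the ``in particular'' statement follows from the first part applied to the configuration used in the proof of Theorem~\ref{th@addition}: there $\tilde{x+y}:=\chaineadd(\xtilde,\ytilde,\tilde{x-y})$ is characterised as the unique point of $\Atilde(\kbar)$ such that $(\tilde{x+y},\tilde{x-y},\zeroA,\zeroA)$ and $(\ytilde,\ytilde,\xtilde,\xtilde)$ satisfy~\eqref{eq_riemann_rel}, i.e.\ satisfy~\eqref{eq@addition}. By the first part together with the normalisation $\pitilde(\zeroA)=\zeroB$, the images $(\pitilde(\tilde{x+y}),\pitilde(\tilde{x-y}),\zeroB,\zeroB)$ and $(\pitilde(\ytilde),\pitilde(\ytilde),\pitilde(\xtilde),\pitilde(\xtilde))$ satisfy the same relations on $B_k$, which is precisely the defining property of $\chaineadd(\pitilde(\xtilde),\pitilde(\ytilde),\pitilde(\tilde{x-y}))$; uniqueness in Theorem~\ref{th@addition} yields the claimed equality. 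I expect the only real difficulty to be bookkeeping: justifying the factorisation through types~$1$ and~$2$ and checking that the scalar indeterminacies of the affine lifts are harmless, since all the substantive identities are already contained in Lemma~\ref{lem@compa_isogenies} and Proposition~\ref{prop@compa_theta_inversion}.
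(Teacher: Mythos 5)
Your proof follows exactly the same route as the paper: reduce to $\pi$-compatible isogenies of type~$1$ and type~$2$, handle type~$1$ by the second item of Lemma~\ref{lem@compa_isogenies}, handle type~$2$ via Proposition~\ref{prop@compa_theta_inversion} and the $\Inv$-twist, and conclude by composition. You merely spell out the bookkeeping (stability under composition, scalar homogeneity, and the characterisation of $\chaineadd$ used for the ``in particular'' clause) that the paper's two-line proof leaves implicit.
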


\begin{proof}
  By Lemma~\ref{lem@compa_isogenies}, this is the case for compatible
  isogenies of type~$1$. By Proposition~\ref{prop@compa_theta_inversion}
  this is also the case for compatible isogenies of type~$2$, which
  concludes since every compatible isogeny is a composition of isogenies
  of type~$1$ or~$2$.
\end{proof}

\section{Application of the addition relations to isogenies}
\label{sec@isogenies}

In this Section we apply the results of Section~\ref{sec@addition} to the
computation of isogenies (see Section~\ref{subsec@isogeny}). More
precisely, we present an algorithm to compute the isogeny $\pidual: B_k \to
A_k$ from the knowledge of the modular point $\zeroA$. We give in
Section~\ref{sec@velu} algorithms to compute $\zeroA$ from the kernel of
$\pidual$. 

Since the embedding of $A_k$ that we consider is given by a theta structure
of level $\overln$, a point $\pidual(x)$ is given by $(\ell n)^g$
coordinates, which get impractical when $\ell$ is high. In order to
mitigate this problem, in Section~\ref{subsec@compression}, we give a point
compression algorithm such that the number of coordinates of a compressed
point does not depend on $\ell$.

We recall that we have chosen in Section~\ref{subsec@evaluation}
$\zeroA=(a_i)_{i \in \Zln}$ thus that $\pitilde(\zeroA)=\zeroB$, and that
we have defined for $i \in \Zln$ $\Rtilde_i=(a_{i+j})_{j \in \Zn}$.

\subsection{Point compression}
\label{subsec@compression}

Suppose that $\ell$ is prime to $n$. 
We know that $\xtilde \in \Atilde$ can be recovered from
$(\pitilde_i(\tilde{x}))_{i \in \Zl}$, by $(\tilde{x})_{ni+\ell
j}=(\tilde{\pi}_i(\tilde{x}))_j$. 
If $(d_1,\cdots,d_g)$ is a basis of $\Zl$, we can prove that
$\xtilde$
can be easily computed from just $(\pitilde_{d_i}(\xtilde))_{i \in [1..g]}$ and
$(\pitilde_{d_i+d_j}(\xtilde))_{i, j \in [1..g]})$.
\glsadd{glo@ei}
\glsadd{glo@di}
If $(e_1, \cdots, e_g)$ is the canonical basis of $\Zln$, in the
following, we take as a basis of $\Zl$ the $d_i= n e_i$ if $i \in  [1..g]$.


\begin{proposition}
  \[\pitilde_{i+j}(\xtilde)=\chaineadd(\pitilde_i(\xtilde),\Rtilde_j,\pitilde_{i-j}(\xtilde)). \]
\end{proposition}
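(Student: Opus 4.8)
The plan is to derive this as an immediate specialization of Corollary~\ref{cor@compaadd}, which already asserts that $\pitilde_{i+j}(\chaineadd(\xtilde,\ytilde,\xymtilde))=\chaineadd(\pitilde_i(\xtilde),\pitilde_j(\ytilde),\pitilde_{i-j}(\xymtilde))$ for all $\xtilde,\ytilde,\xymtilde\in\Atilde$ and all $i,j\in\Zln$. The only real input required is a judicious choice of the three affine lifts fed into $\chaineadd$ on the left-hand side.

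Concretely, I would set $\ytilde=\zeroA$ (a lift of $0_{A_k}$) and $\xymtilde=\xtilde$; this is consistent since $x-0_{A_k}=x$. The point $\chaineadd(\xtilde,\zeroA,\xtilde)$ is then the lift of $x$ singled out by the addition formula~\eqref{eq@addition}, and it equals $\xtilde$ itself: this is exactly the relation $x=\chaineadd(x,0_A,x)$ recorded in Equation~\eqref{eq@riemann_eq}, read on the affine cone with the normalisation of $\zeroA$ fixed in Section~\ref{subsec@evaluation}. Hence the left-hand side of Corollary~\ref{cor@compaadd} collapses to $\pitilde_{i+j}(\xtilde)$, while the right-hand side, using the definition $\Rtilde_j=\pitilde_j(\zeroA)$ from Section~\ref{subsec@evaluation}, becomes $\chaineadd(\pitilde_i(\xtilde),\Rtilde_j,\pitilde_{i-j}(\xtilde))$. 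Comparing the two sides yields the claim. This is the very same computation that produced $\Rtilde_{i+j}=\chaineadd(\Rtilde_i,\Rtilde_j,\Rtilde_{i-j})$ earlier, the only change being that $\xtilde$ now occupies the outer two slots of the addition chain in place of $\zeroA$.

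I do not expect any genuine obstacle. The one point that deserves a line of justification is that $\chaineadd(\xtilde,\zeroA,\xtilde)=\xtilde$ holds at the level of affine cones and not merely projectively; besides citing Equation~\eqref{eq@riemann_eq}, one can see this from the homogeneity relation~\eqref{eq@homogen1}, which gives $\chaineadd(\lambda\xtilde,\zeroA,\lambda\xtilde)=\lambda\,\chaineadd(\xtilde,\zeroA,\xtilde)$, so that $\chaineadd(\xtilde,\zeroA,\xtilde)$ is a lift of $x$ depending linearly on $\xtilde$, together with the base case $\chaineadd(\zeroA,\zeroA,\zeroA)=\zeroA$ and the fact that a nowhere-vanishing regular function on the complete variety $A_k$ is constant. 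Everything else is routine bookkeeping with the definition of $\pitilde_i$ and the already-established Corollary~\ref{cor@compaadd}.
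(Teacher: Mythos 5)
Your proof is correct and follows exactly the paper's own route: both specialize Corollary~\ref{cor@compaadd} to $\ytilde=\zeroA$, $\xymtilde=\xtilde$, use $\chaineadd(\xtilde,\zeroA,\xtilde)=\xtilde$, and identify $\pitilde_j(\zeroA)=\Rtilde_j$. The only difference is that you spell out a justification (via homogeneity and the base case $\chaineadd(\zeroA,\zeroA,\zeroA)=\zeroA$) for the affine-level identity $\chaineadd(\xtilde,\zeroA,\xtilde)=\xtilde$, which the paper simply asserts.
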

\begin{proof}
  We apply Corollary~\ref{cor@compaadd} with $\ytilde=\zeroA$,
  $\tilde{x-y}=\xtilde$, so that we have $\chaineadd(\xtilde,\ytilde,\tilde{x-y)}=\xtilde$. We obtain:
  \[\pitilde_{i+j}(\xtilde)=\chaineadd(\pitilde_i(\xtilde),\pitilde_j(\zeroA),\pitilde_{i-j}(\xtilde)). \]
\end{proof}

\begin{definition}
  Let $S \sub G$ be a subset of a finite abelian group $G$ such that $0_G
  \in S$.
  We note $S'$ the inductive subset of $G$ defined by
  $S'= S \union \{ x+y | x \in S', y \in S', x-y \in S' \}$.
  We say that $S$ is a chain basis of $G$ if $S'=G$.
\end{definition}

\glsadd{glo@Sfrak}
\begin{example}
  \label{ex@chainegen}
  Let $G=\Zl$. Let $\{ e_1, \cdots, e_g\}$ be the canonical basis of $G$. There are two cases to consider to get a chain basis of $G$:
  \begin{itemize}
    \item If $\ell$ is odd, then one can take
      \[ S= \{ 0_G, e_i, e_i+e_j \}_{i,j \in [1..g], i<j }\]
    \item If $\ell$ is even, we use
      \[ S= \{ 0_G, e_{i_1}, e_{i_1}+e_{i_2}, \cdots, e_{i_1}+\cdots+e_{i_g} \}_{i_1,\cdots,i_g \in [1..g], i_1<\cdots<i_g }\]
  \end{itemize}
  In each case, the chain basis $S$ is minimal, we call it the canonical
  chain basis $\Sfrak(G)$ of $G$.
\end{example}

We recall that we have defined a section $\Scal \subset \Zln$ of
$\Zln \to \Zn$ in Example~\ref{ex@section-torsion}. To this set we
associate a canonical chain basis $\Sfrak \subset \Scal$ as follow: if
$\ell$ is prime to $n$, then $\Scal=\Zl \subset \Zln$, and we define
$\Sfrak=\Sfrak(\Zl)=\{ d_1, \cdots, d_g, d_1+d_g,\cdots, d_{g-1}+d_g \}$.
Otherwise we will take $\Sfrak=\Sfrak(\Zln)$.

\begin{theorem}[Point compression]
 Let $\xtilde \in \Atilde$. Then $\xtilde$ is uniquely determined by
 $\zeroA$ and $\{ \pitilde_i(\xtilde) \}_{i \in \Sfrak}$.

 $\zeroA$ is uniquely determined by $\{ \pitilde_i(\zeroA) \}_{i \in \Sfrak}=
 \{\Rtilde_i\}_{i \in \Sfrak}$.
 \label{th@compressionpoints}
\end{theorem}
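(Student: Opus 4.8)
The plan is to deduce the statement from Corollary~\ref{cor@determination} together with the chain relation
\[ \pitilde_{i+j}(\xtilde)=\chaineadd(\pitilde_i(\xtilde),\Rtilde_j,\pitilde_{i-j}(\xtilde)) \]
established in the Proposition above. Indeed, Corollary~\ref{cor@determination} already shows that $\xtilde\in\Atilde$ is determined by the full family $\{\pitilde_i(\xtilde)\}_{i\in\Scal}$, so it suffices to reconstruct that family from its restriction to $\Sfrak$. The key observation is that once $\zeroA=(a_i)_{i\in\Zln}$ is given, every $\Rtilde_j=(a_{j+k})_{k\in\Zn}$ is known, so the right-hand side of the chain relation involves only quantities we already control.

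First I would set up the induction. By construction $\Sfrak$ is a chain basis: if $\ell\wedge n=1$ it is the canonical chain basis of $\Scal=\Zl$, and otherwise it is $\Sfrak(\Zln)$, whose inductive closure is all of $\Zln$, which contains $\Scal$ (Example~\ref{ex@chainegen}). Following the inductive definition of the chain closure, write $\Sfrak=\Sfrak^{(0)}\subseteq\Sfrak^{(1)}\subseteq\cdots$, where each element of $\Sfrak^{(m+1)}\setminus\Sfrak^{(m)}$ has the form $i+j$ with $i,j,i-j\in\Sfrak^{(m)}$, and $\bigcup_m\Sfrak^{(m)}\supseteq\Scal$. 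I claim, by induction on $m$, that $\{\pitilde_i(\xtilde)\}_{i\in\Sfrak^{(m)}}$ is determined by $\zeroA$ and $\{\pitilde_i(\xtilde)\}_{i\in\Sfrak}$. The base case $m=0$ is the hypothesis; for the inductive step, a new element $i+j$ of $\Sfrak^{(m+1)}$ has $\pitilde_i(\xtilde)$ and $\pitilde_{i-j}(\xtilde)$ already known by the inductive hypothesis and $\Rtilde_j$ read off from $\zeroA$, so the chain relation determines $\pitilde_{i+j}(\xtilde)$. Taking $m$ large enough yields $\{\pitilde_i(\xtilde)\}_{i\in\Scal}$, and Corollary~\ref{cor@determination} then recovers $\xtilde$; this is the first assertion.

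For the second assertion I would run the same argument with $\xtilde=\zeroA$: here $\pitilde_i(\zeroA)=\Rtilde_i$, and setting $\xtilde=\ytilde=\zeroA$ in Corollary~\ref{cor@compaadd} turns the chain relation into the self-contained recursion
\[ \Rtilde_{i+j}=\chaineadd(\Rtilde_i,\Rtilde_j,\Rtilde_{i-j}), \]
which involves only the $\Rtilde$'s. The same chain-closure induction then shows that $\{\Rtilde_i\}_{i\in\Scal}=\{\pitilde_i(\zeroA)\}_{i\in\Scal}$ is determined by $\{\Rtilde_i\}_{i\in\Sfrak}$ alone, and Corollary~\ref{cor@determination} recovers $\zeroA$. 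The only steps needing genuine care are checking that the explicit sets of Example~\ref{ex@chainegen} really close up under $(x,y,x-y)\mapsto x+y$ — to all of $\Zl$ in the odd case and to all of $\Zln$ in the even case — and handling the case $\ell\wedge n\ne 1$, where one first builds the $\pitilde_i(\xtilde)$ over all of $\Zln$ and only afterwards restricts to $\Scal$; everything else is a direct application of the addition relations already established.
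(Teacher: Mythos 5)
Your proof is correct and follows essentially the same route as the paper's: use the chain relation $\pitilde_{i+j}(\xtilde)=\chaineadd(\pitilde_i(\xtilde),\Rtilde_j,\pitilde_{i-j}(\xtilde))$ to propagate by induction over the chain closure $\Sfrak'$ of $\Sfrak$, then invoke Corollary~\ref{cor@determination} once you reach $\Scal$. Your explicit stratification $\Sfrak^{(0)}\subseteq\Sfrak^{(1)}\subseteq\cdots$ and your remark that for $\xtilde=\zeroA$ the recursion $\Rtilde_{i+j}=\chaineadd(\Rtilde_i,\Rtilde_j,\Rtilde_{i-j})$ is self-contained (so the second assertion is not circular) just spell out more carefully what the paper's terse proof leaves implicit.
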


\begin{proof}
  By Proposition~\ref{prop@compaadd}
  we have $\pitilde_{i+j}(\xtilde)= \chaineadd(\pitilde_i(\xtilde),
  \pitilde_j(\zeroA), \pitilde_{i-j}(\xtilde), \zeroB)$.
  So by induction, from $\{\pitilde_i(x)\}_{i \in \Sfrak}$ we can compute
  every $\{\pitilde_i(x)\}_{i \in \Sfrak'}$. Since $\Sfrak'=\Scal$ (or
  contains $\Scal$ if $n$ is not prime to $\ell$), 
  Corollary~\ref{cor@determination} shows that $\xtilde$ is
  entirely determined by $\{\pitilde_i(x)\}_{i \in \Sfrak}$ 
  and $\{\pitilde_i(\zeroA)\}_{i \in \Sfrak}$.

  In particular, $\zeroA$ is entirely determined by 
  $\{\pitilde_i(\zeroA)\}_{i \in \Sfrak}$.
  But $\pitilde_i(\zeroA)=\pitilde(\Ptilde_i)$. 
  by Proposition~\ref{prop@compaaction} which concludes.
\end{proof}

In the description of the algorithms, we suppose that $\ell$ is prime to
$n$, so that $\Scal=\Zl \subset \Zln$.

\begin{algorithm}[Point compression]
  \begin{description}
    \item[Input] $\xtilde=(\thetatilde_i(\xtilde))_{i \in \Zln} \in \Atilde$
    \item[Output] The compressed coordinates $(\pitilde_i(\xtilde))_{i \in \Sfrak}$.
    \item[Step 1] For each $i \in \Sfrak$, output
      $(\pitilde_i(\xtilde))=(\thetatilde_{n i +\ell j}(\xtilde))_{j \in \Zn}$
  \end{description}
\end{algorithm}

\begin{algorithm}[Point decompression]
  \begin{description}
    \item[Input] The compressed coordinates $\pitilde(\xtilde)_{i \in \Sfrak}$ of $\xtilde$.
    \item[Ouput] $\xtilde=(\thetatilde_i(\xtilde))_{i \in \Zln} \in \Atilde$.
    \item[Step 1] Set $\Scal':=\Sfrak$.
    \item[Step 2] While $\Scal' \ne \Scal$, choose $i,j \in \Scal'$ such
      that $i+j \in \Scal \setminus \Scal'$ and $i-j \in \Scal'$.\\
      Compute $\pitilde_{i+j}(\xtilde)=\chaineadd(\pitilde_i(\xtilde),
      \Rtilde_j, \pitilde_{i-j}(\xtilde))$.\\
      $\Scal':=\Scal' \bigcup \{i+j\}$.
    \item[Step 3] For all $i \in \Zln$, write $i=n i_0 +\ell j$ and output
      $\thetatilde_i(x)= \left( \pitilde_{i_0}(\xtilde) \right)_j$.
  \end{description}
\end{algorithm}

\begin{algocomplexity}\label{algo@compress}
  By using repeatedly the formula from Proposition~\ref{prop@compaadd}:
  \[\pitilde_{i+j}(\xtilde)= \chaineadd(\pitilde_i(\xtilde), \Rtilde_j,
  \pitilde_{i-j}(\xtilde), \zeroB)\] we can reconstitute every
  $\pitilde_i(\xtilde)$ for $i \in \Zl$ in Step~2 since $\Sfrak$ is a chain basis of
  $\Zl$. We can then trivially recover the coordinates of $\xtilde$ in
  Step~3 since
  they are just a permutation of the coordinates of the
  $\{ \pitilde_i(\xtilde), i \in \Zl \}$ (see Section~\ref{cor@determination}).

  To recover $\xtilde$, we need to do $\# \Scal - \#\Sfrak = O(\ell^g)$ chain
  additions. The compressed point $\{\pitilde_i(\xtilde)\}_{i \in \Sfrak}$
  is given by $\#\Sfrak \times n^g$ coordinates.

  If $\ell n=2 n_0$ and $n_0$ is odd we see that we can
  store a point in $\Atilde$ with $2^g\left( 1+g(g+1)/2 \right)$
  coordinates ($4^g$ if $n_0$ is even) rather than $(2 n_0)^g$. 
\end{algocomplexity}

\subsubsection{Addition chains with compressed coordinates}
\label{subsubsec@compressed_addition}
Let $\xtilde$, $\ytilde$ and $\tilde{x-y} \in \Atilde$. 
Suppose that we have the compressed coordinates
$\{\pitilde_i(\xtilde)\}_{i \in \Sfrak}$, $\{\pitilde_i(\ytilde)\}_{i \in \Sfrak}$, $\{\pitilde_i(\tilde{x-y})\}_{i \in \Sfrak}$.
Then if $i \in \Sfrak$ we have by Corollary~\ref{cor@compaadd}
\[ \pitilde_i(\tilde{x+y})=\chaineadd(\pitilde_i(\xtilde),
\pitilde_0(\ytilde), \pitilde_i(\tilde{x-y}), \]
hence we may recover the compressed coordinates of $\tilde{x+y}$.

We can compare this with an addition chain with the full coordinates (of
level $\ell n$). By the formulas from Theorem~\ref{th@addition},
since $2 | n$ and the formulas sum over points of $2$-torsion, we see that
we are doing $ \# \Scal$ addition chains in $B_k$ of level~$n$. This mean
that we are doing the same addition chains in $B_k$ when we use a chain
addition with compressed coordinates and then use the point decompression
algorithm. But if we just need the compressed coordinates, the chain
additions with compressed coordinates are much
faster since we need to do only $\# \Sfrak$ addition chains of level~$n$.
In particular, since we can compute the multiplication by $m$ with chain
additions, we see that the cost of a multiplication by $m$ is
$O(\# \Sfrak \log(m))$ addition chains of level~$n$.

Since we can take $n=2$, this mean that the additions formulas of level~$2$
allows us to compute addition chains of any level. In particular the speed
up for these formulas given by~\cite{gaudry2007fast} affects all levels.


\subsection{Computing the dual isogeny}
\label{subsec@isogeny}

We recall that we have the following diagram:
\begin{center}
\begin{tikzpicture}[scale=2.5,auto, 
  text height=1.5ex, text depth=0.25ex, 
  myarrow/.style={->,shorten >=1pt}]
  \node[anchor=west] (B) at (0.5,-0.87) {$y \in B_k$} ;
  \node[anchor=west]  (A2) at (1,0) {$z \in A_k$} ;
  \node[anchor=west]  (A1) at (0,0) {$x \in A_k$} ;
  \draw[myarrow] (B) to node[swap] {$\pidual$} (A2);
  \draw[myarrow] (A1) to node[swap] {$\pi$} (B);
  \draw[myarrow] (A1) to node {$[\ell]$} (A2);
\end{tikzpicture}
\end{center}


Let $\ytilde \in \pB^{-1}(y)$ and 
let $\xtilde \in \Atilde$ be such that $\pitilde(\tilde{x})=\ytilde$.
Let $i \in \Zl$. In this section, we describe
an algorithm to compute $\pitilde_i(\ell.\xtilde)$. By using this
algorithm for $i \in \{ d_1, \cdots, d_g, d_1+d_2, \cdots d_{g-1}+d_g \}$,
we can then recover $\pidual(y)=\pA(\ell.\xtilde)$ (see
Theorem~\ref{th@compressionpoints}, $\{d_i\}_{i \in [1..g]}$ is the basis
of $\Zl$ defined in Section~\ref{subsec@compression}). We know that
$\pi_i(x)=y+R_i$. For $i \in \Zl$, we choose a point $\pi_i^a(x) \in
p_A^{-1}(y+R_i)$ so that for each $i \in \Zl$ there exists $\lambda_i \in
\overline{k}^*$ such that $\pitilde_i(\xtilde)=\lambda_i \pi_i^a(x)$. 
If $\xtilde'$ is another point in $\pitilde^{-1}(y)$, then we have 
$\pitilde_i(\xtilde')=\lambda_i' \pi_i^a(x)$, with $\lambda_i'=\zeta \lambda_i$,
$\zeta$ a $\ell$-root of unity. As a consequence, it is possible to recover
$\lambda_i$ only up to an $\ell^{th}$-root of unity, but this
information is sufficient to
compute $\pitilde_i(\ell.\xtilde)$:

\begin{theorem}
  \label{th@isogeny}
  For all $i \in \Zl$, 
\[ \pitilde_i(\ell.\xtilde) =\lambda_i^{\ell} 
    \chainemultadd(\ell, \pi_i^a(x), \ytilde, \Rtilde_i)), \]
  where $\lambda_i^{\ell}$ is determined by:
\begin{equation*}
  \ytilde = \lambda_i^{\ell} \chainemultadd(\ell,
  \pi_i^a(x), \Rtilde_i, \ytilde).
\end{equation*}
\end{theorem}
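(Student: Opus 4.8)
The plan is to reduce the statement to two exact equalities on affine cones and then prove those by transporting chain computations from $\Atilde$ to $\Btilde$ through $\pitilde$. First note that since $\pitilde_i(\xtilde)=\lambda_i\,\pi_i^a(x)$ and $\chainemultadd$ is homogeneous of degree $\ell$ in its first argument (Lemma~\ref{lem@eqhomogen}, Equation~\eqref{eq@homogen2} with $\lambda_x=\lambda_y=1$, $\lambda_{x+y}=\lambda_i$), we have
\[
  \chainemultadd(\ell,\pitilde_i(\xtilde),\ytilde,\Rtilde_i)=\lambda_i^{\ell}\,\chainemultadd(\ell,\pi_i^a(x),\ytilde,\Rtilde_i),
  \qquad
  \chainemultadd(\ell,\pitilde_i(\xtilde),\Rtilde_i,\ytilde)=\lambda_i^{\ell}\,\chainemultadd(\ell,\pi_i^a(x),\Rtilde_i,\ytilde).
\]
So it is enough to prove the two exact identities
\[
  \pitilde_i(\ell.\xtilde)=\chainemultadd(\ell,\pitilde_i(\xtilde),\ytilde,\Rtilde_i)
  \qquad\text{and}\qquad
  \ytilde=\chainemultadd(\ell,\pitilde_i(\xtilde),\Rtilde_i,\ytilde),
\]
the first of which is the asserted formula after the substitution above, and the second of which is exactly the displayed characterization of $\lambda_i^{\ell}$ after the same substitution.

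\textbf{The computations upstairs on $\Atilde$.} I would first prove that for every $m\geq 0$ and every $i\in\Zln$,
\[
  \chainemultadd(m,(1,i,0).\xtilde,\xtilde,\Ptilde_i)=(1,i,0).\chainemult(m,\xtilde),
  \qquad
  \chainemultadd(m,(1,i,0).\xtilde,\Ptilde_i,\xtilde)=(1,mi,0).\xtilde .
\]
Both are proved by induction on $m$ using the recursion defining $\chainemultadd$, the compatibility of the pseudo-addition law with the section $s_{\Kpola}$ (Proposition~\ref{prop@compaadd}: in the first identity apply it with the two indices $i$ and $0$, in the second with indices $(m-1)i$ and $i$), together with the trivial addition relation $\chaineadd(\xtilde,\zeroA,\xtilde)=\xtilde$ coming from the Riemann equations~\eqref{eq@riemann_eq}. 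The base cases $m=0,1$ are immediate from $\chainemultadd(0,\,\cdot\,)$ being the third argument, $\chainemultadd(1,\,\cdot\,)$ being the first, and $\chainemult(1,\xtilde)=\xtilde$, $\chainemult(0,\xtilde)=\zeroA$.

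\textbf{Descending through $\pi$.} Since $\pi$ is a compatible isogeny, $\pitilde$ commutes with $\chaineadd$ (last Corollary of Section~\ref{subsec@theta_and_addition}), hence with $\chainemultadd$ by induction along the defining chain; also $\pitilde_i=\pitilde\circ(1,i,0)$, $\Rtilde_i=\pitilde(\Ptilde_i)$ and $\ytilde=\pitilde(\xtilde)$. Applying $\pitilde$ to the first identity with $m=\ell$ gives
$\chainemultadd(\ell,\pitilde_i(\xtilde),\ytilde,\Rtilde_i)=\pitilde\big((1,i,0).\chainemult(\ell,\xtilde)\big)=\pitilde_i(\ell.\xtilde)$,
which is the first exact identity. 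For the second, I would use that when $i\in\Zl\subset\Zln$ (recall $\Zl$ embeds in $\Zln$ via multiplication by $n$), one has $\ell i=0$, so $(1,\ell i,0)$ is the identity of $\Hln$ and acts trivially on $\Atilde$ by~\eqref{eq@actiontheta}; applying $\pitilde$ to the second identity with $m=\ell$ then yields
$\chainemultadd(\ell,\pitilde_i(\xtilde),\Rtilde_i,\ytilde)=\pitilde\big((1,\ell i,0).\xtilde\big)=\pitilde(\xtilde)=\ytilde$.
Combining with the homogeneity reductions above gives both the main formula and the equation determining $\lambda_i^{\ell}$. The one point requiring care throughout is the bookkeeping of which of the three arguments of $\chainemultadd$ is the multiplied point, which is the translate, and which is their sum — both on $A_k$ and after pushing forward by $\pi$ (where $\ell x+P_i\mapsto \ell y+R_i$ and $\ell P_i+x\mapsto x$); once these roles are matched the two inductions and the final descent are routine.
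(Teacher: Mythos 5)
Your proof is correct and takes essentially the same route as the paper: both rest on Proposition~\ref{prop@compaadd} applied inductively to build the two multiplication-chain identities on $\Atilde$, the homogeneity of $\chaineaddmult$ from Lemma~\ref{lem@eqhomogen}, the vanishing of $\ell i$ in $\Zln$ for $i\in\Zl$, and the fact that $\pitilde$ commutes with addition chains for compatible isogenies. The paper states this more tersely (citing prop@compaadd, cor@compaadd and lem@eqhomogen and invoking ``an easy recursion''), whereas you spell the two inductions and the pushforward step out explicitly; the underlying argument is the same.
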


\begin{proof}
By Proposition~\ref{prop@compaadd} and
Lemma~\ref{lem@eqhomogen} we have:
\[ \pitilde_i(\ell.\xtilde)=\chainemultadd(\ell, \pitilde_i(\xtilde), \pitilde(\xtilde),  \pitilde(\Ptilde_i))
=\lambda_i^{\ell} \chainemultadd(\ell, \pi_i^a(x), \ytilde, R_i). \]

Now we only need to find the $\lambda_i^{\ell}$ for $i \in \Zl$. 
But by Proposition~\ref{prop@compaadd} and an easy recursion, we have 
$\xtilde=s_{\Kpola}(i)^\ell.\xtilde$ so that by Corollary~\ref{cor@compaadd}
and Lemma~\ref{lem@eqhomogen}
\begin{equation*}
  \pitilde(\xtilde)=\chainemultadd(\ell, \pitilde_i(\xtilde),
  R_i, \ytilde) = \lambda_i^{\ell}. \chainemultadd(\ell,
  \pi_i^a(x), R_i, \ytilde)
\end{equation*}
\end{proof}

\begin{algorithm}[The image of a point by the isogeny]
  \label{algo@isogeny}
  \begin{description}
    \item[Input] $y \in B_k$.
    \item[Output] The compressed coordinates of $\pidual(y) \in A_k$.
    \item[Step 1] For each $i \in \Sfrak$ compute $y+R_i$ and choose an
      affine lift $y_i$ of $y+R_i$.
    \item[Step 2] For each $i \in \Sfrak$, compute $\mathtt{ylR}_i:=
  \chainemultadd(\ell, y_i, \Rtilde_i, y_0)$ and $\lambda_i$ such that 
  $y_0 = \lambda_i \mathtt{ylR}_i$.
    \item[Step 3] For each $i \in \Sfrak$, compute
      $\pitilde(\pidual(y_0))_i= \lambda_i
  \chainemultadd(\ell, y_i, \ytilde, \Rtilde_i))$.
  \end{description}
\end{algorithm}

\begin{algocomplexity}
  In Step~3 we compute $ \pitilde_i(\pidual(y)) =\lambda_i^\ell
  \chainemultadd(\ell, y_i, \ytilde, \Rtilde_i))$ where $\lambda_i^\ell$ is
  given in Step~2 by by $\ytilde = \lambda_i^\ell \chainemultadd(\ell, y_i,
  \Rtilde_i, \ytilde)$.

  We can easily recover $\pidual(y)$ from the $\pitilde_i(\pidual(y)), i
  \in \Zl$, but we note that it is faster to only compute the
  $\pitilde_i(\pidual(y))$ only for $i \in \Sfrak$ (with the notations of
  Example~\ref{ex@chainegen} in the preceding section), 
  and then do a point
  decompression (see Algorithm~\ref{algo@compress}). 
  This last step is of
  course unnecessary if the compressed coordinates of $\pidual(y)$ are
  sufficient.

  To compute $\pitilde_i(\ell.x)$, we need to do two multiplication chains
  of length $\ell$. We obtain the compressed coordinates of $\ell. x$ after
  $g (g+1)/2$ such operations. In total we can compute the compressed
  coordinates of a point in $O(\frac{1}{2} g(g+1) \log(\ell))$ additions in $B_k$
  (with $\frac{1}{2} g (g+1) n^g$ divisions in $k$)  and
  the full coordinates in $O(\ell^g)$ additions in $B_k$. We recover the
  equations of the isogeny by applying this algorithm to the generic point
  of $B_k$.
\end{algocomplexity}

\paragraph*{The case $(n,\ell)>1$}
In this case we have to use $\Sfrak=\{e_1, \cdots, e_g, e_1+e_2,\cdots \}$,
and in this case if $i \in \Sfrak$, $\Rtilde_{i}$ is a point of $\ell
n$-torsion.
But we have by Remark~\ref{rem@lnotprime}
\begin{equation*}
  (1,\ell i,0). \ytilde = \lambda_i^{\ell} \chainemultadd(\ell,
  \pi_i^a(x), \tilde{R}_i, \ytilde),
\end{equation*}
so that we can still recover $\lambda_i^{\ell}$.

\paragraph{The case $n=2$}
The only difficult part here is the ordinary additions $y+R_i$, since
the addition chains do not pose any problems with $n=2$. In particular, we
first choose one of the two points $\pm (x \pm R_{e_1})$, which requires a
square root. Now, since we have $\zeroA$ given by a theta structure of
degree $\ell n > 2$, we have the coordinates of $R_{e_1}+R_i$ on $B_k$.
This means that we can compute the compatible additions $x+R_i$ from
$x+R_{e_1}$ and $R_{e_1}+R_i$.

\subsection{Computation of the kernel of the isogeny}
\label{subsec@isogenykern}

We know that the kernel of the isogeny $\pidual: B \to A$ is the subgroup
$K$ generated by $\{R_{d_i}\}_{i \in [1..g]}$.
Let $\ytilde \in \Btilde[\ell]$, up to a projective factor, we may suppose that
$\chainemult(\ell, \ytilde)=\zeroB$. Then $y$ is in $K$ if and only if for
all $i \in \Zl$ we have $\pitilde_i(\pidual(\ytilde))=\Rtilde_i$. Let $\tilde{y+R_i}$ be any
affine point above $y+R_i$. Since $y$ and $R_i$ are points of 
$\ell$-torsion, for all $i \in \Zl$, there exist $\alpha_i, \beta_i
\in \overline{k}^*$ such that 
$\chainemultadd(\ell, \tilde{y+R_i}, \ytilde, \Rtilde_i)) = \alpha_i \Rtilde_i$
$\chainemultadd(\ell, \tilde{y+R_i}, \Rtilde_i, \ytilde)= \beta_i \ytilde$. By
Theorem~\ref{th@isogeny}, we know that
$\pitilde_i(\pidual(\ytilde))=\frac{\alpha_i}{\beta_i} \Rtilde_i$. 
In particular $y \in K$ if and only if $\frac{\alpha_i}{\beta_i}=1$ for all
$i \in \Zln$.

In fact, we will show in Section~\ref{sec@pairings} that
$\alpha_i/\beta_i=e'_\ell(y,R_i)$ where $e'_\ell$ is the extended commutator pairing
from Section~\ref{subsec@evaluation}.
We obtain that $y$ is in $K$ if and only if $e_W(y,R_i)=1$ for $i \in \{d_1,
\cdots, d_g\}$.

\section{The computation of a modular point}
\label{sec@velu}

In the Section~\ref{subsec@velu} we explain how to compute the theta null
point $\zeroA$ from the knowledge of the kernel of $\pidual$. This section
introduces the notion of a ``true'' point of $\ell$-torsion, which is an
affine lift of a point of $\ell$-torsion that satisfy
Equation~\eqref{eq@trueltorsion}. We study this notion in
Section~\ref{subsec@trueltorsion}, and we use these results 
in Section~\ref{subsec@ComputingModular} where we study the computation of
all (or just one) modular points.

\subsection{An analog of Vélu's formulas}
\label{subsec@velu}
We have seen in Section~\ref{sec@addition} how to use the addition formula
to compute the isogeny $\pidual: B_k \to A_k$. For this computation, we need to
know the theta null point $(a_i)_{i \in \Zln}$ corresponding to $A_k$. In
this section, we explain how to  recover the theta null point $(a_i)_{i \in \Zln}$, given the kernel $K=\{ T_i \}_{i \in
\Zl}$ of $\pidual$, by using only the addition relations. This gives an
analog to Vélu's formulas for higher genus. As in the course of the
algorithm we have to take $\ell^{th}$-root in $k$, we suppose that $k$ is algebraically closed.
(If $k=\F_q$, with $\ell | q-1$ so that we have the $\ell$-root of unity,
we only have to work over an extension of degree~$\ell$ of $k$).

Let $\{T_{d_1}, \cdots, T_{d_g} \}$ be a basis of $K$. Let $(a_i)_{i \in
\Zln}$ be the theta null point corresponding to any theta structure on
$A_k$
compatible with the theta structure on $B_k$. The compatible automorphisms of
the theta structure on $A_k$ allows us to recover all the theta null
point of the compatibles theta structures on $A_k$, via the actions:

\begin{gather}
  \{ \Rtilde_i \}_{i \in \Zl} \mapsto \{ \Rtilde_{\psi_1(i)} \}_{i \in
  \Zl},
  \label{action1}\\
  \{ \Rtilde_i \}_{i \in \Zl} \mapsto \{ e(\psi_2(i),i) \Rtilde_i
  \}_{i \in \Zl},
  \label{action2}
\end{gather}
where $\psi_1$ is an automorphism of $\Zl$ and $\psi_2$ is a symmetric
endomorphism of $\Zl$ (see \cite[Prop. 7]{0910.4668v1}).
The $\Rtilde_i$ were defined in Section~\ref{subsec@evaluation}, and we
recall they determine $\zeroA$ entirely. In fact the results of Section~\ref{subsec@compression} show that $\zeroA$ is completely determined by 
$\{ \Rtilde_{d_i}, \Rtilde_{d_i+d_j} \}_{i,j \in [1..g]}$
where $d_1, \cdots, d_g$ is a basis of $\Zl$.

Up to an action~\eqref{action1} we may suppose that $\zeroA$
is such that $\pitilde_{d_i}(\zeroA)=T_{d_i}$. 
Fix $i \in \Zl$. Let $\tilde{T_i}$ be any affine point above $T_i$, we have
$\Rtilde_i=\lambda_i \tilde{T_i}$. Write $\ell=2\ell'+1$, since $R_i$ is a
point of $\ell$-torsion, we have $(1,\ell'+1,0).\Rtilde_i=-
(1,\ell',0).\Rtilde_i$. By Proposition~\ref{prop@compaadd} and
Lemma~\ref{lem@eqhomogen}, we have
\begin{gather}
  \chainemult(\ell'+1,\Rtilde_i)=-\chainemult(\ell',\Rtilde_i) \notag, \\
  \lambda_i^{(\ell'+1)^2}\chainemult(\ell'+1,\tilde{T_i})=-\lambda_i^{\ell'^2}
  \chainemult(k,\tilde{T_i}), \notag \\
  \lambda_i^{\ell} \chainemult(\ell'+1,\tilde{T_i})=-
  \chainemult(\ell',\tilde{T_i}). \label{eq@trueltorsion1}
\end{gather}

Hence we may find $\lambda_i$ up to an $\ell^{th}$-root of unity. If we apply
this method for $i \in \{d_1, \cdots, d_g, d_1+d_2, \cdots, d_{g-1}+d_g
\}$, we find $\Rtilde_i$ up to an $\ell^{th}$-root of unity. But the
action~\eqref{action2} shows that every choice of $\Rtilde_i$ comes from a valid
theta null point $\zeroA$.

\begin{algorithm}[Vélu's like formula]
  \label{algo@velu}
  \begin{description}
    \item[Input] $T_{d_1}, \cdots T_{d_g}$ a basis of the kernel $K$ of $\pidual$.
    \item[Output] The compressed coordinates of $\zeroA$, the theta null point of level $\ell n$
  corresponding to $\pidual$. 
\item[Step 1] For $i,j \in [1..g]$ compute the points
  $T_{d_i}+T_{d_j}$. Let $\Sfrak=\{ d_1, \cdots, d_g, d_1+d_2, \cdots d_{g-1}+d_g \}$.
\item[Step 2] For each $i \in \Sfrak$
  choose any affine lift $T'_i$ of $T_i$, and compute
  $(\beta^i_j)_{j \in \Zn}:=\chainemult(\ell',T'_i)$, and
  $(\gamma^i_j)_{j \in \Zn}:=\chainemult(\ell'+1,T'_i)$.
\item[Step 3] 
  For each $i \in \Sfrak$  compute $\alpha_i$ such that
  $(\gamma^i_j)_{j \in \Zn} =\alpha_i (\beta^i_{-j})_{j \in \Zn}$.
\item[Step 4] 
  For each $i \in \Sfrak$, output
  $\Rtilde_i:= (\alpha_i)^{\frac{1}{\ell}} \cdot T'_i$.
  \end{description}
\end{algorithm}
\begin{algocomplexity}
In Step~4 we compute
$\Rtilde_i$ be any of the $\ell$ affine lift of $T_i$ such that: 
  $\chainemult(\ell'+1,\Rtilde_i)=- \chainemult(\ell',\Rtilde_i)$. 
  Then $\{ \Rtilde_i \}_{i \in  \Sfrak}$ give the compressed coordinates of
  $\zeroA$, we can then recover $\zeroA$ by doing a point decompression
  (see Algorithm~\ref{algo@compress}).

  To find $\Rtilde_i$, we need to do two chain multiplications of length
  $\ell/2$, and then take an $\ell^{th}$-root of unity. After $g (g+1)/2$
  such operations, we obtain the compressed coordinates of a $\zeroA$, and
  we may recover the full coordinates of the corresponding $\zeroA$ using
  the point decompression algorithm \ref{algo@compress}. We remark that we
  only need the compressed coordinates of $\zeroA$ to compute the
  compressed coordinates of $\pidual$. In total we need to compute
  $g(g+1)/2$ $\ell^{th}$-roots of unity and $O(\frac{1}{2} g (g+1)
  \log(\ell))$ additions in $B_k$ to recover the compressed coordinates of
  $\zeroA$. We can then recover the full coordinates of $\zeroA$ at the
  cost of $O(\ell^g)$ additions in $B_k$.
\end{algocomplexity}

\begin{remark}
  Each choice of the $g(g+1)/2$ $\ell^{th}$-roots of unity give a theta null
  point corresponding to the same Abelian variety $A_k=B_k/K$. However, each
  such point comes from a different theta structure on $A_k$, and hence give
  a different decomposition of the $\ell$-torsion $A[\ell]=K_1(\ell) \oplus
  K_2(\ell)$. Since $B_k=A_k/K_2(\ell)$, $K_2(\ell)=K$ is fixed so that each
  point gives a different $K_1(\ell)$. This mean that if $C_k=A_k/K_1(\ell)$
  we can recover different $\ell^2$-isogeny $B_k \to C_k$ from such
  choices (see
  Section~\ref{subsec@modcorr}).  By looking at the action~\eqref{action2},
  we see that there is a bijection between the $\ell^{g(g+1)/2}$ choices
  and the $\ell^2$ isogenies whose kernel $\mathfrak{K} \subset B_k$ is such
  that $\mathfrak{K}[\ell]=K$.
\end{remark}

\paragraph{The case $(n,\ell)>1$.}
In this case once again we have to recover $\Rtilde_{i}$ for $i \in \Sfrak=
\{ e_1, \cdots, e_g, e_1+e_2, \cdots, e_1+e_g \}$. Suppose that we have $\{
T_i \}_{i \in \Zl}$, $\ell^g$ points of $\ell n$-torsion such that
$\ell. T_i = (1, \ell i, 0).0_B$. Once again if $i \in \Sfrak$, we may
suppose that $\Rtilde_i=\lambda_i \tilde{T}_i$.

We have if $\ell=2\ell'+1$ is odd:
\begin{gather*}
  \lambda_i^{\ell} \chainemult(\ell'+1,\tilde{T_i})=- (1,\ell (n-1),0). \chainemult(\ell',\tilde{T_i})
\end{gather*}
so that once again we can find $\lambda_i^{\ell}$.

The kernel of $\pidual$ is then $K=\{ n T_i \}_{i \in \Zl}$. Even if $K$ is
isotropic, the $ \{T_i \}_{i \in \Zl}$ may not be, so some care must be
taken when we choose the $\{T_i \}_{i \in \Zl}$.

If $\ell=2 \ell'$ is even, we have:
\begin{gather*}
  \lambda_i^{2 \ell} \chainemult(\ell'+1,\tilde{T_i})=- (1,\ell (n-1),0). \chainemult(\ell'-1,\tilde{T_i}) 
\end{gather*}
so we can recover only $\lambda_i^{2 \ell}$. But every choice still
corresponds to a valid theta null point $(a_i)_{i \in \Zln}$, because when
$2 | \ell$, to the actions~\eqref{action1} and~\eqref{action2} we have to
add the action given by the change of the maximal symmetric level
structure~\cite[Proposition~7]{0910.4668v1}.

\paragraph{The case $n=2$}
Once again, the only difficulty rest in the standard additions. Using
standard additions, we may compute $R_{e_1}\pm R_{e_2}, \cdots,
R_{e_1} \pm R_{e_g}$, making a choice each time. Then we can compute
$R_{e_i}+R_{e_j}$ by doing an addition compatible with
$R_{e_1}+R_{e_i}$ and $R_{e_1}+R_{e_j}$.

\subsection{Theta group and $\ell$-torsion}
\label{subsec@trueltorsion}

Let $\tilde{x} \in \Btilde$ be such that $\pB(x)$ is a point of
$\ell$-torsion. We say that $x$ is a ``true'' point of $\ell$-torsion
if $\xtilde$ satisfy (see~\eqref{eq@trueltorsion1}):
\begin{equation}
  \chainemult(\ell'+1,\xtilde)=- \chainemult(\ell',\xtilde). 
  \label{eq@trueltorsion}
\end{equation}

\begin{remark}
  \label{rem@trueltorsion}
If $\xtilde$ is a ``true'' point of $\ell$-torsion, then
Lemma~\ref{lem@eqhomogen} shows it is also the case for $\lambda \xtilde$
for any $\lambda$ an $\ell^{th}$-root of unity. 
\end{remark}

We have seen in the preceding Section the importance of taking lifts that
are ``true'' points of $\ell$-torsion. The aim of this section is to use
the results of Section~\ref{subsec@theta_and_addition} to show that the
addition chain of ``true'' points of $\ell$-torsion is again a
``true''-point of $\ell$-torsion. We will use this in
Section~\ref{subsec@ComputingModular} to compute ``true'' affine lifts of
$B_k[\ell]$ by taking as few $\ell^{th}$-roots as possible.

We will use the affine lifts of points in $B_k[\ell]$ that we have
introduced in Section~\ref{subsec@evaluation} to study this notion.
Let $\bpol = [\ell]^* \ppol$ on $B_k$ and $\Thetall$ a theta structure for
$\bpol$ compatible with $\ThetaB$. 
Recall that we note $\Bell$ the affine cone of $(B_k, \bpol)$, and $\ellisotilde$ the
morphism $\Bell \to \Btilde$ induced by $\elliso$. Since
$\bpol \simeq \ppol^{\ell^2}$, the natural action of $G(\bpol)$ on
$H^0(\bpol)$ give via $\Thetall$ an action of $\Hll$ on $H^0(\bpol)$. 

\begin{lemma}
  Let $y \in B_k[\ell]$, $\ytilde \in \pB^{-1}(y)$ and $\xtilde \in
  \ellisotilde^{-1}(\ytilde)$. Then there exists  
  $(\alpha, n i,n j) \in k^{\ell} \times \Zstruct{\overll} \times
  \dZstruct{\overll}$ such that $\xtilde=(\alpha,n i,nj). 0_{\Bell}$.
  Moreover, 
  $\ytilde$ is a true point of $\ell$-torsion if and only if
  $\alpha=\lambda_{i,j} \mu$ where $\mu$ is an
  $\ell^{th}$-root of unity and $\lambda_{i,j}=e_c(i,-j)^{\ell' n
  (\ell-1)}$.

  (If $x' \in \Bell$, then $x' \in \ellisotilde^{-1}(y)$ if and only if
$x'=(1,\ell i', \ell j').x$ where $(i',j') \in \Zstruct{\overll} \times
\dZstruct{\overll}$), so the class of $\alpha$ in $k^{\ast}/k^{\ast\ell}$
does not depend on $\xtilde$ but only on $\ytilde$).
  \label{lem@carac_trueltorsion}
\end{lemma}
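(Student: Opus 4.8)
The plan is to transport the identity defining a ``true'' point of $\ell$-torsion from $\Btilde$ back to the affine cone $\Bell$ along $\ellisotilde$, where the Heisenberg action is explicit, and then compare the resulting scalars. For the first assertion, write $x$ for the image of $\xtilde$ in $B_k$. Since $\ellisotilde$ lifts $\elliso=[\ell]$ we have $[\ell]x=y$, and $y\in B_k[\ell]$ then forces $[\ell^{2}]x=0$, i.e. $x\in B_k[\ell^{2}]$. Now $\bpol\simeq\ppol^{\ell^{2}}$ is of type $\overll$, so $\Thetall$ identifies $\bigl(K(\bpol),e_{\bpol}\bigr)$ with $\bigl(\Zll\times\dZll,e_c\bigr)$, and under this identification $B_k[\ell^{2}]$ is the subgroup $n\Zll\times n\dZll$. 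Hence $x$ is the image of some $(1,ni,nj)\in\Hll$, and as the scalars $\kbar^{*}$ act simply transitively on each fibre of $\Bell\to B_k$ we obtain $\xtilde=(\alpha,ni,nj).\zeroBl$ for a unique $\alpha\in\kbar^{*}$. This is also the source of the freedom ``up to an $\ell$-th root of unity'': another point of $\ellisotilde^{-1}(\ytilde)$ is $h.\xtilde$ with $h\in H$, which changes $(\alpha,ni,nj)$ to $h\cdot(\alpha,ni,nj)$, and since elements of $H$ have order dividing $\ell$ the commutator scalar this produces is an $\ell$-th root of unity, so $\alpha^{\ell}$ (equivalently the class of $\alpha$ modulo $\ell$-th roots of unity) depends only on $\ytilde$.

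Next write $\xtilde=\alpha\,\tilde P$ with $\tilde P=(1,ni,nj).\zeroBl$. From $\zeroBl=\chaineadd(\zeroBl,\zeroBl,\zeroBl)$, Proposition~\ref{prop@compaadd} and Corollary~\ref{cor@compaadd2} give by induction on $m$ that $\chainemult(m,\tilde P)=(1,mni,mnj).\zeroBl$, hence by the homogeneity relation~\eqref{eq@homogen3} that $\chainemult(m,\xtilde)=\alpha^{m^{2}}(1,mni,mnj).\zeroBl$. Since $\ellisotilde$ lifts the $(\Thetall,\ThetaB)$-compatible isogeny $\elliso$ it commutes with $\chaineadd$, hence with $\chainemult$ (Section~\ref{subsec@theta_and_addition}); it is $\kbar^{*}$-linear, satisfies $\ellisotilde\zeroBl=\zeroB$, kills $H$ (Section~\ref{subsec@evaluation}) and commutes with negation (Lemma~\ref{lem@oppose_point}, which applies verbatim on $\Bell$, the relevant theta null point being symmetric). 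Therefore
\[ \chainemult(m,\ytilde)=\alpha^{m^{2}}\,\ellisotilde\bigl((1,mni,mnj).\zeroBl\bigr),\qquad -\bigl((1,mni,mnj).\zeroBl\bigr)=(1,-mni,-mnj).\zeroBl. \]

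Write $\ell=2\ell'+1$. By the two displayed facts, $\ytilde$ is a true point of $\ell$-torsion, i.e. $\chainemult(\ell'+1,\ytilde)=-\chainemult(\ell',\ytilde)$, if and only if
\[ \alpha^{(\ell'+1)^{2}}\ellisotilde\bigl((1,(\ell'+1)ni,(\ell'+1)nj).\zeroBl\bigr)=\alpha^{\ell'^{2}}\ellisotilde\bigl((1,-\ell'ni,-\ell'nj).\zeroBl\bigr). \]
In $\Hll$ one computes $(1,(\ell'+1)ni,(\ell'+1)nj)\cdot(1,-\ell'ni,-\ell'nj)^{-1}=c\cdot(1,\ell ni,\ell nj)$ for an explicit commutator scalar $c$, and $(1,\ell ni,\ell nj)\in H$ because $ni$ is $n$-divisible, so $\ell\cdot(ni)$ lands in the image of $\Zl$ inside $\Zll$. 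Applying $\ellisotilde$ (linear and trivial on $H$) gives $\ellisotilde\bigl((1,(\ell'+1)ni,(\ell'+1)nj).\zeroBl\bigr)=c\,\ellisotilde\bigl((1,-\ell'ni,-\ell'nj).\zeroBl\bigr)$. The vector on the right is a nonzero affine lift of a point of $B_k$, so it cancels, and the condition reduces to $\alpha^{(\ell'+1)^{2}}c=\alpha^{\ell'^{2}}$, i.e. $\alpha^{\ell}=c^{-1}$. A bilinearity computation of $c$ shows $c^{-1}=\bigl(e_c(i,-j)^{\ell' n(\ell-1)}\bigr)^{\ell}=\lambda_{i,j}^{\ell}$, so the condition is exactly $\alpha=\lambda_{i,j}\mu$ with $\mu^{\ell}=1$, as claimed.

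The main obstacle is the Heisenberg bookkeeping in the last paragraph: pinning down $B_k[\ell^{2}]$ inside $K(\bpol)$, and keeping track of the level ($\ell$, $\ell^{2}$ or $\ell^{2}n$) at which each Heisenberg element and each value of $e_c$ lives, so that the commutator scalar $c$ comes out with exponent exactly $\ell' n(\ell-1)$. Everything else is formal, following from the compatibility of the addition chain with the Heisenberg action (Proposition~\ref{prop@compaadd}, Corollary~\ref{cor@compaadd2}) and with $\ellisotilde$.
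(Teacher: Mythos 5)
Your proposal follows essentially the same route as the paper's proof: lift the ``true'' $\ell$-torsion condition to $\Bell$ where the Heisenberg action is explicit, use the compatibility of addition chains with the Heisenberg action (Corollary~\ref{cor@compaadd2}), with $\ellisotilde$, and with negation (Lemma~\ref{lem@oppose_point}), then reduce to a commutator-scalar identity. The paper normalizes $\alpha=\lambda_{i,j}$ via Remark~\ref{rem@trueltorsion} and checks that this specific lift is true; you carry $\alpha$ through explicitly and use the $\ell^{2}$-homogeneity of $\chainemult$ plus $H$-invariance of $\ellisotilde$ to reduce to $\alpha^{\ell}=c^{-1}$. These are the same argument in two equivalent presentations, and your organization is arguably cleaner because the $\ell^{2}$-homogeneity makes transparent why only $\alpha^{\ell}$ is constrained.

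The one substantive weak point is the final sentence, ``A bilinearity computation of $c$ shows $c^{-1}=\bigl(e_c(i,-j)^{\ell' n(\ell-1)}\bigr)^{\ell}$.'' This is the crux of the lemma --- it is precisely the step that pins down the explicit form of $\lambda_{i,j}$ --- and you assert it rather than carry it out. Doing the computation from the Heisenberg multiplication law (with the convention implicit in~\eqref{eq@actiontheta}), one finds $c=e_c(\ell n i,\ell' n j)=e_c(i,j)^{\ell\ell' n^{2}}$, which gives $\alpha^{\ell}=e_c(i,-j)^{\ell\ell' n^{2}}$ and hence a representative $\ell$-th root $e_c(i,-j)^{\ell' n^{2}}$; reconciling this with the stated $e_c(i,-j)^{\ell' n(\ell-1)}$ requires $\ell'(n+1)\equiv 0\pmod\ell$, which is not automatic, so there is a bookkeeping discrepancy hiding under ``bilinearity computation''. (The paper's own displayed chain of equalities in this step contains evident typographical slips, so this is genuinely delicate terrain.) Since you correctly isolate this as the only hard step, the proposal is structurally sound, but the Heisenberg exponent needs to actually be nailed down rather than deferred.
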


\begin{proof}
  Since $p_{\Bell}(\xtilde) \in B_k[\ell^2]$, there is an element $h \in
  \Hll$ such that $\xtilde=h.0_{\Bell}$, with $h=(\alpha, ni, nj)$.
  By Remark~\ref{rem@trueltorsion}, we only need to
  check that $(\lambda_{i,j},n i,n j).0_{\Bell}$ is a ``true'' point of $\ell$-torsion.
  Let $m \in \Z$, and let $\xtilde_m=\chainemult(m,\xtilde)$,  
   $\ytilde_m=\chainemult(m,\ytilde)$. By 
  Corollary~\ref{cor@compaadd2} we have 
  $\xtilde_m=(1,m \cdot i, m \cdot j).0_{\Bell}$, and by
  Corollary~\ref{cor@compaadd2}
  $\ytilde_m=\ellisotilde (1,m \cdot i, m \cdot j).0_{\Bell}$.
  So by Lemma~\ref{lem@oppose_point}
  $\ytilde_{\ell'}=\ellisotilde (1,\ell' \cdot i, \ell' \cdot j).0_{\Bell}
              =e_c(ni,\ell n (\ell-1) j) \ellisotilde (1,\ell' \cdot i+\ell n (\ell-1) , \ell' \cdot j+\ell n(\ell-1)).0_{\Bell}
=\lambda_{i,j}^{-\ell} \ellisotilde (1,-(\ell'+1) \cdot i , -(\ell'+1) \cdot j).0_{\Bell} 
=\lambda_{i,j}^{-\ell} \ellisotilde (-\xtilde_{\ell'+1})
= - \lambda_{i,j}^{-\ell} \ytilde_{\ell'+1}$.
\end{proof}

\begin{proposition}
  \label{prop@add_trueltorsion}
  Let $\tilde{y_1}, \tilde{y_2}, \tilde{y_1-y_2} \in \Btilde$ be points of ``true'' $\ell$-torsion. Then $\tilde{y_1+y_2}:=\chaineadd(\tilde{y_1},\tilde{y_2},\tilde{y_1-y_2})$ is a ``true'' point of $\ell$-torsion.
\end{proposition}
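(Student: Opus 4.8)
The plan is to transfer everything to the affine cone $\Bell$ of $(B_k,\bpol)$, $\bpol=[\ell]^*\ppol$, and to apply the characterisation of ``true'' $\ell$-torsion given by Lemma~\ref{lem@carac_trueltorsion}; working on $\Bell$ is what makes the Heisenberg/addition-chain machinery of Section~\ref{subsec@theta_and_addition} applicable, since $B_k[\ell]$ is in general not compatible with the splitting of $K(\pol_0)$ cut out by $\ThetaB$. First I would pick lifts $\xtilde_1,\xtilde_2\in\Bell$ of $\tilde{y_1},\tilde{y_2}$ through $\ellisotilde$, written as in Lemma~\ref{lem@carac_trueltorsion} in the form $\xtilde_1=(\alpha_1,ni_1,nj_1).\zeroBl$ and $\xtilde_2=(\alpha_2,ni_2,nj_2).\zeroBl$. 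Since $\pB\circ\ellisotilde=[\ell]\circ p_{\Bell}$, the point $P:=p_{\Bell}(\xtilde_1)-p_{\Bell}(\xtilde_2)$ satisfies $[\ell]P=y_1-y_2$, so after rescaling, $\tilde{y_1-y_2}$ admits a lift $\xtilde_{12}=(\alpha_{12},n(i_1-i_2),n(j_1-j_2)).\zeroBl$ with $p_{\Bell}(\xtilde_{12})=P$. Lemma~\ref{lem@carac_trueltorsion} then turns the hypothesis that $\tilde{y_1},\tilde{y_2},\tilde{y_1-y_2}$ are ``true'' $\ell$-torsion points into $\alpha_1=\lambda_{i_1,j_1}\mu_1$, $\alpha_2=\lambda_{i_2,j_2}\mu_2$, $\alpha_{12}=\lambda_{i_1-i_2,j_1-j_2}\mu_{12}$, with each $\mu_{\bullet}$ an $\ell^{th}$-root of unity and $\lambda_{i,j}=e_c(i,-j)^{\ell'n(\ell-1)}$.

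Next, since $\ellisotilde$ is an isogeny compatible with the theta structures, it commutes with $\chaineadd$ (the last corollary of Section~\ref{subsec@theta_and_addition}), so $\tilde{y_1+y_2}=\ellisotilde\bigl(\chaineadd(\xtilde_1,\xtilde_2,\xtilde_{12})\bigr)$. To evaluate the inner term I would first use Corollary~\ref{cor@compaadd2} — valid for $(B_k,\bpol)$ with the same proof — applied to the three arguments $\zeroBl$, together with $\chaineadd(\zeroBl,\zeroBl,\zeroBl)=\zeroBl$ (immediate from Theorem~\ref{th@addition}), to obtain
\[ \chaineadd\bigl((1,ni_1,nj_1).\zeroBl,\;(1,ni_2,nj_2).\zeroBl,\;(1,n(i_1-i_2),n(j_1-j_2)).\zeroBl\bigr)=(1,n(i_1+i_2),n(j_1+j_2)).\zeroBl, \]
and then extract the scalars $\alpha_1,\alpha_2,\alpha_{12}$ using the homogeneity relation~\eqref{eq@homogen1} of Lemma~\ref{lem@eqhomogen}, which gives
\[ \chaineadd(\xtilde_1,\xtilde_2,\xtilde_{12})=\Bigl(\tfrac{\alpha_1^{2}\alpha_2^{2}}{\alpha_{12}},\;n(i_1+i_2),\;n(j_1+j_2)\Bigr).\zeroBl. \]
By Lemma~\ref{lem@carac_trueltorsion} once more, $\tilde{y_1+y_2}$ is a ``true'' $\ell$-torsion point if and only if $\alpha_1^{2}\alpha_2^{2}/\alpha_{12}$ agrees with $\lambda_{i_1+i_2,j_1+j_2}$ up to an $\ell^{th}$-root of unity.

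It then only remains to check the identity $\lambda_{i_1,j_1}^{2}\,\lambda_{i_2,j_2}^{2}=\lambda_{i_1+i_2,j_1+j_2}\,\lambda_{i_1-i_2,j_1-j_2}$. Substituting $\lambda_{i,j}=e_c(i,-j)^{\ell'n(\ell-1)}$ and expanding $e_c(i_1+i_2,-j_1-j_2)\,e_c(i_1-i_2,-j_1+j_2)$ by bilinearity of $e_c$, the cross terms cancel and what is left is $e_c(i_1,-j_1)^{2}e_c(i_2,-j_2)^{2}$; this parallelogram law is all that is needed, and it holds on the nose. Combining with the previous paragraph, $\alpha_1^{2}\alpha_2^{2}/\alpha_{12}=\lambda_{i_1+i_2,j_1+j_2}\cdot(\mu_1^{2}\mu_2^{2}/\mu_{12})$ with $\mu_1^{2}\mu_2^{2}/\mu_{12}$ an $\ell^{th}$-root of unity, so Lemma~\ref{lem@carac_trueltorsion} yields that $\tilde{y_1+y_2}$ is a ``true'' $\ell$-torsion point, as claimed.

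The main obstacle I anticipate is purely the bookkeeping in the first paragraph: that $B_k[\ell^2]$ sits inside the $\bpol$-coordinate system precisely as the classes $(ni,nj)$, that these ``torsion labels'' subtract correctly so one may choose $\xtilde_{12}$ with label $n(i_1-i_2),n(j_1-j_2)$, and that $\chaineadd$ of base-point translates is the base-point translate by the sum. All of this follows from Corollary~\ref{cor@compaadd2}, from the description of the fibres of $\ellisotilde$ (the commutative subgroup $H\subset\Hll$ of Section~\ref{subsec@evaluation}), and from Lemma~\ref{lem@carac_trueltorsion}; once it is set up, the rest is only the homogeneity of $\chaineadd$ and the elementary bilinear identity above.
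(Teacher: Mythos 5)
Your proof is correct and takes essentially the same route as the paper's: lift to the affine cone $\Bell$ via $\ellisotilde$, normalise the lifts using Lemma~\ref{lem@carac_trueltorsion} and Remark~\ref{rem@trueltorsion}, then combine Corollary~\ref{cor@compaadd2} with the homogeneity relations of Lemma~\ref{lem@eqhomogen} and the parallelogram identity for $\lambda_{i,j}$ to conclude. The only difference is presentational: you spell out the bilinear identity $\lambda_{i_1,j_1}^2\lambda_{i_2,j_2}^2=\lambda_{i_1+i_2,j_1+j_2}\lambda_{i_1-i_2,j_1-j_2}$, which the paper leaves implicit in its final displayed equality.
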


\begin{proof}
  Let $(\alpha_1, i_1, j_1) \in \Hll$,
      $(\alpha_2, i_2, j_2) \in \Hll$,
      $(\alpha_3, i_3, j_3) \in \Hll$,
  be such that 
  \[ \ellisotilde (\alpha_1,i_1,j_1). 0_{\Bell}   = \tilde{y_1},\quad 
  \ellisotilde (\alpha_2,i_2,j_2). 0_{\Bell}   = \tilde{y_2}, \quad  
  \ellisotilde (\alpha_3,i_3,j_3). 0_{\Bell}   = \tilde{y_1-y_2} \] 
  By the Remark at the end of Lemma~\ref{lem@carac_trueltorsion}, we may
  suppose that $i_3=i_1-i_2$, $j_3=j_1-j_2$. Since 
  $\tilde{y_1}, \tilde{y_2}$ and $\tilde{y_1-y_2}$ are ``true'' points of
  $\ell$-torsion, by Remark~\ref{rem@trueltorsion} and
  Lemma~\ref{lem@carac_trueltorsion} we may suppose that
  $\alpha_1=\lambda_{i_1,j_1}$, $\alpha_2=\lambda_{i_2,j_2}$ and $\alpha_3=\lambda_{i_1-i_2,j_1-j_2}$.

  By Corollary~\ref{cor@compaadd2} and Lemma~\ref{lem@eqhomogen}, we have
  \[\tilde{y_1+y_2}=\frac{\lambda_{i_1,j_1}^2 \lambda_{i_2,j_2}^2}{\lambda_{i_1-i_2, j_1-j_2}} (1,i_1+i_2,j_1+j_2).0_{\Bell} =
  (\lambda_{i_1+i_2,j_1+j_2},i_1+i_2,j_1+j_2). 0_{\Bell},\]
  so $\tilde{y_1+y_2}$ is
  indeed a ``true'' point of $\ell$-torsion by
  Lemma~\ref{lem@carac_trueltorsion}.
\end{proof}

\subsection{Improving the computation of a modular point}
\label{subsec@ComputingModular}

In \cite{0910.4668v1}, to compute the modular points $\zeroA$, the following
algorithm was used: write the Riemann relations of level $\ell n$ to
get a system in which we
plug the known coordinates $\zeroB$. We obtain a system $S$ of with a
finite number of solutions, that
can be solved using a Gr\"obner basis algorithm. But even for $g=2$ and
$\ell=3$, the system was too hard to be solved with a general Gr\"obner basis
algorithm, so we had to design a specific one.

In this section we explain how, using the ``Vélu's''-like formulas
of Section~\ref{subsec@velu}, it is possible to recover every modular
point $\zeroA$ solution of the system $S$ from the knowledge of the $\ell$-torsion of $B_k$. We then discuss different methods
to compute the $\ell$-torsion in $B_k$.

\begin{algorithm}[Computing all modular points]
  \label{algo@ltorsion} 
  \begin{description}
    \item[Input]
  $T_1, \cdots, T_{2g}$ a basis of the $\ell$-torsion of $B_k$.
    \item[Output] 
  All $\ell$-isogenies.
  \end{description}

  We only give an outline of the algorithm, since we give a detailed
  description in Example~\ref{ex@ltorsion}:
  Compute any affine ``true'' $\ell$-torsion lifts $\tilde{T}_1$, $\cdots$,
  $\tilde{T}_{2g}$, $\tilde{T_1+T_2}$, $\cdots$, $\tilde{T_{g-1}+T_g}$, and then
  use addition chains to compute affine lifts $\tilde{T}$ for every point $T \in  B_k[\ell]$. By Proposition~\ref{prop@add_trueltorsion} $\tilde{T}$ is a ``true'' point of $\ell$-torsion.

  For every isotropic subgroup $K \subset B_k[\ell]$, take the
  corresponding lifts and use them to reconstitute the corresponding theta
  null point $\zeroA$ (see Section~\ref{subsec@velu}).
\end{algorithm}

\begin{example}
  \label{ex@ltorsion}
  Suppose that $\{ T_1, \ldots, T_{2g} \} $ is a symplectic basis of
  $B_k[\ell]$. (A symplectic basis is easy to obtain from a basis of the
  $\ell$-torsion, we just need to compute the discrete logarithms of some
  of the pairings between the points, and we can use
  Algorithm~\ref{algo@compute_pairings} to compute these).

  Let $\Thetall$ be any theta structure of level $\ell^2 n$ on
  $B_k$ compatible with $\ThetaB$, and $\zeroB'$ be the corresponding
  theta null point (see Section~\ref{subsec@evaluation}). 
  We may suppose (see Section~\ref{subsec@velu}) that
  $\tilde{T_1}=\ellisotilde (1,(n,0,\cdots,0),0).\zeroB'$,
  $\tilde{T_2}=\ellisotilde (1,(0,n,\cdots,0),0).\zeroB'$,
  \ldots,
  $\tilde{T_{g+1}}=\ellisotilde (1,0,(n,0,\cdots,0)).\zeroB'$,
  $\tilde{T_{g+2}}=\ellisotilde (1,0,(0,n,\cdots,0)).\zeroB'$,
  \ldots,
  $\tilde{T_1+T_{g+2}}=\ellisotilde (1,(n,0,\cdots,0),(0,n,0,\cdots,0)).\zeroB'$,
  \ldots

  Then by Corollary~\ref{cor@compaadd2}, using Algorithm~\ref{algo@ltorsion}, we compute the following affine lifts of the $\ell$-torsion:
  \begin{equation}
    \{ \ellisotilde (1,in,jn).\zeroB': i,j \in \{0,1,\cdots,\ell-1\}^g
    \subset \Zstruct{\ell^2 n} \}.
    \label{eq@computedltorsion}
  \end{equation}

  Now if $K \subset B_k[\ell]$ is an isotropic group, in the reconstruction
  algorithm~\ref{algo@velu} we need to compute points of the form
  $\ellisotilde (1,in,jn).\zeroB'$ for $i,j \in \Zstruct{\ell^2 n}$.
  But we have 
  \begin{align*}
  \ellisotilde (1,in,jn).\zeroB' &= 
  \ellisotilde \zeta^{\ell \beta n \cdot (i - \ell \alpha) n} 
    (1, \ell \alpha n,  \ell \beta n).
  (1,(i-\ell \alpha)n,(j-\ell \beta) n). \zeroB' \\ 
  &= 
  \ellisotilde \zeta^{\ell \beta n \cdot (i - \ell \alpha) n} 
  (1,(i-\ell \alpha)n,(j-\ell \beta) n). \zeroB',
  \end{align*}
  where $\alpha,\beta \in \Zstruct{\ell^2 n}$, and $\zeta$ is a $(\ell^2
  n)^{th}$-root of unity. As a consequence, we can always go back to a point computed
  in~\eqref{eq@computedltorsion} up to an $\ell^{th}$-root of unity.

  We give a detailed example with $g=1$, $\ell=3$, $n=4$.
  Let $B_k$ be an elliptic curve, with a theta structure $\ThetaB$ of level $n$.
  Let $T_1$, $T_2$ be a basis of $B_k[\ell]$, and choose ``true'' affine lifts
  $\tilde{T_1}, \tilde{T_2}, \tilde{T_1+T_2}$. Let $\Thetall$
  be any theta
  structure of level $\ell^2 n$ compatible with $\ThetaB$, and $\zeroB'$ be
  the corresponding theta null point (see
  Section~\ref{subsec@theta_and_addition}). We take
  $\Thetall$  such that 
  $\tilde{T_1}=\ellisotilde (1,n,0).\zeroB'$,
  $\tilde{T_2}=\ellisotilde (1,0,n).\zeroB'$,
  and $\tilde{T_1+T_2}=\ellisotilde (1,n,n).\zeroB'$.

  We have seen in~\eqref{eq@computedltorsion} that in the Algorithm~\ref{algo@ltorsion} we
  compute the points:
  $\ellisotilde (1,in,jn).\zeroB'$ for $i,j \in {0,1,\cdots,\ell-1} \subset \Z/\ell^2
  n\Z$.

  Now let $T=\ellisotilde (1,n,2n).\zeroB'$, $K=<\pB(T)>$ is an isotropic subgroup of
  $B_k[\ell]$. Let $A_k=B_k/K$, choose a compatible theta structure $\ThetaA$
  on $A$, and let $\zeroA$ be the associated theta null point.
  
  As usual, we define $\Rtilde_i=\pitilde_i(\zeroA)$ if $i \in \Z/\ell Z \subset
  \Z/ \ell n \Z$, and we may suppose (Section~\ref{subsec@velu}) that 
  $\ThetaA$ is such that $R_1=T$. More explicitly, if $n=4$ we have
  $\zeroA=(a_0,a_1,a_2,a_3,a_4,a_5,a_6,a_7,a_8,a_9,a_{10}, a_{11})$, 
  $\pitilde(\left( x_i \right)_{i  \in \Z/12 \Z}) = (x_0,x_3,x_6,x_9)$ so
  that
  $\Rtilde_0=(a_0,a_3,a_6,a_9)=\zeroB$ (Remember that we always choose $\zeroA$
  such that $\pitilde(\zeroA)=\zeroB$), $\Rtilde_1=(a_4,a_7,a_{10},a_1)$ and
  $\Rtilde_2=(a_8,a_{11}, a_2, a_5)$. Now by Theorem~\ref{th@compressionpoints}
  we know that $\zeroA$ is entirely determined by $\Rtilde_1$ (and $\zeroB$), in
  fact we have: $\Rtilde_2=\chaineadd(R_1,R_1,\zeroB)$. By
  Corollary~\ref{cor@compaadd2}, we have 
  \[ \Rtilde_2=\ellisotilde (1,2n,4n).\zeroB'= \ellisotilde \zeta^{2n\cdot 3n}
  (1,0,3n).(1,2n,n).\zeroB'= \zeta^{2n \cdot 3n} \ellisotilde
  (1,2n,n).\zeroB', \]
  where $\zeta$ is a $(\ell^2 n)^{th}$-root of unity.

  This shows that in the reconstruction step, we have to multiply the point
  $\ellisotilde (1,2n,n).\zeroB'$ which we have already computed by the
  $\ell$-root of unity $\zeta^{2n \cdot \ell n}$.
\end{example}

\begin{complexity}
   To compute an affine lift $\tilde{T_i}$, we have to compute an
   $\ell^{th}$-root of unity (and do some addition chains but we can reuse the
   results for the next step). Once we have computed the $\ell
   (2\ell+1)$-root, we compute the whole (affine lifts of) $\ell$-torsion
   by using $O(\ell^{2g})$ addition chains. We can now compute the pairings 
   $e(T_i,T_j)$ with just one division since we have already computed the
   necessary addition chain (see Section~\ref{sec@pairings}). From these
   pairings we can compute a symplectic basis of $B_k[\ell]$. This
   requires
   to compute the discrete logarithm of the pairings and can be done in
   $O(\ell)$. Using this basis, we can enumerate every isotropic subgroup
   $K \subset B_k[\ell]$, and reconstruct the corresponding theta null
   point with $O(\ell^g)$ multiplications by an $\ell^{th}$-root of unity.
\end{complexity}

\paragraph{The case $(n,\ell>1)$}
In this case, the only difference is that we have to compute $B_k[\ell n]$
rather than $B_k[\ell]$, and when $T_i$ is a point of $\ell n$-torsion, we
compute an affine lift $\tilde{T_i}$ such that:
\begin{gather*}
  \chainemult(\ell'+1,\tilde{T_i})=- (1,\ell (n-1),0).
  \chainemult(\ell',\tilde{T_i}). \label{eq@trueltorsion2bis}
\end{gather*}

\paragraph{The case $n=2$:}
This works as in Section~\ref{subsec@velu}, once we have computed
the $\tilde{T_{e_1}}+\tilde{T_{e_i}}$, we have to take compatible additions
to compute the $\tilde{T_{e_i}} + \tilde{T_{e_j}}$.

\paragraph{Computing the points of $\ell$-torsion in $B_k$:}

The first method is to use the Riemann relations of level
$\delta=(n,n,n,\cdots, \ell n)$. Let $\phi: \Zn \to \Zdelta$ be the
canonical injection. Let $\mathcal{M}_{\delta}$ be the modular
space of theta null points of level~$\delta$, and $V_J$ the subvariety
defined by the ideal $J$ generated by $a_{\phi(i)}=b_i$ for $i \in \Zn$.

Then to every (non degenerate) point solution correspond an isogeny $\pi:
A_k \to B_k$. The kernel of the contragredient of $\pi$ is then of the form $\{0_{B_k}, T,
2T, \cdots, (\ell-1)T\}$ where $T$ is a primitive point of $\ell$-torsion.
We may recover $T$ as follows: if $i \in \Z/\ell \Z$, let $\pi_i=\pi \circ
s_1(0,0,\cdots,n i)$, then we have $\pi_i(\zeroA)=i \cdot T$.

By using this method for every point solution (even the degenerate ones),
we find all the points of $\ell$-torsion (the degenerate point solutions
giving points of $\ell$-torsion that are not primitive~\cite[Theorem 4]{0910.4668v1}).

When we look at the equations of $V_J$, we see that we can reformulate them
as follows. Let $R_1, \cdots, R_{\ell-1}$ be the $\ell-1$ projections of
the generic point of $\Btilde \times \Btilde \times \cdots \times
\Btilde$ and let $R_0=\zeroB$. Then the equations on $B_k$ comes from the
addition relations: for all $i,j \in \Zl$,
\[ R_{i+j}=\chaineadd(R_i,R_j,R_{i-j}). \]

We see that we can describe the system with less unknowns and equations by
looking only at equations of the form:
\[ R_{2i}=\chaineadd(R_i,R_i,R_{0}), \]
\[ R_{2i+1}=\chaineadd(R_{i+1},R_i,R_{0}). \]
This requires $n^g O(\log(\ell))$ variables, and each equations is of
degree~$4$. 

A second method is to work directly over the variety $B_k$. The Riemann
relations~(\ref{eq@addition}) allows us to compute the ideal of
$\ell$-division in $B_k$. Here we have $n^g$ unknown, but the equations are
of degree $\ell^{2g}$. Contrary to the first method, we recover projective
points of $\ell$-torsion, so we have to compute an $\ell^{th}$-root of unity to
compute a ``true'' affine lift of $\ell$-torsion. But this mean that the
degree of our system is $\ell^{2g}$ rather than $\ell^{2g+1}$, so a
generic Gr\"obner basis algorithm will finish faster. In genus~$2$, using a
Core~2 with 4~GB of RAM, this allows us to compute up to $\ell=13$.

In general we prefer to work over the Kummer surface (so with $n=2$), since
it cuts the degree of the system by two. In genus~$2$,
Gaudry and Schost~\cite{gaudryrecord}
have an algorithm to compute the $\ell$-torsion on the Kummer surface using
resultants rather than a general purpose Gr\"obner-based algorithm. The
points are given in Mumford coordinates, but we can use the results of
Wamelen~\cite{Wamelen98} to have them in theta coordinates. With this
algorithm, we can go up to $\ell=31$. This algorithm is in
$\tilde{O}(\ell^6)$ (where we use the notation $\tilde{O}$ to mean we
forget about the $\log$ factors). The computation of the ``true'' affine
points of $\ell$-torsion from Algorithm~\ref{algo@ltorsion} is in
$\tilde{O}(\ell^4)$, and each of the $O(\ell^3)$ isogeny requires
$O(\ell^2)$ multiplication by an $\ell^{th}$-root of unity. In total we see that
we can compute all $(\ell,\ell)$-isogenies in $\tilde{O}(\ell^6)$ in genus~$2$. 

Lastly, if we know the zeta function of $B_k$ (for instance if $B_k$ comes
from complex multiplication), we can recover a point of $\ell$-torsion by
taking a random point of $B_k$ and multiplying it by the required factor.

\paragraph{Isogenies graph:} 

~

Usually when we compute every isogenies, this is to build isogenies graph.
However, our Vélu's like algorithm from Section~\ref{subsec@velu} gives us
a theta null point $0_{A_k}$ of level $\ell n$ from a point of level $n$.
We can use the Modular correspondence from Section~\ref{subsec@modcorr} to
go back to a theta null point $0_{C_k}$ of level $n$, but the
corresponding isogeny $B_k \to C_k$ is a $\ell^2$ isogeny, so with our method
we can only draw $\ell^2$-isogenies graphs.

There is however one advantage of using the intermediate step $0_{A_k}$:
since it is a theta null point of level $\ell n$, we have all the
$\ell$-torsion on $A_k$. Let $\pi_2: A_k \to C_k$ be the corresponding
isogeny, $K_2:=\pi_2(A_k[\ell])$ gives us half the $\ell$-torsion of $C_k$.
(to get an explicit description of $K_2$, just apply $\Inv$ to the results
of Section~\ref{subsec@action}). 
Since $K_2$ is the kernel of the dual
isogeny $C_k \to A_k$, this allows us to build an isogeny graph of $\ell^2$
isogenies where the composition of two such isogenies give an
$\ell^4$-isogeny and not (for instance if $g=2$) a
$(1,\ell^2,\ell^2,\ell^4)$-isogeny (it suffices to consider the isotropic
subgroups of $C_k[\ell]$ that intersect $K_2$ trivially).
The knowledge of $K_2$ also allows us to speed up the
computation of $C_k[\ell]$: 
In the following section, we will give an algorithm to compute the extended
commutator pairing $e$ on $C_k[\ell]$. 
Let $(G_1, \cdots, G_g)$ be a basis of $K_2$, and consider the system of
degree $\ell^{g+1}$ given
by the ideal of $\ell$-torsion and the relations $e(G_i, \cdot)=1$ for $i
\in [2..g]$. Let $H_1$ be a point in this system different from $<G_1>$ (it
suffices to check that $e(G_1, H_1) \ne 1$. We can now construct the system
of degree $\ell^{g}$ given by the ideal of $\ell$-torsion and the
relations $e(G_i, \cdot)=1$ for $i \ne 2$ and $e(H_1, \cdot)=1$; and look
for a solution $H_2$ such that $e(G_2, H_2) \ne 1$. We see that we can
construct a basis $G_1, \cdots, G_g, H_1, \cdots, H_g of C_k[\ell]$ by
solving a system of degree $\ell^{g+1}$, then of degree $\ell^{g}$, \ldots,
then of degree $\ell^2$. This is faster than solving the ideal of
$\ell$-torsion which is a system of degree $\ell^{2g}$.


\section{Pairing computations}
\label{sec@pairings}
In this section, we explain how to use the addition chains introduced
in Section~\ref{subsec@pseudoadd} in order to
compute the commutator, Weil and Tate pairings on Abelian
varieties. We suppose here that $B_k$ is provided with a polarization
$\ppol$ which is the $n^{th}$ power of a principal polarization.
This allows us to make the link between the extended commutator pairing and
the Weil pairing in Section~\ref{subsec@commutator}. We then give an
algorithm to compute the extended commutator pairing in
Section~\ref{subsec@compute_commutator}.
\subsection{Weil pairing and commutator pairing}
\label{subsec@commutator}

We recall the definition of the extended commutator pairing from
Section~\ref{subsec@evaluation}: Let $\bpol = [\ell]^* \ppol$ on $B_k$. As
$\ppol$ is symmetric, we have that $\bpol \simeq \ppol^{\ell^2}$ and as a
consequence $K(\bpol)$, the kernel of $\bpol$ is isomorphic to
$K(\overline{\ell^2 n})$. The polarization $\bpol$ induces a commutator
pairing $e_{\bpol}$ (\cite{MR34:4269}) on $K(\bpol)$ and as $\bpol$
descends to $\ppol$ via the isogeny $[\ell]$, we know that $e_{\bpol}$ is
trivial on $B_k[\ell]$. For $x_1,x_2 \in B_k[\ell]$, let $x'_1, x'_2 \in
B_k[\ell^2]$ be such that $\ell.x'_i =x_i$ for $i=1,2$. The extended
commutator pairing is then
$e'_\ell(x_1,x_2)=e_{\bpol}(x'_1,x_2)=e_{\bpol}(x_1,x'_2)$, this is a well 
defined bilinear application $e'_\ell : B_k[\ell] \times B_k[\ell] \rightarrow
\overline{k}$. As $e_{\bpol}$ is a perfect pairing, for any $x'_1 \in
B_k[\ell^2]$ there exits $x'_2 \in B_k[\ell^2]$ such that $e_{\bpol}(x'_1,
x'_2)$ is a primitive $\ell^{2th}$ root of unity. As a consequence, for any
$x_1 \in B_k[\ell]$ there exists $x_2 \in B_k[\ell]$ such that $e'_\ell(x_1,
x_2)$ is a primitive $\ell^{th}$ root of unity and $e'_\ell$ is also a perfect
pairing. 

As the kernel of
$\ppol$ is $B_k[n]$, we have an isogeny $B_k \rightarrow \hat{B}_k$
with kernel $B_k[n]$ and by
composing this isogeny on the right side of $e'_\ell$, we obtain a
perfect pairing $e'_W : B_k[\ell] \times \hat{B}_k[\ell] \to
\mu_{\ell}$ where $\mu_\ell$ is the subgroup of $\ell^{th}$-roots of
unity of $\overline{k}$. 

We have
\glsadd{glo@eW}
\begin{proposition}
  The pairing $e'_W$ is the Weil pairing $e_W$.
\end{proposition}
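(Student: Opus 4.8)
The plan is to reduce the statement to Mumford's identification of the commutator pairing of a line bundle with the Weil pairing of its polarisation isogeny, and then to unwind the two ``extension'' steps built into $e'_W$ --- dividing by $\ell$ to pass from $e_{\bpol}$ to $e'_\ell$, and composing with $\phi_{\ppol}$ to pass from $e'_\ell$ to $e'_W$ --- comparing them with the analogous factorisation of the $\ell$-Weil pairing through multiplication by $\ell^2 n$.

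First I would recall the classical result of Mumford~\cite{MR34:4269}: for a nondegenerate symmetric line bundle $L$ on an abelian variety, the commutator pairing $e^L$ on $K(L)=\Ker\phi_L$ agrees, under the identification of $\Ker\phi_L$ with its Cartier dual induced by $\phi_L=\hat\phi_L$, with the Weil pairing $e^{\phi_L}$ attached to the isogeny $\phi_L$. Applying this to $L=\bpol=[\ell]^*\ppol$: since $\ppol$ is symmetric we have $\bpol\simeq\ppol^{\ell^2}$, hence $\phi_{\bpol}=[\ell^2]\circ\phi_{\ppol}$; and since $\ppol$ is the $n^{\text{th}}$ power of a principal polarisation, say $\ppol\simeq\lambda^n$ with $\phi_\lambda\colon B_k\iso\hat B_k$, we obtain $\phi_{\bpol}=[\ell^2 n]\circ\phi_\lambda$. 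Hence $K(\bpol)=\phi_\lambda^{-1}(\hat B_k[\ell^2 n])=B_k[\ell^2 n]$, and by the functoriality of the Weil pairing under composition of isogenies (using that $\phi_\lambda$ is an isomorphism), $e_{\bpol}=e^{\phi_{\bpol}}$ is the $\ell^2 n$-Weil pairing of $B_k$ transported through $\phi_\lambda$ (and through $\hat{\hat B_k}=B_k$ on the other variable).

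Next I would unwind the definitions and chase torsion. Fix $x_1\in B_k[\ell]$ and $\xi\in\hat B_k[\ell]$; choose $x_1'\in B_k[\ell^2]$ with $\ell x_1'=x_1$ and $y\in B_k[\ell]$ with $\phi_{\ppol}(y)=\xi$ (possible since $\ell$ is prime to $n$). By construction
\[ e'_W(x_1,\xi)=e'_\ell(x_1,y)=e_{\bpol}(x_1',y)=e^{\phi_{\bpol}}(x_1',y)=e^{[\ell^2 n]}(\phi_\lambda(x_1'),y). \]
From here the level compatibility of the Weil pairing, $e^{[\ell^2 n]}(P,Q)=e^{[\ell]}(\ell n\,P,Q)$ for $Q$ of order dividing $\ell$, combined with $\ell n\,x_1'=n x_1$, bilinearity, and $n\,\phi_\lambda(y)=[n]\phi_\lambda(y)=\phi_{\ppol}(y)=\xi$, reduces the right-hand side to the value of the $\ell$-Weil pairing of $B_k$ on $(x_1,\xi)$, which is $e_W(x_1,\xi)$. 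The same computation shows that $e'_W$ is well defined, i.e. that $e'_\ell(x_1,\cdot)$ annihilates $\Ker\phi_{\ppol}=B_k[n]$ (here one uses $\gcd(\ell^2,n)=1$), and the case $\gcd(\ell,n)>1$ is handled identically after replacing $B_k[\ell]$ by $B_k[\ell n]$ throughout.

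The step I expect to be the main obstacle is not any of these manipulations individually but making the chain of equalities hold \emph{exactly}, with no stray sign or $\ell$-power of unity. This forces one to check: that Mumford's identification $e^L=e^{\phi_L}$ is used with the sign convention under which ``the Weil pairing'' $e_W$ is normalised in~\cite{MR34:4269}; that passing through $\bpol\simeq\ppol^{\ell^2}$ rather than through the literal factorisation $\phi_{\bpol}=\widehat{[\ell]}\circ\phi_{\ppol}\circ[\ell]$ introduces no extra automorphism; and that the transport of $e^{\phi_{\bpol}}$ to a pairing on $B_k[\ell^2 n]\times B_k[\ell^2 n]$, which silently invokes the canonical comparison between the Weil pairings of $B_k$ and $\hat B_k$ and the antisymmetry of the Riemann form of $\lambda$, carries the orientation matching the definition of $e'_W$. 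Once these conventions are pinned down, everything above is routine bookkeeping with torsion subgroups.
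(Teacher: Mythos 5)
Your proof is correct in outline but takes a genuinely different route from the paper's. You invoke Mumford's identification of the commutator pairing $e^L$ with the Weil pairing of the polarisation isogeny $\phi_L$ as a known black box, then factor $\phi_{\bpol}=[\ell^2 n]\circ\phi_\lambda$ and push everything through functoriality and level compatibility of Weil pairings, with the bookkeeping reducing $e^{[\ell^2 n]}(\phi_\lambda(x_1'),y)$ to $e_W(x_1,\xi)$. The paper instead works directly from the definition of $e_W$ as the character of the descent action on $[\ell]^*\Lambda_y$: it rewrites that character as a commutative square, then substitutes $[\ell]^*\Lambda_y=\bpol\otimes(\tau^*_{y'}\bpol)^{-1}$ with $\ell y'=y_0$, $\phi_{\ppol}(y_0)=y$, and observes that the resulting square of maps $\psi_x,\psi_{y'}$ is literally the square whose failure to commute computes the theta-group commutator $e_{\bpol}(x,y')$. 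This yields $e'_W=e_W$ in one stroke, without ever invoking the identification $e^L=e^{\phi_L}$, nor compatibility of $e_W$ with composition or with the $[m]$-tower, nor the isomorphism $\hat{\hat B_k}\cong B_k$. Your route is shorter modulo citations, but the convention-tracking issues you flag at the end (sign in Mumford's identification, automorphism hidden in $\bpol\simeq\ppol^{\ell^2}$ versus $\phi_{\bpol}=\widehat{[\ell]}\circ\phi_{\ppol}\circ[\ell]$, orientation in transporting through $\phi_\lambda$) are genuine and are not resolved in your sketch; the paper's concrete diagram chase is exactly what makes those issues evaporate. If you want to finish your argument you should nail down each of those normalisations explicitly, or else adopt the paper's construction which builds the comparison in from the start.
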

\begin{proof}
 
  For $y \in \hat{B}_k[\ell]$, we denote by $\Lambda_y$ the degree-$0$
  line bundle on $B_k$ associated to $y$.  We first recall a possible
  definition of the Weil pairing $e_W$. Let $(x,y) \in B_k[\ell] \times
  \hat{B}_k[\ell]$. Let $\mathscr{O}_{B_k}$ be the structural sheaf of
  $B_k$, and as $y \in \hat{B}_k[\ell]$ there is an isomorphism
  $\psi'_y: [\ell]^* \Lambda_y \simeq \mathscr{O}_{B_k}$.  As a
  consequence, $\Lambda_y$ is obtained as the quotient of the trivial
  bundle $ B_k \times \Aff^1_k$ over $B_k$ by an action $g$ of
  $B_k[\ell]$ on $B_k \times \Aff^1_k$ given by $g_x(t, \alpha)=(t+x,
  \chi(x).\alpha)$ where $(t,\alpha) \in (B_k \times
  \Aff^1_k)(\overline{k})$, $x \in B_k[\ell]$ and $\chi$ is a
  character of $B_k[\ell]$. By definition~\cite{MR0282985}, we have
  $e_W(x,y)=\chi(x)$. 

  In order to give another formulation of this definition, we chose an
  isomorphism $\mathscr{O}_{B_k}(0) \simeq k$ from which we deduce via
  $\psi'_y$ (resp. $\tau^*_x \psi'_y$) an isomorphism $\psi_0:
  [\ell]^* \Lambda_y(0) \simeq k$ (resp. $\psi_1: \tau^*_x[\ell]^*
  \Lambda_y(0) \simeq k$). There exists a unique isomorphism $\psi_x :
  [\ell]^* \Lambda_y \rightarrow \tau^*_x [\ell]^* \Lambda_y$
  compatible on the $0$ fiber with $\psi_0$ and $\psi_1$, i.e. we have
  that $\psi_1 \circ \psi_x \circ \psi_0^{-1}$
  is the identity of $k$.
  Then, the following diagram commutes up to a multiplication by
  $e_W(x,y)$:

\begin{center}
\begin{tikzpicture}[auto, 
  text height=1.5ex, text depth=0.25ex, 
  myarrow/.style={->,shorten >=1pt}]
    \matrix (W) [row sep=1.5cm, column sep=1.5cm, matrix of math nodes]
    { [\ell]^*\Lambda_y &  \mathscr{O}_{B_k} \\
    \tau^*_x[\ell]^*\Lambda_y &  \tau^*_x \mathscr{O}_{B_k} \\} ;
    \draw[myarrow] (W-1-1) to node {$\psi'_y$} (W-1-2);
    \draw[myarrow] (W-2-1) to node {$\tau^*_x \psi'_y$} (W-2-2);
    \draw[myarrow] (W-1-1) to node {$\psi_x$} (W-2-1);
    \draw[double distance=3pt] (W-1-2) to node {$e_W(x,y)$} (W-2-2);
  \end{tikzpicture}\label{diag1}
\end{center}

The polarization $\ppol$ gives the natural isogeny
$\phi_{\ppol}$, defined on geometric points by \begin{align*}
  \phi_{\ppol}(\overline{k}): B_k(\overline{k}) & \to
  \hat{B}_k(\overline{k}) \\ y & \mapsto \Lambda_y = \ppol \otimes
  (\tau^*_y \ppol)^{-1}. \end{align*}
  As a
  consequence, for $y \in \hat{B}_k[\ell]$ there exists $y_0
  \in B_k(\overline{k})$ such that $\Lambda_{y}
  = \ppol\otimes (\tau^*_{y_0} \ppol)^{-1}$. Let $y' \in B_k[\ell^2]$ be
  such that $\ell.y'=y_0$.  As $[\ell]^*\ppol= \bpol$, we have $[\ell]^*
  \Lambda_{y}= [\ell]^*(\ppol\otimes (\tau^*_{y_0} \ppol)^{-1})=\bpol\otimes
  (\tau^*_{y'} \bpol)^{-1}$. We remark that the isomorphism $\psi'_y :
  [\ell]^* \Lambda_y = \bpol \otimes (\tau^*_{y'} \bpol)^{-1} \to
  \mathscr{O}_{B_k}$ gives by tensoring on the right by $\tau_{y'}^*
  \bpol$ an isomorphism $\psi_{y'} : \bpol \rightarrow \tau^*_{y'}
  \bpol$. Thus, the following diagram is commutative up to a
  multiplication by $e_W(x,y)$: 

\begin{center}
\begin{tikzpicture}[auto, 
  text height=1.5ex, text depth=0.25ex, 
  myarrow/.style={->,shorten >=1pt}]
    \matrix (W) [row sep=1.5cm, column sep=1.5cm, matrix of math nodes]
    { \bpol &  \tau^*_{y'} \bpol \\
    \tau^*_x \bpol &  \tau^*_{x+y'} \bpol \\} ;
    \draw[myarrow] (W-1-1) to node {$\psi_{y'}$} (W-1-2);
    \draw[myarrow] (W-2-1) to node {$\tau^*_x \psi_{y'}$} (W-2-2);
    \draw[myarrow] (W-1-1) to node {$\psi_x$} (W-2-1);
    \draw[myarrow] (W-1-2) to node {$\tau^*_{y'} \psi_x$} (W-2-2);
\end{tikzpicture}
\end{center}
But this is exactly the definition of $e'_W(x,y)$ thus we have
$e'_W(x,y)=e_W(x,y)$.

\end{proof}

\subsection{Commutator pairing and addition chains}
\label{subsec@compute_commutator}

In this paragraph, we explain how to compute the pairing $e'_\ell$ using
addition chains. From Section~\ref{subsec@commutator} this give a
different algorithm to compute the Weil pairing than the usual Miller
loop~\cite{journals/joc/Miller04}.
We chose a theta structure $\Thetall$ for $\bpol$
compatible with $\ThetaB$ (see Section~\ref{subsec@modcorr}) from which we deduce a decomposition $K(\bpol)=K_1(\bpol) \times
K_2(\bpol)$ of $K(\bpol)$ into isotropic subspaces for the commutator
pairing and a basis $(\theta_i)_{i \in \Zll}$ of $H^0(\bpol)$. We
recall that there is a natural action of $G(\bpol)$ on $H^0(\bpol)$
which can transported via $\Thetall$ to an action of $\Hll$ on
$H^0(\bpol)$.  Let $x,y \in B_k[\ell]$, and $x',y' \in B_k[\ell^2]$ be such
that $\ell.x'=y$ and $\ell.y'=y$. We put $x'=x'_1 + x'_2$ and $y'=y'_1
+ y'_2$ with $x'_i,y'_i
\in K_i(\bpol)$, for $i=1,2$.
Let $\kappa : G(\bpol) \to K(\bpol)$ be
the natural projection and let $(\alpha_1, \alpha_2), (\beta_1,
\beta_2) \in \Zll \times \dZll$ be such that
$\kappa(\Thetall(\alpha_i))=x'_i$ and $\kappa(\Thetall(\beta_i))=y'_i$. 
This mean that we have $x'=(1,\alpha_1,\alpha_2).0_{A_k}$ and 
$y'=(1,\beta_1,\beta_2).0_{A_k}$.

\begin{lemma}\label{lem@tech1}
Let $i\in \Zll$ and put
$$s(1)= \frac{( (1,\alpha_1,0).(1,\beta_1,0).\theta_i)(\tilde{0}_{B_k})}{( (1,\alpha_1,0)
.\theta_i)(\tilde{0}_{B_k})}. \frac{\theta_i(\tilde{0}_{B_k})}{( (1,\beta_1,0).\theta_i)(\tilde{0}_{B_k})}.$$
For all $k \in \N$, we have
\begin{equation}
s(k)=\frac{( (1,\alpha_1,0).(1,k.\beta_1,0).\theta_i)(\tilde{0}_{B_k})}{( (1,\alpha_1,0)
.\theta_i)(\tilde{0}_{B_k})}. \frac{\theta_i(\tilde{0}_{B_k})}{(
(1,k.\beta_1,0).\theta_i)(\tilde{0}_{B_k})}=s(1)^k.
\end{equation}
\end{lemma}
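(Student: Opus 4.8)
The plan is to prove $s(k)=s(1)^k$ by induction on $k$; equivalently, I will establish the multiplicativity $s(k_1+k_2)=s(k_1)\,s(k_2)$ together with $s(0)=1$. The value $s(0)=1$ is immediate, since $(1,0,0)$ is the identity of $\Hll$ and so acts trivially on $\theta_i$, and the case $k=1$ is nothing but the definition of $s(1)$.

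The first concrete step is to put $s(k)$ into closed form. In the Heisenberg group $\Hll$ one has $(1,k\beta_1,0)=(1,\beta_1,0)^k$, because the $\mathbb{G}_m$-factors occurring in these products involve the pairing only on the (vanishing) $\dZll$-components; and by the action formula~\eqref{eq@actiontheta}, applied to the canonical basis attached to $\Thetall$, an element of the form $(1,c,0)$ acts on that basis by the pure index shift $\theta_h\mapsto\theta_{h+c}$. Abbreviating $a_j:=\theta_j(\tilde{0}_{B_k})$ for the affine theta-null coordinates of $\Bell$ (here $\tilde{0}_{B_k}$ may be taken to be $\zeroBl$), this yields
\[
  s(k)=\frac{\theta_{i+\alpha_1+k\beta_1}(\tilde{0}_{B_k})\,\theta_i(\tilde{0}_{B_k})}{\theta_{i+\alpha_1}(\tilde{0}_{B_k})\,\theta_{i+k\beta_1}(\tilde{0}_{B_k})}=\frac{a_{i+\alpha_1+k\beta_1}\,a_i}{a_{i+\alpha_1}\,a_{i+k\beta_1}},
\]
which is manifestly independent of the chosen affine lift $\tilde{0}_{B_k}$. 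In this form the desired identity $s(k_1+k_2)=s(k_1)s(k_2)$ says precisely that the map $c\mapsto a_{i+\alpha_1+c}/a_{i+c}$ is a constant multiple of a character on the cyclic group $\langle\beta_1\rangle\subseteq\Zll$; once that is known, the induction closes in one line.

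The heart of the matter — and the step I expect to be the main obstacle — is this quasi-periodicity of the theta-null point of $\Bell$ along the direction $\beta_1$. I would exploit the special shape of the data: $\alpha_1$ and $\beta_1$ are the $K_1(\bpol)$-coordinates of the $\ell$-power-torsion points $x'_1$ and $y'_1$, hence (using $\ell$ prime to $n$) lie in $n\Zll$, while $\Bell=(B_k,[\ell]^*\ppol)$ is an $[\ell]$-pullback. By Proposition~\ref{prop@compaadd} the point $(1,k\beta_1,0).\zeroBl$ equals $\chainemult(k,(1,\beta_1,0).\zeroBl)$, so the Riemann relations of level $\ell^2 n$ — in the guise of the addition formulas~\eqref{eq@addition} on $\Bell$ — express its coordinates recursively in $k$; since $(a_{i+k\beta_1})_k$ and $(a_{i+\alpha_1+k\beta_1})_k$ are two coordinate-sequences of one and the same chain, the plan is to show from this recursion, the homogeneity relations of Lemma~\ref{lem@eqhomogen}, and the constraint $a_{\phi(j)}=\omega^{-1}b_j$ supplied by the isogeny theorem (Proposition~\ref{prop@iso}), that their ratio is forced to be a geometric progression, necessarily with common ratio $s(1)$. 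The delicate point is precisely this bookkeeping: isolating the linear part of the quadratic behaviour of the multiplication chain and checking that it is exactly the factor that survives in the ratio defining $s(k)$ (the quadratic parts cancelling). Granting that, the induction — and hence the lemma — follows at once.
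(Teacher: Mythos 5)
Your proposal correctly reduces the claim to a concrete coordinate identity, but then stops precisely at the step that carries all the mathematical content.

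The reduction is fine: since $(1,c,0).\theta_h=\theta_{h+c}$ by the action formula and the elements $(1,\alpha_1,0)$, $(1,k\beta_1,0)$ commute and multiply to $(1,\alpha_1+k\beta_1,0)$, one gets
\[
s(k)=\frac{a_{i+\alpha_1+k\beta_1}\,a_i}{a_{i+\alpha_1}\,a_{i+k\beta_1}},
\qquad a_j:=\theta_j(\zeroBl),
\]
and the lemma becomes the multiplicativity of $c\mapsto a_{i+\alpha_1+c}/a_{i+c}$ along $\langle\beta_1\rangle$. But you then declare that this \emph{should} follow from the Riemann relations, the homogeneity formulas of Lemma~\ref{lem@eqhomogen}, and the isogeny theorem by ``isolating the linear part of the quadratic behaviour of the multiplication chain,'' explicitly flagging this as the ``main obstacle'' and ``delicate point'' and proceeding under a ``granting that.'' That is not a proof, it is a restatement of the problem: the Riemann relations are quartic identities among \emph{sums} of theta values, and the desired identity is a pointwise multiplicative relation among \emph{individual} theta nulls which holds only because $\alpha_1$, $\beta_1$ encode $\ell^2$-torsion directions, not for a generic index pair. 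Nothing in the sketch isolates where this torsion hypothesis enters, so the proposed ``bookkeeping'' has no visible way to succeed.

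The paper avoids this entirely by a short conceptual argument: $f=((1,\beta_1,0).\theta_i)/\theta_i$ is a section of the degree-$0$ line bundle $\Lambda=\tau_{y'_1}^*\bpol\otimes\bpol^{-1}$, which is trivial since $y'_1\in K(\bpol)$ and which descends to $C_k=B_k/\langle x'_1\rangle$; a degree-$0$ bundle arising from such a quotient is the twist by a character $\chi_0$ of $\langle x'_1\rangle$, and $s(k)=f(k.x'_1)/f(0_{B_k})=\chi_0(k.x'_1)$ is then automatically multiplicative. This is exactly the structural input (the character of the finite subgroup attached to a trivialized degree-$0$ bundle) that your coordinate computation would eventually have to rediscover, and it is the one ingredient your plan does not supply. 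To repair the proposal you would need to produce, not presuppose, the character: either reproduce the line-bundle descent argument, or give an explicit identity on the theta nulls of $\Bell$ that visibly uses $\alpha_1,\beta_1\in n\Zll$ and not just the Riemann relations.
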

\begin{proof}
Consider the degree-$0$ line bundle
$\Lambda = \tau_{y'_1}^* \bpol \otimes \bpol^{-1}$. We remark that as
$y'_1 \in K(\bpol)$, $\Lambda$ is isomorphic to the trivial line
bundle on $B_k$. Let $K$ be the
subgroup of $K_1(\bpol)$ generated by $x'_1$ and let $C_k$ be the
quotient of $B_k$ by $K$. The line bundle $\Lambda$ descends to a line
bundle $\Lambda'$ over $C_k$. As $\Lambda'$ has degree $0$, it is the
quotient of $B_k \times \Aff^1_k$ by an action of the form
$g'_x(t,\alpha)=(t+x, \chi_0(x).\alpha)$, where $(t,\alpha) \in B_k
\times \Aff^1_x$, $x \in K$, and  $\chi_0$
is a character of $K$.

As $\theta_i \in H^0(\bpol)$, we remark that $f=((1,\beta_1,
0).\theta_i)/(\theta_i)$ is a section of $\Lambda$. Thus, we have
$s(k)=f(k. x'_1)/f(0_{B_k})=\chi_0(k)$ and $s(k)=s(1)^k$. 
\end{proof}
\begin{remark}
  We remark that in the preceding lemma, $\alpha_1$ and $\beta_1$ play
  the same role and as a consequence can be permuted.
\end{remark}
We keep the notation of the beginning of this paragraph to state the 
\begin{proposition}
  \label{prop@compute_pairings}
  We put:
  $$L = \frac{( (1,\ell.\alpha_1 + \beta_1, \ell.\alpha_2 +
  \beta_2)\theta_i)(\tilde{0}_{B_k})}{( (1, \beta_1,
  \beta_2)\theta_i)(\tilde{0}_{B_k})}.\frac{\theta_i(\tilde{0}_{B_k})}{((1, \ell.\alpha_1,
  \ell.\alpha_2)\theta_i)(\tilde{0}_{B_k})}.$$
  $$R = \frac{( (1,\alpha_1 + \ell.\beta_1, \alpha_2 +
  \ell.\beta_2)\theta_i)(\tilde{0}_{B_k})}{( (1, \alpha_1,
  \alpha_2)\theta_i)(\tilde{0}_{B_k})}.\frac{\theta_i(\tilde{0}_{B_k})}{((1, \ell.\beta_1,
  \ell.\beta_2)\theta_i)(\tilde{0}_{B_k})}.$$
We have :
\begin{equation}
  e'_\ell(x,y)=L^{-1}.R.
\end{equation}
\end{proposition}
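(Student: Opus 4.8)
The plan is to realize $L^{-1}R$ as a commutator in the theta group $G(\bpol)$ and to identify that commutator with $e'_\ell(x,y)$ using that $\bpol=[\ell]^*\ppol$ descends along $[\ell]$. Recall first the standard description of the commutator pairing: for $g,h\in G(\bpol)$ with $\kappa(g)=z$ and $\kappa(h)=w$, the element $ghg^{-1}h^{-1}$ equals the scalar $e_{\bpol}(z,w)$; being central it acts on $H^0(\bpol)$ by multiplication by $e_{\bpol}(z,w)$, and since the action of $G(\bpol)$ on $H^0(\bpol)$ is faithful this scalar can be read off by comparing the two orders of composition, i.e.\ by evaluating $(gh).\theta_i$ against $(hg).\theta_i$ at $\tilde 0_{B_k}$ (for a suitable $i$ so that the evaluations are nonzero; such an $i$ exists by a nonvanishing argument as in the proof of Lemma~\ref{lem@tech1}, and the final identity is independent of $i$). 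Recall second that, $\ppol$ being symmetric, $\bpol\simeq\ppol^{\ell^2}$ and $\bpol=[\ell]^*\ppol$ descends along $[\ell]$, so $e_{\bpol}$ is trivial on $B_k[\ell]$; as $x=\ell x'$, $y=\ell y'$ with $x',y'\in B_k[\ell^2]\subset K(\bpol)$, bilinearity then gives $e'_\ell(x,y)=e_{\bpol}(x',y)=e_{\bpol}(x,y')=e_{\bpol}(x',y')^{\ell}$, and it is the last of these that I would match with $L^{-1}R$.

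Concretely, I would expand $L$ and $R$ using the explicit action~\eqref{eq@actiontheta} of $\Hll$ on $H^0(\bpol)$ together with Mumford's multiplication law in $\Hll$. Writing $g_{x'}=\Thetall(1,\alpha_1,\alpha_2)$ and $g_{y'}=\Thetall(1,\beta_1,\beta_2)$ for the chosen lifts of $x',y'$, the elements $\Thetall(1,\ell\alpha_1,\ell\alpha_2)$ and $\Thetall(1,\ell\beta_1,\ell\beta_2)$ are lifts of $x$ and $y$, and the ``diagonal'' Heisenberg elements occurring in the numerators of $L$ and $R$ differ from the products of these lifts only by explicit scalar factors given by values of the canonical pairing (the Heisenberg $2$-cocycle). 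After collecting these, $L$ and $R$ become, respectively, a pairing factor times a quantity of the ``$2$-cocycle'' type $\frac{((g_zg_w).\theta_i)(\tilde 0_{B_k})\,\theta_i(\tilde 0_{B_k})}{(g_z.\theta_i)(\tilde 0_{B_k})\,(g_w.\theta_i)(\tilde 0_{B_k})}$ attached to $(z,w)=(x,y')$ and to $(z,w)=(x',y)$, for lifts $g_z,g_w$ of $z,w$. Lemma~\ref{lem@tech1}, applied to the $K_1(\bpol)$-components of $x'$ and $y'$, and the analogous statement for the $K_2(\bpol)$-components (the two maximal isotropic subspaces, and the two points, playing symmetric roles, as in the Remark following Lemma~\ref{lem@tech1}), control the effect of replacing $x'$ by $x=\ell x'$ and $y'$ by $y=\ell y'$: the relevant factors get raised to the $\ell$-th power compatibly between $L$ and $R$. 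Forming $L^{-1}R$, the purely symmetric contributions cancel, the remaining pairing factors combine with the antisymmetric part of the cocycle quantity, and one is left with $e_{\bpol}(x',y')^{\ell}$, which by the first step equals $e'_\ell(x,y)$.

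I expect the main obstacle to be precisely this last bookkeeping step. Several of the pairing factors naively collapse to $1$, so the nontriviality of $L^{-1}R$ is delicate: it rests on the interplay between the descent $\bpol=[\ell]^*\ppol$ (which makes $e_{\bpol}$ kill $B_k[\ell]$) and the fact that $x',y'$ are \emph{genuine} $\ell^2$-torsion lifts, normalized by the theta structure $\Thetall$; one has to track the scalar and cocycle factors carefully, and in particular keep the precise power of $\ell$ rather than letting it disappear. One should also check that the statement is insensitive to the choice of isogeny-compatible theta structure $\Thetall$ of level $\ell^2 n$, which follows because any two such differ by an action of type~\eqref{action1}--\eqref{action2}, under which both sides of the asserted identity are invariant.
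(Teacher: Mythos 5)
Your plan tracks the paper's proof at the level of ideas: factor the Heisenberg elements such as $(1,\ell\alpha_1+\beta_1,\ell\alpha_2+\beta_2)$ into a canonical‑pairing (cocycle) factor times a ``$(1,\cdot,0)$--type'' piece, so that $L$ and $R$ each split into a pairing factor times a quantity controlled by Lemma~\ref{lem@tech1}, then cancel the latter and read off $e'_\ell(x,y)$ from what remains. That is precisely the structure of the paper's argument. However, what you defer as ``the last bookkeeping step'' \emph{is} the proof, and you only assert its outcome rather than carry it out. The paper does it in a few lines: by the commutation rules in $\Hll$ one gets $L=e_c(\ell\alpha_1,-\beta_2)\,L'$ and $R=e_c(\ell\beta_1,-\alpha_2)\,R'$, where $L'$ and $R'$ are Lemma~\ref{lem@tech1}--type expressions built from $(1,\beta_1,0),(1,\ell\alpha_1,0)$ and $(1,\alpha_1,0),(1,\ell\beta_1,0)$ respectively; Lemma~\ref{lem@tech1} gives each as an $\ell$th power of an $s(1)$--quantity, these agree because $(1,\alpha_1,0)$ and $(1,\beta_1,0)$ commute (the Remark), so $L'=R'$, and $L^{-1}R$ is exactly the product of the two canonical‑pairing factors, i.e.\ $e'_\ell(x,y)$.

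Two smaller points. First, $L^{-1}R$ is \emph{not} literally a commutator $ghg^{-1}h^{-1}$ read off by comparing $(gh).\theta_i$ with $(hg).\theta_i$: $L$ and $R$ are ratios built from two \emph{different} pairs of Heisenberg elements, $\bigl((1,\ell\alpha_1,\ell\alpha_2),(1,\beta_1,\beta_2)\bigr)$ and $\bigl((1,\ell\beta_1,\ell\beta_2),(1,\alpha_1,\alpha_2)\bigr)$, not two orderings of one pair; the commutator pairing only emerges after the cocycle bookkeeping. Second, you do not need a $K_2(\bpol)$--analog of Lemma~\ref{lem@tech1}: the $\dZll$--components are entirely absorbed into the $e_c$ cocycle factors that the decomposition peels off, and only the $\Zll$--components feed into Lemma~\ref{lem@tech1}. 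The closing check of independence from the choice of $\Thetall$ is also unnecessary for the statement, which is a fixed identity for a given compatible $\Thetall$.
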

\begin{proof}
  In order to clarify the notations we denote by $e_c$ the canonical
  pairing between $\Zll$ and $\dZll$.
First, we compute $L$. We have: 
\begin{eqnarray*}
 (1,\ell.\alpha_1 + \beta_1, \ell.\alpha_2 +
  \beta_2)\theta_i & = & e_c(\ell.\alpha_1 + \beta_1,
  -\ell.\alpha_2-\beta_2)(1,\ell.\alpha+\beta_1,0)(1,0,\ell.\alpha_2
  +\beta_2)\theta_i \\
  & = & e_c(\ell.\alpha_1 + \beta_1,
  -\ell.\alpha_2-\beta_2)(1,\ell.\alpha_1+\beta_1,0)\theta_i. 
\end{eqnarray*}
In the same way, we have:
\begin{eqnarray*}
  (1,\beta_1,\beta_2)\theta_i=e_c(\beta_1,
  -\beta_2)(1,\beta_1,0)\theta_i, \\
  (1,\ell.\alpha_1, \ell.\alpha_2)\theta_i=e_c(\ell.\alpha_1,
  -\ell.\alpha_2)(1,\ell.\alpha_1,0)\theta_i.
\end{eqnarray*}
Taking the product, we obtain that
\begin{equation*}
  L = e_c(\ell.\alpha_1, -\beta_2).L',
\end{equation*}
with
\begin{equation*}
  L'=\frac{(
  (1,\beta_1,0).(1,\ell.\alpha_1,0)\theta_i)(\tilde{0}_{B_k})}{(1,\beta_1,0)\theta_i(\tilde{0}_{B_k})}\frac{\theta_i(\tilde{0}_{B_k})}{
  (1,\ell.\alpha_1,0)\theta_i(\tilde{0}_{B_k})}.
\end{equation*}
In the same manner, we have:
\begin{equation*}
  R=e_c(\ell.\beta_1, -\alpha_2).R',
 \end{equation*}
 with
 \begin{equation*}
  R'=\frac{(
  (1,\alpha_1,0).(1,\ell.\beta_1,0)\theta_i)(\tilde{0}_{B_k})}{(1,\alpha_1,0)\theta_i(\tilde{0}_{B_k})}\frac{\theta_i(\tilde{0}_{B_k})}{
  (1,\ell.\beta_1,0)\theta_i(\tilde{0}_{B_k})}.
\
 \end{equation*}
 Using lemma \ref{lem@tech1} and the fact that $(1,\alpha_1,0)$
 commutes with $(1,\beta_1,0)$ we get that $L'=R'$. Therefore,
\begin{equation*}
  L^{-1}.R
  =e_c(\ell.\alpha_1,\beta_2).e_c(\ell.\alpha_2,\beta_1)=e'_\ell(x,y).
\end{equation*}
\end{proof}

The preceding proposition gives us an algorithm to compute the pairing:
\begin{algorithm}[Pairing computation]
  \label{algo@compute_pairings}
  \begin{description}
    \item[Input] $P, Q \in B_k[\ell]$
    \item[Output] $e'_\ell(P, Q)$
  \end{description}

  \gdef\Pcolor{\tilde{P}}
  \gdef\Qcolor{\tilde{Q}}
  \gdef\PQcolor{\tilde{P+Q}}
  Let $P,Q \in B_k[\ell]$, and choose any affine lift 
  $\tilde{P}$,
  $\tilde{Q}$ and $\tilde{P+Q}$,
  we can compute the following via
  addition chains: 
\begin{center}
\begin{tikzpicture}[mydiag]
    \matrix (W) [mymatrix0, row sep=0.3cm, column sep=0.3cm]
    { \zeroB \& \tilde{P} \& 2\tilde{P} \& \ldots \& \ell \tilde{P}=\lambda_P^0 \zeroB   \\
    \tilde{Q} \& \tilde{P+Q} \& 2\tilde{P}+\tilde{Q} \& \ldots \& \ell \tilde{P}+\tilde{Q}= \lambda_P^1 \tilde{Q} \\
    2 \tilde{Q} \& \tilde{P}+2\tilde{Q} \&  \&   \&   \\
      \ldots \& \ldots \&  \&   \&   \\
      \ell \tilde{Q}=\lambda_Q^0 \zeroB \& \tilde{P}+\ell \tilde{Q}=\lambda_Q^1 P \&  \&   \&   \\
     } ;
  \end{tikzpicture}
\end{center}
Namely we compute:
\begin{align*}
\ell \tilde{P}:=\chainemult(\ell,\Ptilde) &\quad
\ell \tilde{Q}:=\chainemult(\ell,\Qtilde) \\
\ell \tilde{P}+\tilde{Q}:=\chainemultadd(\ell,\tilde{P+Q},\Ptilde,\Qtilde)
&\quad
\tilde{P}+\ell\tilde{Q}:=\chainemultadd(\ell,\tilde{P+Q},\Qtilde,\Ptilde).
\end{align*}
      
Then we have:
\begin{equation}
  \label{eq@pairing_computation}
  e'_\ell(P,Q)=\frac{\lambda_P^1 \lambda_Q^0}{\lambda_Q^1 \lambda_P^0}
\end{equation}
\end{algorithm}

\begin{proof}
  Assume that 
  $\Pcolor$,
  $\Qcolor$ and $\PQcolor$ are such that 
  $\Pcolor= \ellisotilde (1,\alpha_1,\beta_1) \zeroB'$,
  $\Qcolor= \ellisotilde (1,\alpha_2,\beta_2) \zeroB'$,
  and $\PQcolor= \ellisotilde (1,\alpha_1+\alpha_2,\beta_1+\beta_2) \zeroB'$.
  Then by Corollary~\ref{cor@compaadd2}, we find that 
  $\lambda_P^0= \frac{\theta_i(0)}{((1, \ell.\alpha_1,
  \ell.\alpha_2)\theta_i)(0)} = 1$ and that $\lambda_P^1= 
  \frac{( (1,\ell.\alpha_1 + \beta_1, \ell.\alpha_2 +
  \beta_2)\theta_i)(0)}{( (1, \beta_1, \beta_2)\theta_i)(0)}$, so that by
  Proposition~\ref{prop@compute_pairings}, we have:
  \[ e'_\ell(P,Q)=\frac{\lambda_P^1 \lambda_Q^0}{\lambda_Q^1 \lambda_P^0}\]

  Now by Lemma~\ref{lem@eqhomogen}, it is easy to see
  that~\eqref{eq@pairing_computation} is homogeneous and does not depend on
  the affine lifts $\Pcolor$, $\Qcolor$ and $\PQcolor$, which
  concludes the proof.
\end{proof}

\begin{complexity}
    By using a Montgomery ladder, we see that 
    we can compute $e'_\ell(P,Q)$ with four
    fast addition chains of length~$\ell$, hence we need
    $O(\log(\ell))$ additions. It should be noted that we can reuse a lot
    of computation between the addition chains $P, 2P, 4P,\ldots$ and
    $P+Q, 2P+Q, 4P+Q, \ldots$ since we always add the
    same point at the same time between the two chains. 
\end{complexity}
  
\paragraph{The case $n=2$}
Let $\pm P, \pm Q \in K_B$, then we have $e'_\ell(\pm P, \pm Q)=\{ e'_\ell(P,Q),
e'_\ell(P,Q)^{-1}\}$. Thus the pairing on the Kummer variety is a bilinear
pairing $K_B \times K_B \to k^{\ast, \pm}$ where 
$k^{\ast,\pm}=k^{\ast}/\{ x=1/x\}$. We represent a class $\overline{x} \in
k^{\ast, \pm}$ by $x+1/x \in k$, and we define the symmetric pairing
$e'_{s}(\pm P,\pm Q)=e'_\ell(P,Q)+e'_\ell(P,-Q)$.
We can use the addition relations to compute $P \pm Q$ and then use
Algorithm~\ref{algo@compute_pairings} to compute $e'_\ell(P,Q)$, $e'_\ell(P,-Q)$,
but since $e'_{s}$ is symmetric there should be a method to compute it
without solving the degree-$2$ system given by the addition relations. This
will be the object of a future article.


\section{Conclusion}

We have described an algorithm that give a modular point from an isotropic
kernel, and another one that can compute the isogeny associated to a
modular point. By combining these two algorithms, we can compute any
isogeny between abelian varieties. However, the level of the modular space
that we use depend on the degree of the isogeny. That means that a point in
this modular space $\Mln$ corresponds to an isogeny and to the choice of a
symplectic basis of $B_k[\ell]$. So in our case it is easier to compute
directly the points of $\ell$-torsion in $B_k$ than to compute directly the
modular points in $\Mln$ since the degree of this latter system is higher.
These remarks mean that we cannot use our algorithm to
speed up Schoof point counting algorithm, like in the genus-one case (see
for instance~\cite{MR1486831}). A solution would be to have an efficient
characterization of $\Mln$ modulo the action by the symplectic group of
order~$\ell$.

Still, we can go back to a modular point of level~$n$ by using the
modular correspondence introduced in~\cite{0910.4668v1}. This mean that we
can compute isogeny graphs if we restrict to $\ell^2$-isogenies. We have
also introduced a point compression algorithm, that allows to drastically
reduce the number of coordinates of a projective embedding of level $4
\ell$. This new representation can be useful when one has to work with such
a projective embedding, rather than the usual one of level~$4$ (for
instance if one need a quick access to the translation by a point of
$\ell$-torsion). 


\nocite{MR85h:14026}

\bibliographystyle{alpha}
\bibliography{common}

\end{document}